\newtheorem{thm}{Theorem}[section]
\newtheorem*{thm*}{Theorem}
\newtheorem{lem}[thm]{Lemma}
\newtheorem{prop}[thm]{Proposition}
\newtheorem{cor}[thm]{Corollary}
\newtheorem{ques}{Question}
\theoremstyle{definition}
\newtheorem{defi}[thm]{Definition}
\newtheorem*{rem}{Remark}
\newcommand{\Z}{\mathbb{Z}}
\newcommand{\crn}{\mathrm{cr}} 
\newcommand{\ngamma}[1]{\crn(\gamma_1, \dots, \gamma_{#1})}
\newcommand{\ngammai}[1]{\crn(\gamma_{i_1}, \dots, \gamma_{i_{#1}})}
\newcommand{\gammak}[1]{\gamma_1, \dots, \gamma_{#1}}
\newcommand{\gammai}[1]{\gamma_{i_1}, \dots, \gamma_{i_{#1}}}
\newcommand{\ntorus}[1]{\crn{\left( \left \lceil \textstyle \frac{#1}{2} \right \rceil;1 \right )}+\crn{\left( \left \lfloor \textstyle \frac{#1}{2} \right \rfloor;1 \right )}}
\newcommand{\ndelta}[1]{\crn(\delta_1, \dots, \delta_{#1})}
\newcommand{\slz}{\mathrm{SL}_2(\mathbb{Z})}
\newcommand{\sgnm}{\text{sgn}}
\definecolor{Redcurve}{RGB}{255,0,0}
\definecolor{Orangecurve}{RGB}{255,128,0}
\definecolor{Greencurve}{RGB}{0,178,0}
\definecolor{Bluecurve}{RGB}{34,0,204}
\definecolor{Purplecurve}{RGB}{166,0,255}
\title{Crossing Number of Curves on Surfaces}
\author{Jasmin Jörg}
\date{}
\begin{document}

\begin{abstract}
We consider systems of simple closed curves on surfaces and their total number of intersection points, their so-called crossing number. For a fixed number of curves, we aim to minimise the crossing number. We determine the minimal crossing number of up to 12 curves on a surface of genus 2 and prove that minimising systems are unique up to homeomorphisms of the surface and isotopies of curves. 
\end{abstract}

\maketitle

\tableofcontents


\section{Introduction}

In this article, the objects of study are systems of simple closed curves on surfaces and their total number of intersection points, their so-called crossing number. Optimising such systems of curves has long been of interest, usually taking the approach of maximising the number of curves under given conditions on the intersection numbers. It is, for example, well-known that in a closed surface of genus $g$, there are at most $3g-3$ pairwise disjoint and non-homotopic, simple closed curves, maximal systems being given by a decomposition of the surface into pairs of pants. It is also known that there are at most $2g+1$ curves intersecting pairwise exactly once \cite{Malestein_2013}. However, the maximal number of curves in so-called 1-systems, that is, systems of curves intersecting pairwise at most once, remains an open question for $g\geq 3$. The answer is easily given for a torus, where a maximal 1-system consists of three curves. For a surface of genus 2, the answer is given by Malestein, Rivin and Theran. They prove that maximal 1-systems contain 12 curves and that exactly two mapping class orbits of maximal 1-systems exist \cite{Malestein_2013}. For higher genus surfaces, the exact answer remains unknown; the best upper bound is of order $g^2 \log(g)$, given by Greene \cite{greene2018curves}. 

Besides being fascinating objects in their own right, 1-systems are of interest due to their connection to systoles in hyperbolic geometry: In a closed hyperbolic surface, any two systoles intersect at most once. Thus, systems of systoles naturally are 1-systems. The maximal number of systoles among hyperbolic surfaces of a given genus, the so-called kissing number, is therefore closely related to the study of 1-systems. For genus 2, it is known that the kissing number is 12, which is realised by a unique hyperbolic surface, the Bolza surface \cite{schmutz1994systoles}. The best known upper bound for the kissing number of surfaces of genus $g$ is of order $\frac{g^2}{\log(g)}$, given by Parlier \cite{Parlier_2013}.
 
In this article, we take the reverse approach; we fix the number of curves $k$ and aim to determine how many intersection points any $k$ pairwise non-homotopic curves necessarily create. That is, what is the minimal crossing number of $k$ curves in a surface of genus $g$?
Our interest is twofold: On the one hand, we determine the minimal crossing number of $k$ curves on a genus $g$ surface, denoted $\crn(k;g)$, 
for small $k$, focusing mainly, but not exclusively, on a surface of genus 2. 
On the other hand, we consider the properties of minimising systems and see that while they are unique for small systems and small genus, this seems not to be the case in general. 
Our main goal is to prove the following theorem.

\begin{thm} \label{thm:mainthm}
In a surface of genus 2, the minimal crossing numbers of up to 12 curves are:
    \begin{align*}
        \crn(4;2)&=1, & \crn(7;2)&=6, & \crn(10;2)&=21, \\
        \crn(5;2)&=2, & \crn(8;2)&=10, & \crn(11;2)&=28, \\
        \crn(6;2)&=4, & \crn(9;2)&=14, & \crn(12;2)&=36.
    \end{align*}
For $4\leq k \leq 12$, realisations of $\crn(k;2)$ are unique up to homeomorphisms of the surface and isotopies of curves. Further, for $k \leq 11$, any minimal system of $k$ curves may be obtained by adding a curve to a minimal system of $k-1$ curves. The unique minimal system of 12 curves cannot be obtained in this way.
\end{thm}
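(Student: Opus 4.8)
The plan is to prove the theorem in two layers: first establish lower bounds $\crn(k;2)\geq c_k$ for each $k$ from $4$ to $12$, then construct explicit systems realising each bound, and finally analyse uniqueness. For the lower bounds I would argue by induction on $k$, exploiting the following mechanism: if $\gamma_1,\dots,\gamma_k$ is a minimal system, then removing a curve $\gamma_i$ of maximal "local" intersection count leaves $k-1$ non-homotopic curves, so $\crn(\gamma_1,\dots,\gamma_k)\geq \crn(k-1;2) + \min_i \sum_{j\neq i} i(\gamma_i,\gamma_j)$; the crux is to bound from below the number of intersections the "most expensive" curve must contribute. This forces a case analysis on how a curve can sit relative to a subsurface filled by the others. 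For the jumps that are large (e.g. from $\crn(9;2)=14$ to $\crn(10;2)=21$, an increase of $7$), one expects a combinatorial/topological argument that $k$ pairwise non-homotopic curves cannot all have small pairwise intersection simultaneously — this should invoke the classification of $1$-systems on $\Sigma_2$ due to Malestein–Rivin–Theran cited above, since a system with few crossings is "close to" a $1$-system, and the fact that the maximal $1$-system has exactly $12$ curves is what makes $12$ the natural stopping point.

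For the upper bounds I would exhibit concrete curve systems, most naturally by taking the unique maximal $1$-system $\Lambda_{12}$ of $12$ curves on $\Sigma_2$ and, for $k<12$, deleting curves greedily to minimise the number of remaining crossings; one checks that the resulting count matches $c_k$. The hypothetical values are consistent with this: $\crn(12;2)=36$ would be $\binom{12}{2}/?$—no, rather it is the actual crossing number of $\Lambda_{12}$, and the decreasing sequence $36,28,21,14,\dots$ reflects removing curves that each kill a controlled number of crossings. I would tabulate, for the $1$-system $\Lambda_{12}$, how many other curves each curve meets, verify the arithmetic, and record which sub-configurations are obtainable by the stated "add one curve" operation.

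The uniqueness statement is the technically heaviest part and I expect it to be the main obstacle. The strategy is: from the lower-bound case analysis, each time we are at a minimal system the "expensive curve" removal is essentially forced, so a minimal $k$-system must be a minimal $(k-1)$-system plus a curve added in an essentially unique way, up to $\mathrm{MCG}(\Sigma_2)$ and isotopy. This gives both uniqueness and the "extendability" claim simultaneously for $k\leq 11$ by induction, with the base case $k=4$ (one crossing, four non-homotopic curves) handled by hand. The genuinely exceptional case is $k=12$: here one must show the unique minimal system is $\Lambda_{12}$ itself (or the other MCG-orbit of maximal $1$-systems, whichever has crossing number $36$), and crucially that it is \emph{not} obtainable by adding a curve to any minimal $11$-system — i.e. every minimal $11$-system, when one tries to add a $12$th non-homotopic curve, necessarily creates more than $36-28=8$ new crossings. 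Proving this non-extendability requires understanding all minimal $11$-systems (delivered by the induction) and checking, for each, that no curve can be added cheaply; I would reduce this to a finite check using the action of the (finite) stabiliser of the configuration and the classification of isotopy classes of arcs in the complementary subsurface. The main risk is that the forced-removal argument in the lower bound has several branches that each need separate, somewhat delicate topological bookkeeping; keeping that case analysis organised, rather than any single deep idea, is where the real work lies.
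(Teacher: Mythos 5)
Your two central mechanisms both fail concretely, so this is a gap rather than an alternative route. For the lower bounds, the ``forced removal'' inequality $\crn(\Gamma)=\crn(\Gamma_i)+\sum_{j\neq i}i(\gamma_i,\gamma_j)\geq \crn(k-1;2)+d_i$ can at best be combined with the fact that some $d_i$ is at least the average degree $2\crn(\Gamma)/k$, giving $\crn(k;2)\geq\frac{k}{k-2}\crn(k-1;2)$; for $k=10$ this yields $\lceil\frac{10}{8}\cdot 14\rceil=18$, not $21$, and no refinement of ``how expensive must the most expensive curve be'' can close this, because in the actual minimal $10$-system the average degree is only $4.2$. The paper instead proceeds structurally: a Tur\'an-type argument on the intersection graph forces three pairwise disjoint curves in any minimal system with $k\leq 11$, one of which must be \emph{separating}; cutting along it reduces everything to two punctured tori, where crossing numbers and uniqueness are computed exactly via primitive vectors in $\Z^2$, the formula $i((a,b),(c,d))=|ad-bc|$, and the $\slz$-action. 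That reduction to genus $1$ is the load-bearing idea for $8\leq k\leq 11$ and is absent from your proposal; your appeal to the $1$-system classification for the ``large jumps'' cannot substitute for it, since the relevant minimal systems are not close to $1$-systems at all.

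Your upper-bound construction is also numerically wrong. Minimal systems for $k\leq 11$ are \emph{not} subsystems of the maximal $1$-system $\Lambda_{12}$: in $\Lambda_{12}$ every curve meets exactly six others, so deleting one curve leaves $36-6=30$ crossings, whereas $\crn(11;2)=28$. The true minimal $11$-system consists of a separating curve disjoint from everything, plus two copies of the optimal torus configuration $\{(1,0),(0,1),(1,1),(-1,1),(2,1)\}$, which contains pairs intersecting two and three times; it is very far from a $1$-system. This is precisely why the paper's $k=12$ case is genuinely discontinuous (no separating curve, all intersection numbers at most one) and why the minimal $12$-system cannot be obtained by adding a curve to a minimal $11$-system. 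You correctly sensed that $k=12$ is exceptional and that Malestein--Rivin--Theran's classification settles its uniqueness, but the machinery you propose for $4\leq k\leq 11$ would not produce either the stated values or the uniqueness statement.
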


One of the curve systems we examine more closely is a system of 12 curves on a surface of genus 2 that arises from a description of a maximal 1-system given in \cite{Malestein_2013}. It is related to the system of systoles in the Bolza surface via a homeomorphism of the surface and isotopies of curves. One of our goals is to prove that it is minimal in terms of the crossing number of 12 curves. We further prove that any minimising system of 12 curves is a 1-system, which in turn implies that minimal systems of 12 curves are unique. 

The main part of this article is devoted to the proof of Theorem~\ref{thm:mainthm}, which is a collection of several results contained in Sections~\ref{section:counting} and \ref{section:genus2}: The minimal crossing numbers are given in Propositions~\ref{prop:smallkgeneralg}, \ref{prop:genus2_89}, \ref{prop:genus2_1011} and \ref{prop:genus2_12}, the uniqueness of minimal systems is given in Proposition~\ref{prop:uniqueness11} and Corollary~\ref{cor:uniqueness12}.
Along the way, we obtain some more general results; namely, we determine $\crn(k;g)$ for $g \geq 2$ and $k\leq 5g-3$ in Section~\ref{section:counting} as well as $\crn(k;1)$ for $k\leq 6$ in Section~\ref{section:genus1}. Realisations of minimal crossing numbers in higher genus surfaces are investigated in Section~\ref{section:higher genus}. We explore the order of growth of $\crn(k;g)$ in Section~\ref{section:order of growth}, where we give a lower bound of order $k^2$ and an upper bound of order $k^{2+\frac{1}{3g-3}}$.


\section{Crossing number of curves} \label{section:intersection number}

Let $\Sigma_g$ be a smooth, closed, connected and orientable surface of genus $g$. 
A \emph{simple closed curve} in $\Sigma_g$ is an embedded circle, that is, a closed curve with no self-intersections. Any simple closed curve is isotopic to a smooth simple closed curve, we may therefore assume all curves to be smooth. Further, any two curves that intersect are isotopic to curves that intersect transversely, we thus assume that all intersections are transverse. In particular, this means the number of intersection points of any two simple closed curves is finite.

For simple closed curves $\gamma$ and $\delta$, there is a notion of an intersection number: the \emph{geometric intersection number} of $\gamma$ and $\delta$, denoted $i(\gamma, \delta)$, is the minimal number of intersection points between curves $\tilde{\gamma}$ and $\tilde{\delta}$ homotopic to $\gamma$ and $\delta$:
\[i(\gamma, \delta) = \min \{\#(\tilde{\gamma} \cap \tilde{\delta}) \mid  \tilde{\gamma} \sim \gamma, \tilde{\delta} \sim \delta\},\]
where $\sim$ denotes homotopy between curves. When $i(\gamma, \delta) = \#(\gamma \cap \delta)$, we say that $\gamma$ and $\delta$ are in \emph{minimal position}. For details, see e.g.\ Chapter 1 in \cite{farb2012primer}.

For a collection of simple closed curves $\gamma_1, \dots, \gamma_k$ in $\Sigma_g$, we consider the \emph{crossing number} of $\gamma_1, \dots, \gamma_k$, defined as
\[\crn(\gamma_1, \dots, \gamma_k) = \sum_{1\leq i < j \leq k} i(\gamma_i, \gamma_j).\]
When convenient, we write $\crn(\Gamma)$ for $\ngamma{k}$ if $\Gamma=\{\gammak{k}\}$ is a finite set of curves.

We define $\crn(k;g)$ as the 
minimum of all crossing numbers $\ngamma{k}$, where $\gammak{k} \subset \Sigma_g$ is any collection of $k$ non-contractible, pairwise non-homotopic, simple closed curves. We call $\crn(k;g)$ the \emph{minimal crossing number} of $k$ curves in $\Sigma_g$. 
We refer to a system of curves $\gammak{k}$ with $\ngamma{k}=\crn(k;g)$ as a \emph{minimal system} or a \emph{realisation of $\crn(k;g)$}.

Throughout Chapters~\ref{section:counting}-\ref{section:outlook}, whenever we consider a system of curves, it is to be understood that these curves are non-contractible, pairwise non-homotopic, simple, closed, and with transverse intersection. Unless otherwise stated, we assume that all curves are in minimal position.


\section{Counting intersection points} \label{section:counting}

We start out with a few simple topological considerations, which let us determine $\crn(k;g)$ for $k\leq 5g-3$. After that, we establish a fundamental counting argument that will be applied numerous times throughout the following chapters. 

\begin{prop} \label{prop:smallkgeneralg}
    Let $g \geq 2$. Then 
     \begin{samepage}
    \begin{enumerate}[label=(\roman*)]
        \item $\crn(k;g) = 0$, for $k\leq 3g-3$, \label{smallk_item1}
        \item $\crn(3g-3+k;g) = k$, for $1 \leq k \leq g$, \label{smallk_item2}
        \item $\crn(4g-3+k;g) = g+2k$, for $1\leq k \leq g$. \label{smallk_item3}
    \end{enumerate}
     \end{samepage}
\end{prop}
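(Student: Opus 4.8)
\textit{Proof plan.}
The plan is to establish each minimal crossing number by a matching construction (upper bound) and a counting argument (lower bound), treating the three items in order. For \ref{smallk_item1}, the upper bound is immediate: a pants decomposition of $\Sigma_g$ is a family of $3g-3$ pairwise disjoint, pairwise non-homotopic essential simple closed curves, so any $k \le 3g-3$ of them form a system with crossing number $0$, and equality with $0$ is trivial. For the lower bound in \ref{smallk_item2} I induct on $k$: a system of $3g-2 > 3g-3$ curves cannot be pairwise disjoint, by the classical fact that $3g-3$ is the maximal size of such a collection (see \cite{farb2012primer}), so its crossing number is at least $1$; and given a system of $3g-3+(k+1)$ curves, it again has more than $3g-3$ curves, so some curve $\gamma$ meets the rest in $d \ge 1$ points --- deleting $\gamma$ leaves $3g-3+k$ curves, of crossing number $\ge k$ by induction, so the original crossing number is $\ge k+d \ge k+1$.

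Both upper bounds, in \ref{smallk_item2} and \ref{smallk_item3}, come from a single construction. Realise $\Sigma_g$ as the boundary of a genus-$g$ handlebody with handles $H_1, \dots, H_g$, and let $\gamma_i$ be the meridian of $H_i$; cutting along $\gamma_1, \dots, \gamma_g$ gives a $2g$-holed sphere, which I complete to a pants decomposition $\{\gamma_1, \dots, \gamma_{3g-3}\}$ of $\Sigma_g$ in which each $\gamma_i$ with $i > g$ is disjoint from every handle. Inside each handle $H_i$, a $\pi_1$-injective once-holed torus, I take a longitude $\delta_i$ with $i(\gamma_i, \delta_i) = 1$ and a ``slope one'' curve $\varepsilon_i$ with $i(\varepsilon_i, \gamma_i) = i(\varepsilon_i, \delta_i) = 1$, both supported in a small neighbourhood of $H_i$; these meet no curve associated to a different handle, and $\gamma_i, \delta_i, \varepsilon_i$ are pairwise non-homotopic as they have distinct slopes in $H_i$. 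Then for $1 \le k \le g$ the system $\{\gamma_1, \dots, \gamma_{3g-3}\} \cup \{\delta_1, \dots, \delta_k\}$ has crossing number $k$, and $\{\gamma_1, \dots, \gamma_{3g-3}\} \cup \{\delta_1, \dots, \delta_g\} \cup \{\varepsilon_1, \dots, \varepsilon_k\}$ has crossing number $g + 2k$.

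The lower bound in \ref{smallk_item3} rests on a structural sublemma: if $\Gamma$ is a system in which every non-isolated curve meets exactly one other curve, and meets it exactly once (call such $\Gamma$ \emph{economical}), then $|\Gamma| \le 4g-3$. Indeed, the non-isolated curves of an economical $\Gamma$ split into $c$ pairs $\{\alpha_l, \beta_l\}$ with $i(\alpha_l, \beta_l) = 1$ that are pairwise disjoint and disjoint from the $q$ isolated curves $\sigma_1, \dots, \sigma_q$; since $i(\alpha_l, \beta_l) = 1$, the classes $[\alpha_l], [\beta_l]$ are $2c$ linearly independent elements of $H_1(\Sigma_g; \Z) \cong \Z^{2g}$, whence $c \le g$, while the $c+q$ curves $\alpha_1, \dots, \alpha_c, \sigma_1, \dots, \sigma_q$ are pairwise disjoint, pairwise non-homotopic and essential, whence $c + q \le 3g-3$; so $|\Gamma| = 2c + q = c + (c+q) \le g + (3g-3) = 4g-3$. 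Granting this, the lower bound in \ref{smallk_item3} follows by iterated deletion: a system of $4g-3+k$ curves with $1 \le k \le g$ is not economical, so it contains a non-isolated curve with at least two intersection points; deleting it decreases the crossing number by at least $2$ and leaves $4g-4+k$ curves, which for $k \ge 2$ is still not economical, and so on --- after $k$ deletions one is left with $4g-3 = 3g-3+g$ curves, of crossing number $\ge g$ by \ref{smallk_item2}, so the original crossing number is $\ge g + 2k$.

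I expect the sublemma to be the main obstacle --- specifically, verifying carefully that an economical system really is a disjoint union of ``handle pairs'' and isolated curves, and that the homological bound $c \le g$ and the disjointness bound $c+q \le 3g-3$ combine as claimed; everything else is routine. A smaller point requiring care is that curves in minimal position inside a $\pi_1$-injective subsurface of $\Sigma_g$ remain in minimal position in $\Sigma_g$, which is what validates the intersection counts in the construction.
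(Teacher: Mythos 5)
Your proof is correct and follows essentially the same route as the paper: pants decompositions and handle-by-handle constructions ($\delta_i$ meeting a meridian once, $\varepsilon_i$ meeting meridian and longitude once each) for the upper bounds, and iterated curve deletion for the lower bounds, with the key input for \ref{smallk_item3} being that at most $g$ disjoint pairs of once-intersecting curves fit in $\Sigma_g$ --- a fact the paper asserts and you justify via linear independence in $H_1(\Sigma_g;\Z)$. Your ``economical system'' sublemma simply repackages the paper's strict-monotonicity-by-contradiction argument, combining the bounds $c\le g$ and $c+q\le 3g-3$ into the single count $|\Gamma|\le 4g-3$.
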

\begin{proof}
    \ref{smallk_item1}: This is an immediate consequence of the fact that a surface of genus $g$ may be decomposed into pairs of pants by $3g-3$ pairwise disjoint simple closed curves. \\
    \ref{smallk_item2}: Clearly $\crn(k;g)$ is increasing in $k$, we prove that for $k\geq 3g-3$, it is strictly increasing. Assume towards a contradiction that there exists ${k \geq 3g-3}$ such that $\crn(k+1;g)=\crn(k;g)$ and let $\Gamma = \{\gammak{k+1} \}$ be a minimal system of curves in $\Sigma_g$. For all $i \in \{1, \dots, k+1\}$, the set $\Gamma_i = \Gamma \setminus \{ \gamma_i \}$ is a system of $k$ curves, so $\crn(\Gamma_i) \geq \crn(k;g)$. We get
    \begin{align*}
        \sum_{j=1}^{k+1} i(\gamma_i, \gamma_j) &= \crn(\Gamma) - \crn(\Gamma_i)\\
        &\leq \crn(k+1;g) - \crn(k;g) \\
        &= 0,
    \end{align*}
    so $\sum_{j=1}^{k+1} i(\gamma_i, \gamma_j) =0$ for all $i \in \{1, \dots, k+1 \}$. But this means that the $k+1$ curves in $\Gamma$ are pairwise disjoint, contradicting $k+1 > 3g-3$. Thus, $\crn(k;g)$ is strictly increasing for $k\geq 3g-3$ and therefore $\crn(k;g) \geq k-(3g-3)$ for all $k\geq 3g-3$ or equivalently
    \begin{equation}
        \crn(3g-3+k;g) \geq k \label{eq:linearlower}
    \end{equation}
    for all $k \geq 0$. This establishes one inequality of \ref{smallk_item2}. For the other inequality, consider the following system of curves (see Figure~\ref{fg:3g-3+k}): Let $\alpha_1, \dots, \alpha_{2g-3}$ be pairwise disjoint curves such that $\Sigma_g \setminus (\alpha_1 \cup \dots \cup \alpha_{2g-3})$ consists of $g$ tori with one removed disc and $g-2$ pairs of pants. In each torus, choose two curves $\beta_i$ and $\gamma_i$ intersecting exactly once. For $1\leq k \leq g$, set 
    $\Delta_k = \{ \alpha_1, \dots, \alpha_{2g-3}, \beta_1, \dots, \beta_g, \gamma_1, \dots, \gamma_k\}$.
    We have:
    \begin{align*}
        \crn(3g-3+k;g) &\leq \crn(\Delta_k) \\
        &= k.
    \end{align*}
    \ref{smallk_item3}: Above, we have shown that $\crn(k;g)$ is strictly increasing in $k$ for ${k\geq 3g-3}$, which means that $\crn(k+1;g) \geq \crn(k;g)+1$. We prove that for $k\geq 4g-3$, we have $\crn(k+1;g) \geq \crn(k;g)+2$, which together with $\crn(4g-3;g)=g$ implies one inequality of \ref{smallk_item3}: $\crn(4g-3+k;g) \geq g+2k$. For this, assume towards a contradiction that $\crn(k+1;g) = \crn(k;g)+1$ for some $k\geq 4g-3$ and let $\Gamma = \{ \gammak{k+1} \}$ be a minimal system. For $i\in \{1, \dots, k+1\}$, consider $\Gamma_i = \Gamma \setminus \{ \gamma_i\}$, which is a system of $k$ curves and therefore satisfies $\crn(\Gamma_i) \geq \crn(k;g)$. It follows that
    \begin{align*}
        \sum_{j=1}^{k+1} i(\gamma_i, \gamma_j) &= \crn(\Gamma)-\crn(\Gamma_i) \\
        &\leq \crn(k+1;g) - \crn(k;g) \\
        &= 1,
    \end{align*}
    so each curve $\gamma_i$ intersects at most one other curve, and if it does intersect another curve, there is exactly one intersection point. This means that $\Gamma$ contains $\crn(k+1;g)$ disjoint pairs of curves intersecting exactly once, and all other intersection numbers are zero. In a genus $g$ surface, there are at most $g$ such pairs of curves, contradicting $\crn(k+1;g) > g$, which follows from (\ref{eq:linearlower}) since $k\geq 4g-3$. Thus, $\crn(k+1;g) \geq \crn(k;g)+2$ for $k\geq 4g-3$.
    
    For the other inequality, consider once more the system of curves in Figure~\ref{fg:3g-3+k}. For $1 \leq i \leq g$, let $\delta_i$ be the curve obtained from $\gamma_i$ by applying a Dehn twist about $\beta_i$. Set $\tilde{\Delta}_k = \{\alpha_1, \dots, \alpha_{2g-3}, \beta_1, \dots, \beta_g, \gamma_1, \dots, \gamma_g, \delta_1, \dots, \delta_k\}$. We have
    \begin{align*}
        \crn(4g-3+k;g) &\leq \crn(\tilde{\Delta}_k) \\
        &= g+2k. \qedhere
    \end{align*}
\end{proof}

\begin{figure}[ht] \centering 
        \includegraphics[width=0.7\textwidth]{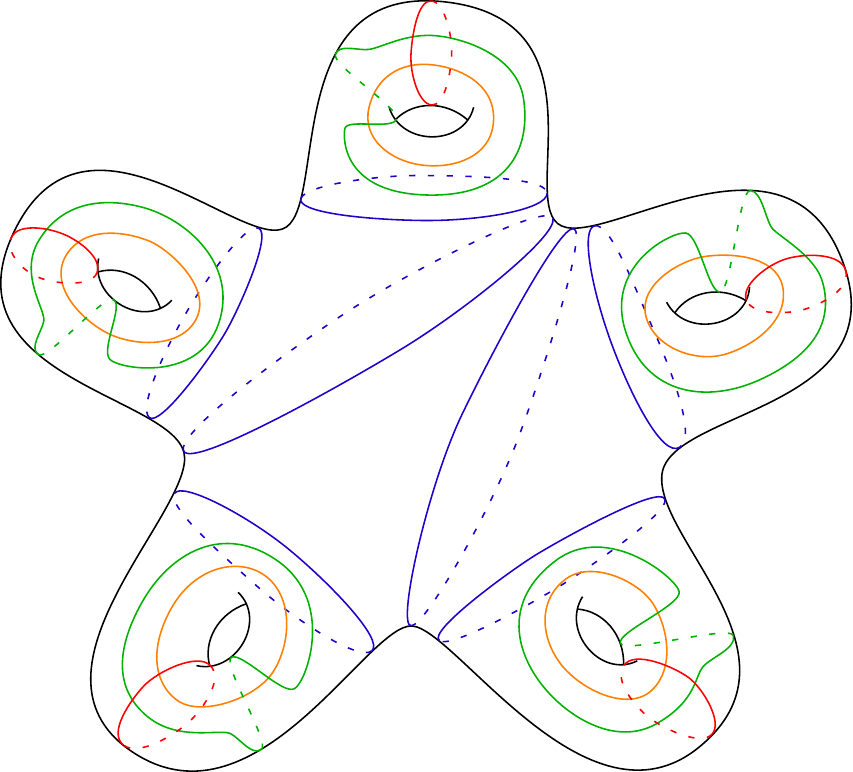}
        \caption{Curves \textcolor{Bluecurve}{$\alpha_1, \dots, \alpha_{2g-3}$,} \textcolor{Redcurve}{$\beta_1, \dots, \beta_g$,} \textcolor{Orangecurve}{$\gamma_1, \dots, \gamma_g$,} \textcolor{Greencurve}{$\delta_1, \dots, \delta_g$}}
        \label{fg:3g-3+k}
\end{figure}

\begin{lem} \label{lem:count}
    Let $g\geq 1$ and let $\gamma_1, \dots , \gamma_k \subset \Sigma_g$ be a system of curves satisfying for some $m \in \{2, \dots, k\}$ and $n\geq0$: $\crn(\gamma_{i_1}, \dots, \gamma_{i_m}) \geq n$ for any $1\leq i_1 < \dots < i_m \leq k$. Then
    \[\crn(\gamma_1, \dots, \gamma_k) \geq \left\lceil \frac{k(k-1)}{m(m-1)} n \right\rceil.\]
\end{lem}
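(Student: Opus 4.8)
The plan is to sum the hypothesis over all $\binom{k}{m}$ many $m$-element subsets of $\{1,\dots,k\}$ and count how often each pairwise intersection number $i(\gamma_a,\gamma_b)$ is counted. Concretely, for each $m$-subset $S=\{i_1<\dots<i_m\}$ we have $\crn(\gamma_{i_1},\dots,\gamma_{i_m}) = \sum_{\{a,b\}\subseteq S} i(\gamma_a,\gamma_b) \geq n$, so summing over all $S$ gives
\[
\sum_{\substack{S\subseteq\{1,\dots,k\}\\ |S|=m}} \ \sum_{\{a,b\}\subseteq S} i(\gamma_a,\gamma_b) \ \geq\ \binom{k}{m} n.
\]
The key combinatorial step is to swap the order of summation on the left: a fixed pair $\{a,b\}$ is counted once for each $m$-subset $S$ containing both $a$ and $b$, and the number of such subsets is $\binom{k-2}{m-2}$, independent of the pair. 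Hence the left side equals $\binom{k-2}{m-2}\sum_{\{a,b\}} i(\gamma_a,\gamma_b) = \binom{k-2}{m-2}\crn(\gamma_1,\dots,\gamma_k)$.

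Rearranging yields $\crn(\gamma_1,\dots,\gamma_k) \geq \frac{\binom{k}{m}}{\binom{k-2}{m-2}} n$, and the ratio of binomial coefficients simplifies: $\binom{k}{m}\big/\binom{k-2}{m-2} = \frac{k!/(m!(k-m)!)}{(k-2)!/((m-2)!(k-m)!)} = \frac{k(k-1)}{m(m-1)}$. Since $\crn(\gamma_1,\dots,\gamma_k)$ is a nonnegative integer, we may round up, giving $\crn(\gamma_1,\dots,\gamma_k) \geq \left\lceil \frac{k(k-1)}{m(m-1)} n \right\rceil$, as claimed. One should note the edge case $m=k$, where the statement is trivially the hypothesis itself (the ceiling of $n$ is $n$), and check that $m\geq 2$ ensures $\binom{k-2}{m-2}$ makes sense and is positive.

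There is no real obstacle here; the only thing to be careful about is the double-counting bookkeeping — making sure the multiplicity $\binom{k-2}{m-2}$ is correctly identified as the number of $m$-subsets containing a fixed pair, and that it genuinely does not depend on which pair is chosen (it does not, by symmetry). The rounding-up step is justified purely because crossing numbers are integers, so it costs nothing to invoke. I would present the argument in three short moves: (1) the summation inequality over $m$-subsets, (2) the interchange of summation and the count $\binom{k-2}{m-2}$, (3) the arithmetic simplification and the ceiling.
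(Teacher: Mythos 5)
Your proof is correct and follows exactly the same double-counting argument as the paper: sum the hypothesis over all $m$-subsets, observe each pairwise intersection number is counted $\binom{k-2}{m-2}$ times, simplify the ratio of binomial coefficients, and round up using integrality. No differences worth noting.
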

\begin{proof}
    By assumption, $\ngammai{m} \geq n$ for any $1\leq i_1 < \dots < i_m \leq k$. Summing over all possible $1\leq i_1 < \dots < i_m \leq k$, each pairwise intersection number is counted $\binom{k-2}{m-2}$ times, thus,
    \begin{align*}
         \binom{k-2}{m-2} \crn(\gamma_1, \dots, \gamma_k) &= \sum_{1\leq i_1 < \dots < i_m \leq k} \crn(\gamma_{i_1}, \dots, \gamma_{i_m})    \\
         &\geq \binom{k}{m} n,
    \end{align*}
    and therefore
    \[ \crn(\gamma_1, \dots, \gamma_k) \geq \frac{k(k-1)}{m(m-1)} n.\]
    Since $\crn(\gammak{k})$ takes integer values, we may round up to the next integer on the right-hand side of this inequality.
\end{proof}

\begin{cor} \label{cor:count}
    For $k \geq 2$ and $2 \leq m \leq k$:
    \[\crn(k;g) \geq \left \lceil \frac{k(k-1)}{m(m-1)} \crn(m;g) \right \rceil.\]
\end{cor}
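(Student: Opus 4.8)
The plan is to deduce this directly from Lemma~\ref{lem:count}, of which it is essentially a restatement. Fix $k \geq 2$ and $2 \leq m \leq k$, and let $\gammak{k} \subset \Sigma_g$ be a minimal system realising $\crn(k;g)$, so that $\ngamma{k} = \crn(k;g)$. The goal is to check that this system satisfies the hypothesis of Lemma~\ref{lem:count} with $n = \crn(m;g)$, and then read off the conclusion.

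The key (and only) step is the following observation: for any indices $1 \leq i_1 < \dots < i_m \leq k$, the sub-collection $\gammai{m}$ is again a system of curves in the sense fixed in Section~\ref{section:intersection number}, since non-contractibility, pairwise non-homotopy, simplicity and closedness are all inherited from the ambient system. Hence $\gammai{m}$ is one of the configurations over which the minimum defining $\crn(m;g)$ is taken, and therefore $\ngammai{m} \geq \crn(m;g)$. With this, Lemma~\ref{lem:count} applies with the given $m$ and $n = \crn(m;g)$, yielding
\[\crn(k;g) = \ngamma{k} \geq \left\lceil \frac{k(k-1)}{m(m-1)} \crn(m;g) \right\rceil,\]
which is the claimed bound.

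There is no genuine obstacle here; the only point that deserves a sentence is that restricting a system of curves to a sub-collection again gives a system of curves, which is immediate from the definitions. One could alternatively avoid choosing a minimiser at the outset by applying Lemma~\ref{lem:count} to an arbitrary system of $k$ curves and then taking the minimum over all such systems, but fixing a realisation of $\crn(k;g)$ first keeps the argument shortest.
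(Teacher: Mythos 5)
Your proof is correct and matches the paper's intent exactly: the corollary is stated without proof precisely because it follows from Lemma~\ref{lem:count} by observing that every $m$-element sub-collection of a system of $k$ curves is itself a system, hence has crossing number at least $\crn(m;g)$. Nothing further is needed.
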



\section{Genus 1} \label{section:genus1}

When considering minimal systems on a surface of genus 2 in Chapter~\ref{section:genus2}, subsystems of curves occur that are contained in a subsurface of genus 1. Therefore, we briefly restrict our attention to systems of curves on the torus $\Sigma_1$.
We determine $\crn(k;1)$ for $k\leq 6$ and prove that for $k\leq 5$, realisations of the minimal crossing number are unique up to homeomorphisms of $\Sigma_1$ and isotopies of curves. We start by recalling some facts about simple closed curves on the torus (Chapters 1 and 2 in \cite{farb2012primer}). 

There is a bijective correspondence between non-trivial isotopy classes of (oriented) simple closed curves on the torus and primitive vectors in $\mathbb{Z}^2$. For simplicity, we thus consider elements of $\Z^2$ rather than curves themselves. In a slightly imprecise use of language, we call elements $(a,b) \in \Z^2$ curves when we mean the (isotopy classes of) curves in $\Sigma_1$ corresponding to $(a,b)$. The geometric intersection number of two curves $(a,b)$ and $(c,d)$ is given by
\[i((a,b),(c,d)) = |ad-bc|.\]
Further, the mapping class group of the torus is isomorphic to $\text{SL}_2(\mathbb{Z})$, where elements $A \in \slz$ act on $\Z^2$ by left-multiplication. Note that we look at unoriented curves only, so $(a,b)$ and $(-a,-b)$ correspond to the same unoriented simple closed curve.
 
We consider the following curves:
\begin{align}\begin{split} \label{curves_torus}
    w_1 &= (1,0), \\
    w_2 &= (0,1), \\
    w_3 &= (1,1), \\
    w_4 &= (-1,1), \\
    w_5 &= (2,1), \\
    w_6 &= (1,2). 
    \end{split}
\end{align}
Computing the intersection numbers, we obtain the following upper bounds:
\begin{align} \begin{split} \label{upperbounds:torus}
    \crn(1; 1) &\leq \crn(w_1) = 0, \\
    \crn(2; 1) &\leq \crn(w_1, w_2) = 1, \\
    \crn(3; 1) &\leq \crn(w_1, w_2, w_3) = 3, \\
    \crn(4; 1) &\leq \crn(w_1, \dots, w_4) = 7, \\
    \crn(5; 1) &\leq \crn(w_1, \dots, w_5) = 14, \\
    \crn(6; 1) &\leq \crn(w_1, \dots, w_6) = 24.
    \end{split}
\end{align}
For $k\leq 6$, the curves $w_1, \dots, w_k$ minimise the crossing number of $k$ curves on the torus. As we see in the next lemma, this is immediate for $k\leq 4$. Further, these realisations are unique in the following sense.

\begin{lem} \label{lem:torus_leq4}
    For $k\leq 4$, the system $w_1, \dots, w_k$ given in (\ref{curves_torus}) is minimal, i.e.\ 
     $\crn(1;1)=0$, $\crn(2;1)=1$, $\crn(3;1)=3$, $\crn(4;1)=7$.
    Moreover, for any other minimal system $v_1, \dots, v_k \in \mathbb{Z}^2$, there exists $A\in \slz$ such that $\{Av_1, \dots, Av_k\} = \{w_1, \dots, w_k\}$.
\end{lem}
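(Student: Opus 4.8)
The plan is to work entirely through the dictionary recalled above between unoriented non-contractible simple closed curves on $\Sigma_1$ and primitive vectors in $\Z^2$ modulo sign, together with the formula $i((a,b),(c,d)) = |ad - bc|$, treating $k = 1,2,3,4$ in turn. The recurring tool is a normalisation step: if $v_1, v_2 \in \Z^2$ are primitive with $i(v_1, v_2) = 1$, then there is $A \in \slz$ with $Av_1 = (1,0)$ and $Av_2 = (0,1)$ as unoriented curves. This follows because $\slz$ acts transitively on primitive vectors (so some $A_0$ sends $v_1$ to $(1,0)$), because then $A_0 v_2 = (a, \pm 1)$ as $i((1,0), A_0 v_2) = 1$, and because replacing $v_2$ by $-v_2$ and composing with the shear $\left(\begin{smallmatrix} 1 & -a\\ 0 & 1\end{smallmatrix}\right)$ — which fixes $(1,0)$ — moves $(a,1)$ to $(0,1)$. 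For $k=1$ there is nothing to intersect, so $\crn(w_1) = 0 = \crn(1;1)$, and uniqueness is precisely transitivity of the $\slz$-action.

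For $k = 2$ and $k = 3$: distinct non-homotopic curves on $\Sigma_1$ satisfy $i \geq 1$ (the formula vanishes only when the two primitive vectors are parallel, i.e.\ equal as unoriented curves), so $\crn(k;1) \geq \binom{k}{2}$, which equals the upper bounds $1$ and $3$ from (\ref{upperbounds:torus}); hence $\crn(2;1) = 1$ and $\crn(3;1) = 3$. In a minimal such system every pairwise intersection number is $1$. Normalising $v_1, v_2$ to $(1,0),(0,1)$, for $k = 3$ the third curve $(a,b)$ satisfies $|b| = i((a,b),(1,0)) = 1$ and $|a| = i((a,b),(0,1)) = 1$, so up to sign $(a,b)$ is $(1,1) \sim w_3$ or $(1,-1) \sim w_4$. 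In the first case $\{Av_1, Av_2, Av_3\} = \{w_1, w_2, w_3\}$; in the second, applying $\left(\begin{smallmatrix} 0 & 1 \\ -1 & 0\end{smallmatrix}\right)$ sends $\{(1,0),(0,1),(1,-1)\}$ to $\{(0,1),(1,0),(1,1)\} = \{w_1,w_2,w_3\}$.

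For $k = 4$ the bound $\crn(4;1) \leq 7$ is (\ref{upperbounds:torus}). Since each of the six pairs contributes $i \geq 1$, we have $\crn(4;1) \geq 6$ — exactly what Corollary~\ref{cor:count} with $m = 3$ gives — so the task is to rule out equality. If a $4$-curve system had crossing number $6$, all six intersection numbers would equal $1$, making it a $1$-system; normalising a pair that meets once to $(1,0),(0,1)$, each of the other two curves $(a,b)$ would have $|a| = |b| = 1$, hence be $(1,1)$ or $(1,-1)$ up to sign, and being distinct they would have to be both, contradicting $i((1,1),(1,-1)) = 2$. So $\crn(4;1) = 7$. For uniqueness, a minimal $4$-curve system has six positive intersection numbers summing to $7$, so they are five $1$'s and a single $2$; relabel so that $i(v_3, v_4) = 2$ and all remaining pairs are $1$. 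Normalise $v_1, v_2$ (which meet once) to $(1,0),(0,1)$. Then $v_3$ and $v_4$ each meet both of these once, so each is $(1,1)$ or $(1,-1)$ up to sign; being distinct they are both, i.e.\ $\{Av_3, Av_4\} = \{(1,1),(1,-1)\} = \{w_3, w_4\}$, whence $\{Av_1,\dots,Av_4\} = \{w_1,\dots,w_4\}$.

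The argument is elementary throughout, as the author indicates. The one point that goes beyond pigeonhole is the $k=4$ lower bound: the counting corollary only reaches $6$, and passing to $7$ requires the additional (but classical and easy) fact that the torus admits no $1$-system of four curves. A second, minor subtlety in the uniqueness part is to run the normalisation off the pair of curves intersecting exactly once — equivalently, the two curves outside the double-intersection pair — which sidesteps any bookkeeping over how an $\slz$-symmetry of the triple $\{w_1,w_2,w_3\}$ might permute its members.
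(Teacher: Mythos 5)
Your proof is correct and follows the same overall strategy as the paper's: work in the $\Z^2$ model with $i((a,b),(c,d))=|ad-bc|$, get the lower bounds by counting, and establish uniqueness by normalising via the $\slz$-action. Two points where you diverge are worth noting, both to your credit. First, you actually prove the fact that the paper dismisses as ``a straightforward computation'' — that no four curves on the torus pairwise intersect exactly once — by normalising one pair to $(1,0),(0,1)$ and observing that the remaining curves are forced into $\{(1,1),(1,-1)\}$ up to sign, which intersect twice; this is exactly the missing computation and it simultaneously delivers $\crn(4;1)\geq 7$. Second, in the $k=4$ uniqueness step the paper builds on its $k=3$ normalisation (fixing $v_1,v_2,v_3$ to $(1,0),(0,1),(1,1)$) and then solves $|a|+|b|+|a-b|=4$ for $v_4$, landing on three candidates each needing its own corrective matrix; you instead normalise the two curves \emph{outside} the doubly-intersecting pair, which pins down the other two curves as $\{w_3,w_4\}$ immediately and with no case split. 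The outcome is the same, but your ordering of the normalisation is the more economical one.
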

\begin{proof}
    We first prove that the systems are minimal. Trivially, $\crn(1;1)=0$. On the torus, any two non-homotopic simple closed curves intersect at least once, so $\crn(2;1) \geq 1$ and $\crn(3;1)\geq 3$. Moreover, a straightforward computation shows that a system of curves intersecting pairwise exactly once consists of at most three curves, so in a system of four curves, one of the intersection numbers must be at least two, yielding $\crn(4;1) \geq 7$. 
    
    For uniqueness, let $v_i = (a_i, b_i) \in \mathbb{Z}^2$, $i=1,\dots,k$, be distinct homotopy classes of curves such that $\crn(v_1, \dots, v_k) = \crn(k;1)$.\\
    For $k=1$, uniqueness is an immediate consequence of the fact that $(a_1, b_1)$ is primitive. Indeed, $\slz$ acts transitively on primitive vectors in $\Z^2$. \\
    $k=2$: Since $|a_1 b_2 - a_2 b_1|=1$, one of 
    $\begin{pmatrix}
        b_2 & -a_2 \\
        -b_1 & a_1
    \end{pmatrix}$, 
    $\begin{pmatrix}
        b_1 & -a_1 \\
        -b_2 & a_2
    \end{pmatrix}$
    is an element of $\slz$. Both map $\{v_1, v_2\}$ to $\{w_1, w_2\}$.\\
    $k=3$: The images of $\{v_1, v_2, v_3\}$ under the maps from the case $k=2$, are $\{w_1, w_2, w_3\}$ and $\{w_1, w_2, w_4\}$. If the latter, apply 
    $\begin{pmatrix}
        1 & 1 \\
        0 & 1
    \end{pmatrix}$, which maps $\{w_1, w_2, w_4\}$ to $\{w_1, w_2, w_3\}$. \\
    $k=4$: We may assume that $i(v_3, v_4)=2$ and all other intersection numbers are 1. In particular, this means $\crn(v_1,v_2,v_3)=3$, so by the case $k=3$, after a homeomorphism (and possibly renaming $v_1, v_2, v_3$) we may assume that $v_1=(1,0)$, $v_2=(0,1)$, $v_3=(1,1)$, $v_4=(a,b)$. We get 
    \[4 = \sum_{i=1}^3 i(v_i, v_4) = |a|+|b|+|a-b|.\] 
    Therefore, $v_4 \in \{(-1,1), (2,1), (1,2)\}=\{w_4,w_5,w_6\}$. If $v_4 = (2,1)$, apply 
    $\begin{pmatrix}
        1 & -1\\
        0 & 1
    \end{pmatrix}$, 
    if $v_4=(1,2)$, apply 
    $\begin{pmatrix}
        1 & 0\\
        -1 & 1
    \end{pmatrix}$.
\end{proof}

In order to find the minimal crossing number of 5 or 6 curves, we first consider the possible intersection patterns of four curves. For this, we introduce the intersection graph.

\begin{defi}
    The \emph{intersection graph} of curves $v_1, \dots, v_k \subset \Sigma_1$ is the 
    weighted graph on vertices $\{1,\dots,k\}$, where for $1\leq i, j \leq k$, there is an edge of weight $i(v_i, v_j)$ between vertices $i$ and $j$.
\end{defi}
For simplicity, we leave edges of weight 1 unlabeled.

\begin{lem}\label{lem:torus_4curves}
    Let $v_1, v_2, v_3, v_4 \in \Z^2$ be any collection of curves. Then $\crn(v_1, \dots, v_4) \neq 8$. Moreover, if $\crn(v_1, \dots, v_4) =9$, then the intersection graph of $v_1, \dots, v_4$ is isomorphic to:
    \begin{center}
    \emph{
    \begin{tikzpicture}[main/.style = {draw, circle}] 
    \node[main] (1) {$1$}; 
    \node[main] (2) [right of=1] {$2$};
    \node[main] (3) [below of=1] {$3$}; 
    \node[main] (4) [right of=3] {$4$};
    \draw (1) -- (2);
    \draw (1) -- (3);
    \draw (1) -- (4);
    \draw (2) -- (3);
    \draw (2) -- node[midway, right, pos=0.5] {3} (4);
    \draw (3) -- node[midway, below, pos=0.5] {2} (4);
    \end{tikzpicture}
    }
    \end{center}
\end{lem}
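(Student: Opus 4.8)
The plan is to enumerate all multisets of the six pairwise intersection numbers $i(v_p,v_q)$, $1\le p<q\le 4$, that are compatible with a prescribed total, and in each surviving case to read off the intersection graph. Since any two distinct isotopy classes of simple closed curves on the torus meet, each of the six numbers is at least $1$, so $\crn(v_1,\dots,v_4)\ge 6$; if the total is $8$ the multiset is $\{3,1,1,1,1,1\}$ or $\{2,2,1,1,1,1\}$, and if the total is $9$ it is $\{4,1,1,1,1,1\}$, $\{3,2,1,1,1,1\}$, or $\{2,2,2,1,1,1\}$. I would split each multiset further according to how its heavy edges sit inside $K_4$: for $\{2,2,1,1,1,1\}$ and $\{3,2,1,1,1,1\}$ the two heavy edges either share a vertex or form a perfect matching, while for $\{2,2,2,1,1,1\}$ the three weight-$1$ edges span one of $K_3\sqcup K_1$, $P_4$, or $K_{1,3}$.

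The main tool is a normalisation: whenever three of the four curves pairwise intersect exactly once they form a system of crossing number $3=\crn(3;1)$, so by the uniqueness part of Lemma~\ref{lem:torus_leq4} we may apply an element of $\slz$ --- and relabel the curves, which is harmless since we only claim the intersection graph up to isomorphism --- so that these three curves are $(1,0)$, $(0,1)$, and $(1,1)$. Writing the fourth curve as $(a,b)$, its intersection numbers with the first three are $|b|$, $|a|$, and $|a-b|$, so everything reduces to deciding which multisets $\{|a|,|b|,|a-b|\}$ arise. A short check --- using $|a-b|\le|a|+|b|$, together with $|a-b|=\bigl||a|-|b|\bigr|$ when $a$ and $b$ have the same sign --- shows that $\{1,1,3\}$, $\{1,1,4\}$, $\{1,2,2\}$, and $\{2,2,2\}$ never occur, whereas $\{1,2,3\}$ occurs for exactly the vectors $(3,1)$, $(1,3)$, $(3,2)$, $(2,3)$, $(1,-2)$, $(2,-1)$ up to a global sign. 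Now three of the four curves form a weight-$1$ triangle precisely in the following configurations, and in each the fourth curve meets that triangle with the indicated multiset: $\{3,1^5\}$ (multiset $\{1,1,3\}$); $\{4,1^5\}$ (multiset $\{1,1,4\}$); the case of $\{2,2,1,1,1,1\}$ in which the two weight-$2$ edges share a vertex (multiset $\{1,2,2\}$); the $K_3\sqcup K_1$ case of $\{2,2,2,1,1,1\}$ (multiset $\{2,2,2\}$); and the case of $\{3,2,1,1,1,1\}$ in which the weight-$3$ and weight-$2$ edges share a vertex (multiset $\{1,2,3\}$). By the check above the first four are impossible. In the last, the graph is a weight-$1$ triangle together with a fourth vertex joined to its three vertices by edges of weights $1$, $2$, $3$; this determines the graph up to isomorphism and it is the one pictured in the statement.

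What remains are the configurations whose weight-$1$ edges contain no triangle: the $P_4$ and $K_{1,3}$ cases of $\{2,2,2,1,1,1\}$, and the cases of $\{2,2,1,1,1,1\}$ and $\{3,2,1,1,1,1\}$ in which the heavy edges form a perfect matching, so that the weight-$1$ edges span a $4$-cycle. Here, lacking a triangle, I would normalise a single weight-$1$ edge, say $v_1=(1,0)$ and $v_2=(0,1)$, so that every remaining curve $v_j=(a,b)$ satisfies $i(v_j,v_1)=|b|$ and $i(v_j,v_2)=|a|$; the prescribed weights then confine each remaining curve to a short explicit list such as $(\pm 2,1)$, $(1,\pm 2)$, or $(1,\pm 3)$, and the final required intersection number is an expression $|x\pm y|$ in small integers that one checks can never equal its target. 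Combining the cases gives the statement: total $8$ is impossible, and total $9$ forces the multiset $\{3,2,1,1,1,1\}$ with the weight-$3$ and weight-$2$ edges meeting at a vertex --- the asserted graph. I expect the only mildly delicate point to be the bookkeeping in these triangle-free cases, in particular verifying that the short candidate lists are exhaustive once the normalisation freedom is used up; everything else is a routine computation with $2\times 2$ determinants.
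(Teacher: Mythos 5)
Your proposal is correct and takes essentially the same route as the paper's proof: enumerate the possible weighted intersection graphs for total $8$ or $9$, normalise a subsystem by an element of $\slz$ via Lemma~\ref{lem:torus_leq4}, and eliminate graphs by a determinant computation on primitive vectors. The only difference is organisational --- the paper always normalises a single weight-one edge and tests every graph against the identity $i(v_3,v_4)=\lvert i(v_2,v_3)i(v_1,v_4)\pm i(v_1,v_3)i(v_2,v_4)\rvert$ of \eqref{eq:intersectiongraph}, whereas you normalise a weight-one triangle to $(1,0),(0,1),(1,1)$ whenever one exists (reducing those cases to the clean observation that in $\{\lvert a\rvert,\lvert b\rvert,\lvert a-b\rvert\}$ the largest entry equals the sum of the other two) and fall back on the edge normalisation only for the triangle-free configurations; all of your case computations check out.
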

\begin{proof} 
    Suppose $\crn(v_1,\dots, v_4)=8$. We consider all possible intersection graphs of $v_1, \dots, v_4$. Up to isomorphism (which corresponds to renaming $v_1, \dots, v_4$), the intersection graph is one of the following:
    \begin{center}
    \begin{tikzpicture}[main/.style = {draw, circle}] 
    \node[main] (1) {$1$}; 
    \node[main] (2) [right of=1] {$2$};
    \node[main] (3) [below of=1] {$3$}; 
    \node[main] (4) [right of=3] {$4$};
    \node[main] (5) [right of=2] {$1$};
    \node[main] (6) [right of=5] {$2$};
    \node[main] (7) [below of=5] {$3$};
    \node[main] (8) [right of=7] {$4$};
    \node[main] (9) [right of=6] {$1$};
    \node[main] (10) [right of=9] {$2$};
    \node[main] (11) [below of=9] {$3$};
    \node[main] (12) [right of=11] {$4$};
    \draw (1) -- (2);
    \draw (1) -- (4);
    \draw (2) -- (3);
    \draw (2) -- (4);
    \draw (5) -- (6);
    \draw (5) -- (8);
    \draw (6) -- (7);
    \draw (7) -- (8);
    \draw (9) -- (10);
    \draw (9) -- (12);
    \draw (10) -- (11);
    \draw (10) -- (12);
    \draw (11) -- (12);
    \draw (1) -- node[midway, left, pos=0.5] {2} (3);
    \draw (3) -- node[midway, below, pos=0.5] {2} (4);
    \draw (5) -- node[midway, left, pos=0.5] {2} (7);
    \draw (6) -- node[midway, right, pos=0.5] {2} (8);
    \draw (9) -- node[midway, left, pos=0.5] {3} (11);
    \end{tikzpicture}
    \end{center}
    In particular, $i(v_1, v_2)=1$. By Lemma \ref{lem:torus_leq4}, after a homeomorphism we may assume that $v_1=(1,0)$, $v_2=(0,1)$, $v_3=(a,b)$, $v_4=(c,d)$. We may further assume that $b,d >0$. We have: $i(v_1, v_3) = b$, $i(v_2, v_3) = |a|$, $i(v_1, v_4) = d$, $i(v_2, v_4) = |c|$. Hence,
    \begin{align}
        i(v_3, v_4) &= |ad-bc| \nonumber\\
        &= \begin{cases}
            |i(v_2,v_3)i(v_1,v_4) - i(v_1,v_3)i(v_2,v_4)| &\text{ if $\sgnm(a)=\sgnm(c)$,}\\
            |i(v_2,v_3)i(v_1,v_4) + i(v_1,v_3)i(v_2,v_4)| &\text{ if $\sgnm(a)\neq \sgnm(c)$.}
        \end{cases} \label{eq:intersectiongraph}
    \end{align}
    Looking at the intersection graphs above, we see that none of them satisfies either of these equations, contradicting the assumption $\crn(v_1, \dots, v_4)=~8$.

    Now assume that $\crn(v_1, \dots, v_4)=9$. Up to isomorphism, the possible intersection graphs are the following:
    \begin{center}
    \begin{tikzpicture}[main/.style = {draw, circle}] 
    \node[main] (1) {$1$}; 
    \node[main] (2) [right of=1] {$2$};
    \node[main] (3) [below of=1] {$3$}; 
    \node[main] (4) [right of=3] {$4$};
    \node[main] (5) [right of=2] {$1$};
    \node[main] (6) [right of=5] {$2$};
    \node[main] (7) [below of=5] {$3$};
    \node[main] (8) [right of=7] {$4$};
    \node[main] (9) [right of=6] {$1$};
    \node[main] (10) [right of=9] {$2$};
    \node[main] (11) [below of=9] {$3$};
    \node[main] (12) [right of=11] {$4$};
    \node[main] (13) [below of=3] {$1$}; 
    \node[main] (14) [right of=13] {$2$};
    \node[main] (15) [below of=13] {$3$}; 
    \node[main] (16) [right of=15] {$4$};
    \node[main] (17) [right of=14] {$1$};
    \node[main] (18) [right of=17] {$2$};
    \node[main] (19) [below of=17] {$3$};
    \node[main] (20) [right of=19] {$4$};
    \node[main] (21) [right of=18] {$1$};
    \node[main] (22) [right of=21] {$2$};
    \node[main] (23) [below of=21] {$3$};
    \node[main] (24) [right of=23] {$4$};
    \draw (1) -- (2);
    \draw (1) -- node[midway, left, pos=0.5] {2} (3);
    \draw (1) -- node[midway, below, pos=0.65] {2} (4);
    \draw (2) -- (3);
    \draw (2) -- (4);
    \draw (3) -- node[midway, below, pos=0.5] {2} (4);
    \draw (5) -- (6);
    \draw (5) -- node[midway, left, pos=0.5] {2} (7);
    \draw (5) -- (8);
    \draw (6) -- node[midway, below, pos=0.65] {2} (7);
    \draw (6) -- (8);
    \draw (7) -- node[midway, below, pos=0.5] {2} (8);
    \draw (9) -- (10);
    \draw (9) -- node[midway, left, pos=0.5] {2} (11);
    \draw (9) -- (12);
    \draw (10) -- (11);
    \draw (10) -- node[midway, right, pos=0.5] {2} (12);
    \draw (11) -- node[midway, below, pos=0.5] {2} (12);
    \draw (13) -- (14);
    \draw (13) -- (15);
    \draw (13) -- (16);
    \draw (14) -- (15);
    \draw (14) -- node[midway, right, pos=0.5] {3} (16);
    \draw (15) -- node[midway, below, pos=0.5] {2} (16);
    \draw (17) -- (18);
    \draw (17) -- node[midway, left, pos=0.5] {2} (19);
    \draw (17) -- (20);
    \draw (18) -- (19);
    \draw (18) -- node[midway, right, pos=0.5] {3} (20);
    \draw (19) -- (20);
    \draw (21) -- (22);
    \draw (21) -- node[midway, left, pos=0.5] {4} (23);
    \draw (21) -- (24);
    \draw (22) -- (23);
    \draw (22) -- (24);
    \draw (23) -- (24);
    \end{tikzpicture}
    \end{center}
    The same procedure as above reveals that the only intersection graph satisfying one of \eqref{eq:intersectiongraph} is the one at the bottom left, which is the one given in the lemma. Note that this intersection graph is realised e.g.\ by $v_1=(0,1)$, $v_2=(1,0)$, $v_3=(1,1)$, $v_4=(1,3)$.
\end{proof}

\begin{lem} \label{lem:torus_5curves}
The system of curves $w_1, \dots, w_5$ given in (\ref{curves_torus}) is optimal, i.e.\ $\crn(5;1) = 14$. Moreover, for any minimal system $v_1, \dots, v_5$, there exists $A\in \slz$ such that $\{Av_1, \dots, Av_5\} = \{w_1, \dots, w_5\}$.
\end{lem}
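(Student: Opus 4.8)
The plan is to prove the lower bound $\crn(5;1)\ge 14$ and the uniqueness statement through a common reduction. Given any system $v_1,\dots,v_5$, I would first sum $\crn$ over the $\binom{5}{4}=5$ four-element subsystems; since each of the $\binom{5}{2}=10$ pairwise intersection numbers is counted $\binom{3}{2}=3$ times, this gives $3\,\crn(v_1,\dots,v_5)=\sum_{i_1<\dots<i_4}\crn(v_{i_1},\dots,v_{i_4})$. By Lemma~\ref{lem:torus_leq4} every four-curve subsystem has crossing number at least $\crn(4;1)=7$, and by Lemma~\ref{lem:torus_4curves} none of them is $8$; hence if all five subsystems had crossing number at least $9$, the right-hand side would be at least $45$. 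So whenever $\crn(v_1,\dots,v_5)\le 14$ (i.e. the right-hand side is at most $42$), at least one four-element subsystem is a minimal system. After renaming we may take it to be $\{v_1,v_2,v_3,v_4\}$, and by Lemma~\ref{lem:torus_leq4} we may apply an element of $\slz$ so that $v_i=w_i$ for $i=1,2,3,4$.

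Write $v_5=(a,b)$; up to replacing it by $-v_5$ assume $b\ge 0$, and note $b\ge 1$ since $b=0$ would force $v_5=w_1$. Computing the four intersection numbers with $w_1,w_2,w_3,w_4$ and using $|b-a|+|b+a|=2\max(b,|a|)$, one gets
\[
\crn(v_1,\dots,v_5)=7+\bigl(b+|a|+|b-a|+|a+b|\bigr)=7+3\max(b,|a|)+\min(b,|a|).
\]
A short case check on the integer $3\max(b,|a|)+\min(b,|a|)$ shows that it is at most $6$ only when $\max(b,|a|)\le 1$, which forces $v_5\in\{w_1,w_2,w_3,w_4\}$, impossible; hence $\crn(v_1,\dots,v_5)\ge 14$. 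Together with $\crn(w_1,\dots,w_5)=14$ from \eqref{upperbounds:torus}, this yields $\crn(5;1)=14$.

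For uniqueness, run the same reduction on a minimal system $v_1,\dots,v_5$: the displayed formula now reads $3\max(b,|a|)+\min(b,|a|)=7$, whose only solutions are $\{|a|,b\}=\{1,2\}$, so $v_5\in\{(2,1),(-2,1),(1,2),(-1,2)\}$. It remains to exhibit, for each of these four systems $\{w_1,w_2,w_3,w_4,v_5\}$, an element of $\slz$ carrying it to $\{w_1,\dots,w_5\}$: the identity for $v_5=(2,1)$; the matrix $\begin{pmatrix}0&-1\\1&0\end{pmatrix}$, which permutes $\{w_1,w_2,w_3,w_4\}$ and sends $(-1,2)$ to $(2,1)$, for $v_5=(-1,2)$; the square $C^2$ of $C=\begin{pmatrix}0&1\\-1&1\end{pmatrix}$ (which cyclically permutes $w_1,w_2,w_3$) for $v_5=(1,2)$, as one checks $C^2$ sends $w_4\mapsto w_5$ and $(1,2)\mapsto w_4$; and the product of the previous two matrices for $v_5=(-2,1)$. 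Precomposing with the normalising element from the reduction finishes the proof. I expect this last step to be the only real obstacle: the subgroup of $\slz$ stabilising $\{w_1,w_2,w_3,w_4\}$ setwise is too small to permute the four candidate fifth curves transitively (it produces two orbits), so one genuinely has to move the distinguished minimal $4$-subsystem — this is what the order-$3$ element $C$ accomplishes. The counting over $4$-subsets and the elementary inequality for $3\max+\min$ are routine.
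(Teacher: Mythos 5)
Your argument is correct and essentially identical to the paper's: both locate a $4$-subsystem with crossing number $7$ via the averaging bound (your hand count over $4$-subsets is exactly Lemma~\ref{lem:count} with $m=4$, $n=9$, combined with Lemma~\ref{lem:torus_4curves}), normalise it to $w_1,\dots,w_4$ using Lemma~\ref{lem:torus_leq4}, reduce to the inequality $3\max(|a|,|b|)+\min(|a|,|b|)\ge 7$, and settle uniqueness by exhibiting explicit elements of $\slz$ for the four candidates for $v_5$. One cosmetic point: the case $\max=2$, $\min=0$ also makes your quantity equal to $6$, but it is excluded because such a vector is not primitive, hence not a curve (the paper's assertion that $a,b\neq 0$ and $\max\{|a|,|b|\}\ge 2$ rests on the same fact).
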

\begin{proof}
    Suppose $v_1, \dots, v_5 \in \Z^2$ is a minimal system of five curves, i.e.\ $\crn(v_1, \dots, v_5)=\crn(5;1)$. Then there exist $1\leq i_1 < i_2 < i_3 < i_4 \leq 5$ such that $\crn(v_{i_1},\dots, v_{i_4}) = 7$, since otherwise, by Lemma~\ref{lem:count} and Lemma~\ref{lem:torus_4curves}, $\crn(5;1)=\crn(v_1, \dots, v_5) \geq  \left \lceil \frac{5 \cdot 9}{3} \right \rceil=15$, contradicting the upper bound given in (\ref{upperbounds:torus}).  Thus, we may assume that $\crn(v_1, \dots, v_4)=7$ and, by Lemma~\ref{lem:torus_leq4}, that $v_1=(1,0)$, $v_2=(0,1)$, $v_3=(1,1)$, $v_4=(-1,1)$, $v_5=(a,b)$. We have:
    \begin{align}
        \sum_{i=1}^{4} i(v_i, v_5) &= |b|+|a|+|b-a|+|a+b| \nonumber\\
        &= 3 \max\{|a|,|b|\}+\min\{|a|,|b|\} \nonumber\\
        &\geq 7 \label{eq:torus_geq7},
    \end{align}
    where the last inequality holds because $a,b \neq 0$ and $\max\{|a|,|b|\} \geq 2$. It follows that 
    \begin{align*}
        \crn(5;1) &= \crn(v_1, \dots, v_5) \\
        &= \crn(v_1, \dots, v_4) + \sum_{i=1}^{4} i(v_i, v_5) \\
        &\geq 14.
    \end{align*}
    For uniqueness, observe that if $\crn(v_1, \dots, v_5)=14$, we get equality in (\ref{eq:torus_geq7}), implying that $v_5 \in \{(2,1), (-2,1), (1,2), (-1,2)\}$. If $v_5 = (-2,1)$, apply 
    $\begin{pmatrix}
        1 & 1\\
        0 & 1
    \end{pmatrix}$,
    if $v_5=(1,2)$, apply
    $\begin{pmatrix}
        1 & -1\\
        1 & 0
    \end{pmatrix}$,
    if $v_5=(-1,2)$, apply
    $\begin{pmatrix}
        0 & -1\\
        1 & 0
    \end{pmatrix}$.
\end{proof}

\begin{lem}
    The system of curves $w_1, \dots, w_6$ given in (\ref{curves_torus}) is optimal, i.e.\ $\crn(6;1) = 24$.
\end{lem}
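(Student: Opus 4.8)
The upper bound $\crn(6;1)\le 24$ is already recorded in \eqref{upperbounds:torus}, so the plan is to prove that every system $v_1,\dots,v_6\in\mathbb{Z}^2$ of six pairwise non-homotopic curves on the torus satisfies $\crn(v_1,\dots,v_6)\ge 24$. I would split into two cases according to whether some four of the six curves span a subsystem of crossing number $7$.

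If some quadruple, say $v_1,v_2,v_3,v_4$, has crossing number $7$, then by Lemma~\ref{lem:torus_leq4} we may, after renaming and a homeomorphism, assume $v_1=(1,0)$, $v_2=(0,1)$, $v_3=(1,1)$, $v_4=(-1,1)$. Exactly as in the proof of Lemma~\ref{lem:torus_5curves}, $\sum_{i=1}^{4} i(v_i,(a,b))=3\max\{|a|,|b|\}+\min\{|a|,|b|\}$ for any curve $(a,b)$. Since the curve must differ from $v_1,\dots,v_4$, a short check shows this sum equals $7$ exactly when $(a,b)$ lies in $T:=\{(2,1),(2,-1),(1,2),(1,-2)\}$ and is at least $10$ otherwise (the values $8$ and $9$ are unavailable to a primitive vector), and a quick computation shows any two distinct curves of $T$ intersect at least $3$ times. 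Writing $s_j=\sum_{i=1}^{4}i(v_i,v_j)$ for $j=5,6$ and using $\crn(v_1,\dots,v_6)=7+s_5+s_6+i(v_5,v_6)$, the three subcases ($v_5,v_6$ both in $T$; exactly one in $T$; neither in $T$) give $\crn(v_1,\dots,v_6)\ge\min\{7+7+7+3,\,7+7+10+1,\,7+10+10+1\}=24$.

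If no quadruple has crossing number $7$, then by Lemma~\ref{lem:torus_4curves} none has crossing number $8$ either, so every $4$-element subsystem has crossing number $\ge 9$. I would feed this into Lemma~\ref{lem:count}: with $k=5,\,m=4,\,n=9$ it forces every $5$-element subsystem to have crossing number $\ge 15$, and with $k=6,\,m=5,\,n=15$ it gives $\crn(v_1,\dots,v_6)\ge\lceil 22.5\rceil=23$. Suppose this were an equality. Each pairwise intersection number appears in exactly four of the six subsystems $\{v_1,\dots,v_6\}\setminus\{v_i\}$, so their crossing numbers sum to $4\cdot 23=92<6\cdot 16$; since each is $\ge 15$, at least one, say $\Gamma'=\{v_1,\dots,v_5\}$, has crossing number exactly $15$. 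Likewise the five $4$-element subsystems of $\Gamma'$ have crossing numbers summing to $3\cdot 15=45=5\cdot 9$, forcing \emph{every} one of them to have crossing number exactly $9$. The final ingredient is that no five-curve system can have all its $4$-element subsystems of crossing number $9$: from the intersection graph prescribed by Lemma~\ref{lem:torus_4curves} — normalising a suitable intersection-$1$ edge and solving for the remaining two curves — one gets that such a quadruple equals $\{(1,0),(0,1),(1,1),(3,1)\}$ up to a homeomorphism, and then requiring the four quadruples obtained by adjoining a fifth curve $(a,b)$ to all have crossing number $9$ yields
\begin{align*}
|a|+|b|+|a-b|&=6, & |a|+|b|+|a-3b|&=4,\\
|b|+|a-b|+|a-3b|&=5, & |a|+|a-b|+|a-3b|&=3;
\end{align*}
subtracting these in pairs forces $|b|=|a|+2$ and then $2|a|+|a-3b|=2$, which has no solution with $a,b\neq 0$ (and the equations themselves preclude $a=0$ or $b=0$). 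This contradiction closes the second case, so $\crn(v_1,\dots,v_6)\ge 24$ in all cases and $\crn(6;1)=24$.

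I expect the crux to be the last step of the second case: showing that no five-curve system on the torus has all of its $4$-element subsystems of crossing number $9$. This requires upgrading the rigidity of Lemma~\ref{lem:torus_4curves} to an explicit model of a crossing-number-$9$ quadruple — in particular checking that the identification of the graph's vertices with the curves, and hence the quadruple up to homeomorphism, is forced — and then correctly translating the four ``crossing number $9$'' conditions on the enlarged system into the small solvable arithmetic system above. By comparison the first case is a routine estimate once the quadruple has been put in standard position, and the counting inequalities of Lemma~\ref{lem:count} do the remaining work in the second case.
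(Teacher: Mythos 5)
Your proof is correct. The first case is essentially the paper's argument: the paper forces both extra curves into the set $\{(2,1),(1,2),(-2,1),(-1,2)\}$ by playing $s_5+s_6\le 16$ against the upper bound $24$, while you enumerate the three subcases directly; both land on $7+7+7+3=24$. The second case, however, takes a genuinely different route. The paper also locates a quadruple of crossing number exactly $9$ (otherwise Lemma~\ref{lem:count} already gives $25$), but then exploits the intersection graph of Lemma~\ref{lem:torus_4curves} more directly: that graph contains a triple with crossing number $3$, and the hypothesis that every quadruple has crossing number $\ge 9$ forces each of the other three curves to meet that triple at least $6$ times, whence $\crn\ge 3+3+3\cdot 6=24$ in three lines. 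You instead feed only the numerical bound $\ge 9$ into Lemma~\ref{lem:count} to reach $23$, and then exclude $23$ by double averaging ($4\cdot 23=92<6\cdot 16$ and $3\cdot 15=5\cdot 9$) together with the rigidity computation that no five-curve system has all its quadruples of crossing number $9$. That endgame checks out: the normalisation of the crossing-$9$ quadruple to $\{(1,0),(0,1),(1,1),(3,1)\}$ is forced by the graph up to a possibly orientation-reversing homeomorphism (harmless, since geometric intersection numbers are preserved), and your four conditions are indeed inconsistent, e.g.\ via $|b|=|a|+2$ and $2|a|+|a-3b|=2$. The trade-off is that your route keeps everything mechanical until the last step but pays with an explicit arithmetic case analysis, whereas the paper's use of the interior crossing-$3$ triple avoids classifying the quadruple at all and is considerably shorter.
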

\begin{proof}
    Suppose $v_1, \dots, v_6 \in \Z^2$ is a minimal system of 6 curves. We consider two cases: First, suppose there exist $1\leq i_1 < \dots < i_4 \leq 6$ such that $\crn(v_{i_1}, \dots , v_{i_4})=7$. We may assume w.l.o.g.\ that $\{v_{i_1}, \dots , v_{i_4}\}=\{v_1, \dots, v_4\}$ and, by Lemma~\ref{lem:torus_leq4}, we may further assume that $v_1=(1,0)$, $v_2=(0,1)$, $v_3=(1,1)$, $v_4=(-1,1)$, $v_5=(a_5,b_5)$, $v_6=(a_6,b_6)$. As in (\ref{eq:torus_geq7}), for $j=5,6$, we have
    \begin{align}
        \sum_{i=1}^{4} i(v_i, v_j) &= 3\max\{|a_j|, |b_j|\} + \min\{|a_j|,|b_j|\} \nonumber \\
        &\geq 7, \label{eq:7}
    \end{align}
    with equality if and only if $v_j \in A=\{(2,1), (1,2), (-2,1), (-1,2)\}$. In fact, if this sum is not equal to 7, then it is at least 10, implying equality in (\ref{eq:7}) for $j=5,6$ since 
    \begin{align*}
        \sum_{i=1}^4 i(v_i, v_5) + \sum_{i=1}^4 i(v_i, v_6) &= \underbrace{\crn(v_1, \dots, v_6)}_{\leq 24} - \underbrace{\crn(v_1, \dots, v_4)}_{=7} - \underbrace{i(v_5, v_6)}_{\geq 1} \\
        & \leq 16.
    \end{align*}
    Now observe that $i(v,w) \geq 3$ for any $v,w \in A$, thus,
    \begin{align*}
        \crn(6;1) &= \crn(v_1, \dots, v_6) \\
        &= \crn(v_1, \dots, v_4) + i(v_5, v_6) +\sum_{j=5}^{6} \sum_{i=1}^{4} i(v_i, v_j) \\
        &\geq 7+3+2\cdot7 \\
        &= 24.        
    \end{align*}
    
    For the second case, assume that there are no four curves among $v_1, \dots, v_6$ with crossing number 7. By Lemma~\ref{lem:torus_4curves}, that means any four curves intersect at least 9 times. There must exist $1\leq i_1 < \dots < i_4 \leq6$ such that $\crn(v_{i_1}, \dots , v_{i_4})=9$, since otherwise, by Lemma~\ref{lem:count}, $\crn(v_1, \dots, v_6) \geq \left \lceil \frac{6\cdot 5}{4\cdot 3} 10 \right \rceil=25$, contradicting $\crn(6;1)\leq 24$. We may thus assume that $\crn(v_1, \dots, v_4)=9$, and that the intersection graph of $v_1, \dots, v_4$ is as in Lemma~\ref{lem:torus_4curves}. In particular, this means $\crn(v_1, v_2, v_3)=3$. By assumption, $\crn(v_1, v_2, v_3, v_j) \geq 9$ for $j=4,5,6$, therefore
    \begin{align*}
        \sum_{i=1}^{3} i(v_i, v_j) &= \crn(v_1, v_2, v_3, v_j) - \crn(v_1, v_2, v_3) \\
        &\geq 9-3 =6.
    \end{align*}
    This implies that
    \begin{align*}
        \crn(6;1) &= \crn(v_1, \dots, v_6) \\
        &= \crn(v_1, v_2, v_3) + \crn(v_4,v_5,v_6) +\sum_{j=4}^{6} \sum_{i=1}^{3} i(v_i, v_j) \\
        &\geq 3+3+3\cdot 6 \\
        &=24. \qedhere
    \end{align*}
\end{proof}

\begin{rem}
    With the same kind of reasoning as in Lemma~\ref{lem:torus_5curves}, it is possible to prove that realisations of $\crn(6;1)$ are unique up to homeomorphisms and isotopies of curves.
\end{rem}


\section{Genus 2} \label{section:genus2}

We now focus on our main surface of interest, $\Sigma_2$. For simpler notation, throughout this chapter, we will use $\crn(k)$ to denote $\crn(k;2)$. Our main goal is to determine the minimal crossing number of up to 12 curves. It will transpire that with the exception of very small systems, that is, systems of up to three curves, minimal systems are unique up to homeomorphisms of $\Sigma_2$ and isotopies of curves. 

Recall from Proposition~\ref{prop:smallkgeneralg} that trivially $\crn(1)=\crn(2)=\crn(3)=0$ and further:
\begin{align}
\begin{split}
    \crn(4)= 1, \\
    \crn(5) = 2, \\
    \crn(6) = 4, \\
    \crn(7) = 6. \label{eq:cr_kleq7}
\end{split}
\end{align}
In the proof of Proposition~\ref{prop:smallkgeneralg}, we constructed realisations of the above minimal crossing numbers. They correspond to subsystems of the curves $\delta_1, \dots, \delta_7$ given in Figure~\ref{fg:genus2_11curves}. Recall also that the minimal systems given in the proof of that proposition were constructed inductively, i.e.\ they were obtained from the minimal system of $k-1$ curves by adding a curve with as few intersection points as possible. At first glance, it seems plausible that this behaviour is true in general, in this chapter we will see that this is not the case: While this turns out to be true for minimal systems of up to 11 curves, the unique minimal system of 12 curves cannot be obtained in this fashion. In fact, the realisation of $\crn(12)$ looks remarkably different from the smaller optimal systems.

\subsection{Minimal crossing number of up to 11 curves}

We consider the curves $\delta_1, \dots, \delta_{11}$ in Figure~\ref{fg:genus2_11curves}. Our primary goal is to prove for $8 \leq k \leq 11$, that $\delta_1, \dots, \delta_{k}$ is a minimal system.

    \begin{figure}[ht] \centering 
        \includegraphics[width=0.9\textwidth]{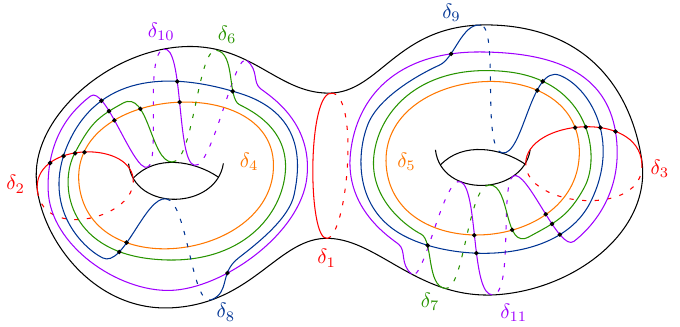}
        \caption{Curves $\delta_1, \dots, \delta_{11}$
        }
        \label{fg:genus2_11curves}
    \end{figure}
The curves in the figure are in minimal position, so the geometric intersection numbers may be determined by counting intersection points.
We have:
\begin{align} \label{upperbounds:nk}
\begin{split}
    \crn(8) &\leq \ndelta{8} = 10, \\
    \crn(9) &\leq \ndelta{9} = 14, \\
    \crn(10) &\leq \ndelta{10} = 21, \\
    \crn(11) &\leq \ndelta{11} = 28.  
\end{split}
\end{align}

In order to prove that these systems are optimal, we treat the cases ${k=8,9}$ and $k=10,11$ separately, needing stronger results for the latter. In both cases, we proceed as follows: First, we give lower bounds for the crossing number of curve systems that do not contain any three pairwise disjoint curves. We apply these results to prove the existence of a separating curve in minimal systems. We then find lower bounds for $\crn(k)$ depending on the number of curves intersecting the separating curve. This approach not only yields the minimality of the above systems but also provides enough information about the structure of minimal systems that uniqueness is an almost immediate consequence.

\begin{lem}\label{lem:nopop}
    For $k\geq 3$, let $\gamma_1, \dots , \gamma_k \subset \Sigma_2$ be a system of curves such that for any $1\leq i_1 < i_2 < i_3 \leq k$, the curves $\gamma_{i_1}, \gamma_{i_2}, \gamma_{i_3}$ are not pairwise disjoint. Then
    \[\crn(\gamma_1, \dots , \gamma_k) \geq \left \lceil \frac{k(k-2)}{4} \right \rceil.\]
\end{lem}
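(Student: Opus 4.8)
The plan is to reduce the statement to a purely combinatorial fact — Mantel's theorem on triangle-free graphs — since the surface itself plays no role; only the pattern of which pairs of curves intersect matters. First I would introduce the \emph{disjointness graph} $G$ on vertex set $\{1,\dots,k\}$, putting an edge between $i$ and $j$ precisely when $i(\gamma_i,\gamma_j)=0$. The hypothesis that no three of the curves $\gamma_{i_1},\gamma_{i_2},\gamma_{i_3}$ are pairwise disjoint translates exactly into the statement that $G$ contains no triangle.

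Next I would invoke Mantel's theorem: a triangle-free graph on $k$ vertices has at most $\lfloor k^2/4\rfloor$ edges. Hence the number of pairs $\{i,j\}$ with $i(\gamma_i,\gamma_j)\geq 1$ — that is, the number of non-edges of $G$ — is at least $\binom{k}{2}-\lfloor k^2/4\rfloor$. A short computation, splitting into the cases $k$ even and $k$ odd (writing $k=2m$ and $k=2m+1$ respectively), gives the identity $\binom{k}{2}-\lfloor k^2/4\rfloor=\lceil k(k-2)/4\rceil$. Since each pair with $i(\gamma_i,\gamma_j)\geq 1$ contributes at least $1$ to $\crn(\gamma_1,\dots,\gamma_k)=\sum_{i<j}i(\gamma_i,\gamma_j)$, the claimed inequality follows.

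There is no serious obstacle here: the only point requiring a little care is the floor/ceiling bookkeeping in the identity $\binom{k}{2}-\lfloor k^2/4\rfloor=\lceil k(k-2)/4\rceil$, which I would verify directly in the two parity cases. If one prefers the argument to be self-contained rather than citing Mantel's theorem, one can replace that step with its standard one-line proof — for instance, observing that in a triangle-free graph the neighbourhoods of two adjacent vertices are disjoint, or using the Cauchy–Schwarz/degree-counting estimate $\sum_v d(v)^2 \le 2|E(G)|\,k/2$ together with $\sum_v d(v) = 2|E(G)|$.
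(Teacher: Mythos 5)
Your argument is correct and is essentially the paper's own proof: the paper works with the intersection graph (edge when $i(\gamma_i,\gamma_j)\geq 1$) and applies Turán's theorem to its triangle-free complement, which is exactly your disjointness graph with Mantel's theorem. The floor/ceiling identity and the final counting step are the same in both.
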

\begin{proof}
    As in Chapter~\ref{section:genus1}, we consider the intersection graph of $ \gamma_1, \dots, \gamma_k$. It is sufficient to consider the unweighted intersection graph, i.e.\ the graph with vertices $V=\{1, \dots, k\}$ and edges $E= \{\{i,j\} \mid i,j \in V \text{ and } i(\gamma_i, \gamma_j)\geq 1\}$. Since there is no triple of pairwise disjoint curves among $\gamma_1, \dots, \gamma_k$, the complement of the intersection graph is triangle-free. Thus, by Turán's theorem (Theorem 12.3 in \cite{bondy2008graph}), the complement contains at most $\left \lfloor \frac{k^2}{4} \right \rfloor$ edges, yielding the desired lower bound for the crossing number:
    \begin{align}
        \crn(\gamma_1, \dots, \gamma_k) &\geq \#E \label{eq:nrofedges}\\
        &\geq \binom{k}{2}-\left \lfloor \frac{k^2}{4} \right \rfloor \label{eq:turanstheorem}\\
        &= \left \lceil \frac{k(k-2)}{4} \right \rceil.\nonumber \qedhere
    \end{align}
\end{proof}

\begin{cor} \label{cor:pop}
    For $4 \leq k\leq 9$, any minimal system $\gammak{k} \subset \Sigma_2$ contains three pairwise disjoint curves, i.e.\ there exist $1\leq i_1 < i_2 < i_3 \leq k$ such that $\crn(\gamma_{i_1}, \gamma_{i_2}, \gamma_{i_3})=0$. Moreover, one of $\gamma_{i_1}, \gamma_{i_2}, \gamma_{i_3}$ is separating.
\end{cor}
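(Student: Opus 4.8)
The first assertion — that a minimal system of $k$ curves with $4\leq k\leq 9$ contains three pairwise disjoint curves — should follow by combining Lemma~\ref{lem:nopop} with the upper bounds already established. If no such triple existed, Lemma~\ref{lem:nopop} would force $\crn(k) \geq \lceil k(k-2)/4 \rceil$; I would tabulate this quantity for $k=4,\dots,9$ and check it against the known values $\crn(4)=1$, $\crn(5)=2$, $\crn(6)=4$, $\crn(7)=6$ (from \eqref{eq:cr_kleq7}) and the upper bounds $\crn(8)\leq 10$, $\crn(9)\leq 14$ (from \eqref{upperbounds:nk}). For instance $\lceil 9\cdot 7/4\rceil = 16 > 14$ and $\lceil 8\cdot 6/4\rceil = 12 > 10$, and similarly for the smaller cases, so the hypothesis of Lemma~\ref{lem:nopop} must fail: there is a triple $\gamma_{i_1},\gamma_{i_2},\gamma_{i_3}$ of pairwise disjoint curves. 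I should double-check the small cases $k=4,5$ carefully since there the bound $\lceil k(k-2)/4\rceil$ is closest to $\crn(k)$; for $k=4$ we need $\lceil 2 \rceil = 2 > 1 = \crn(4)$, which is fine, and for $k=5$, $\lceil 15/4\rceil = 4 > 2 = \crn(5)$, also fine.

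**The separating curve.** The second assertion is that among any such disjoint triple, at least one curve is separating. Here I would argue by contradiction: suppose $\gamma_{i_1},\gamma_{i_2},\gamma_{i_3}$ are pairwise disjoint and all three are non-separating. The key topological input is a classification of configurations of disjoint non-separating simple closed curves on $\Sigma_2$ up to homeomorphism — there are only a few, controlled by how the complement decomposes. Cutting along all three curves must yield a (possibly disconnected) surface with total Euler characteristic $\chi(\Sigma_2) = -2$ and six boundary circles; since $\Sigma_2$ has genus $2$, any collection of three disjoint, pairwise non-homotopic, non-separating curves either cuts $\Sigma_2$ into a four-holed sphere (when they are ``parallel-free'' in a suitable sense) or into a pair of pants together with another piece, etc. The upshot I want is: if all three are non-separating and pairwise disjoint and pairwise non-homotopic, then $\Sigma_2 \setminus (\gamma_{i_1}\cup\gamma_{i_2}\cup\gamma_{i_3})$ has no genus and in fact these three curves, up to homeomorphism, look like a ``standard'' triple — which then would already be a maximal disjoint system, forcing every other curve $\gamma_j$ to intersect at least one of them. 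But that alone is not a contradiction; I need to extract more.

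**Where the real argument lies.** The cleanest route is probably to show that if a minimal system contains a disjoint triple of non-separating curves, one could modify it to lower the crossing number or to produce a disjoint triple containing a separating curve, contradicting minimality (or reducing to the separating case). Concretely: a standard triple of disjoint non-separating curves $a_1, a_2, c$ on $\Sigma_2$ can always be completed by a separating curve $s$ disjoint from at least two of them — indeed the pants decomposition these curves induce contains a separating curve among its curves in at least one of the configurations, or one can slide. The honest statement I expect the author uses is: \emph{any} pants decomposition of $\Sigma_2$ (which has $3$ curves) contains a separating curve, OR the three disjoint curves, being pairwise non-homotopic, must include a separating one because there do not exist three pairwise disjoint, pairwise non-homotopic, non-separating curves on $\Sigma_2$ that are ``maximal'' without a separating companion. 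I would verify this last claim by direct enumeration: on $\Sigma_2$, a maximal collection of disjoint non-homotopic curves has size $3g-3 = 3$, and checking the finitely many topological types of such pants decompositions shows each contains a separating curve except the one consisting of three non-separating curves whose complement is two pairs of pants glued along all three — but in that case one checks the three curves are \emph{not} pairwise non-homotopic or that a separating curve disjoint from all three still exists as a non-decomposition curve. The main obstacle is precisely this piece of surface topology: pinning down exactly which disjoint triples of non-separating curves occur and confirming that the minimality of the system (via the counting lemmas) rules out all of them, so that the disjoint triple guaranteed by the first part must contain a separating curve. I would lean on Lemma~\ref{lem:count}/Corollary~\ref{cor:count} applied to the subsystem obtained by cutting along the putative non-separating triple to derive the needed contradiction with the upper bounds in \eqref{upperbounds:nk}.
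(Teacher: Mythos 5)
The first half of your proposal is correct and is exactly the paper's argument: if no three of the curves were pairwise disjoint, Lemma~\ref{lem:nopop} would give $\ngamma{k}\geq\lceil k(k-2)/4\rceil$, which exceeds the values in \eqref{eq:cr_kleq7} and the upper bounds in \eqref{upperbounds:nk} for every $4\leq k\leq 9$; your numerical checks are right.

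For the separating-curve claim, however, you head down a path that cannot work. Your proposed ``verification by enumeration'' rests on the hope that three pairwise disjoint, pairwise non-homotopic, non-separating curves on $\Sigma_2$ either fail to be non-homotopic or admit a disjoint separating companion. Neither escape is available: the configuration whose complement is two pairs of pants glued to each other along all three curves (dual graph a theta graph) consists of three genuinely non-homotopic non-separating curves, and being a pants decomposition it admits \emph{no} further disjoint essential curve, separating or otherwise. So a disjoint non-separating triple can perfectly well exist; what must be ruled out is that a \emph{minimal system} contains one. The key step you are missing is quantitative: in that theta configuration each $\gamma_{i_l}$ has one pair of pants on each side, so any other curve $\gamma_j$ of the system crosses $\gamma_{i_1}\cup\gamma_{i_2}\cup\gamma_{i_3}$ a positive and even number of times, whence $i(\gamma_j,\gamma_{i_1})+i(\gamma_j,\gamma_{i_2})+i(\gamma_j,\gamma_{i_3})\geq 2$. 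Summing over the $k-3$ remaining curves gives $\ngamma{k}\geq 2(k-3)$, already contradicting $\crn(k)$ for $4\leq k\leq 7$; for $k=8,9$ one adds the crossings among the remaining curves to get $\ngamma{k}\geq 2(k-3)+\crn(k-3)=12$ resp.\ $16$, contradicting \eqref{upperbounds:nk}. Your instinct to invoke the counting lemmas on the complement is not needed; this single inequality closes the argument.
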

\begin{proof}
    The existence of pairwise disjoint curves $\gamma_{i_1}, \gamma_{i_2}, \gamma_{i_3}$ follows from the upper bounds for $\crn(k)$ given in \eqref{eq:cr_kleq7} and \eqref{upperbounds:nk} together with Lemma~\ref{lem:nopop}. To see that one of the curves must be separating, observe that if all of them are non-separating, then $i(\gamma_j, \gamma_{i_1})+i(\gamma_j, \gamma_{i_2})+i(\gamma_j, \gamma_{i_3}) \geq 2$ for $j \neq i_1,i_2,i_3$. Therefore, 
        \[ \ngamma{k} \geq 2(k-3),\]
    contradicting the values of $\crn(k)$ in \eqref{eq:cr_kleq7} for $4\leq k \leq 7$. For $k=8,9$, use that the remaining curves $\{\gamma_i \mid i\neq i_1,i_2,i_3\}$ intersect at least $\crn(k-3)$ times to get
    \[\ngamma{k} \geq 2(k-3) + \crn(k-3).\]
    We know $\crn(5)=2$ and $\crn(6)=4$, so the above lower bound contradicts the upper bound given in (\ref{upperbounds:nk}).
\end{proof}

The above corollary guarantees the existence of a non-contractible separating curve $\delta$ in minimal systems of $k$ curves for $4\leq k \leq 9$. The two connected components of $\Sigma_2 \setminus \delta$ are homeomorphic to a punctured torus. We would like to give a lower bound for the crossing number of curves contained in $\Sigma_2 \setminus \delta$, by viewing them as a curve system in the disjoint union of two tori and applying the results from the previous chapter. For this, we need that non-homotopic curves in a connected component of $\Sigma_2 \setminus \delta$ remain non-homotopic when considered as curves in a torus, which we will prove in the following lemma. As a consequence, the number of intersection points is bounded below by the minimal crossing number for the torus.

\begin{lem}
    Let $p \in \Sigma_1$ and let $\alpha, \beta \subset \Sigma_1 \setminus \{p\}$ be two non-contractible simple closed curves, not homotopic to $p$. Then $\alpha$ and $\beta$ are homotopic in $\Sigma_1 \setminus \{p\}$ if and only if they are homotopic in $\Sigma_1$.
\end{lem}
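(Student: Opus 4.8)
The statement asks to prove that two essential simple closed curves $\alpha, \beta$ in $\Sigma_1 \setminus \{p\}$ that are not homotopic to the puncture are homotopic in $\Sigma_1 \setminus \{p\}$ if and only if they are homotopic in $\Sigma_1$. One direction is trivial: a homotopy in $\Sigma_1 \setminus \{p\}$ is in particular a homotopy in $\Sigma_1$. So the content is the converse: if $\alpha \sim \beta$ in $\Sigma_1$, then $\alpha \sim \beta$ in $\Sigma_1 \setminus \{p\}$. The plan is to use the standard fact (from the change-of-coordinates principle / classification of simple closed curves on surfaces, as in Chapter 1 of \cite{farb2012primer}) that on a surface, two essential simple closed curves that are homotopic are in fact isotopic, and then to upgrade an ambient isotopy of $\Sigma_1$ carrying $\alpha$ to $\beta$ into one that fixes the puncture.

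\textbf{Key steps.} First I would reduce to isotopy: since $\alpha$ and $\beta$ are essential simple closed curves on the closed surface $\Sigma_1$ and are homotopic there, they are isotopic in $\Sigma_1$, hence there is an ambient isotopy $\Phi_t$ of $\Sigma_1$ with $\Phi_0 = \mathrm{id}$ and $\Phi_1(\alpha) = \beta$. The track of the point $p$ under this isotopy is a path $q(t) = \Phi_t^{-1}(p)$ (or $\Phi_t(p)$, depending on bookkeeping) from $p$ to some point $p'$ in $\Sigma_1$. Now I need $\Phi_1$ to fix $p$, not merely to send $\alpha$ to $\beta$ as subsets. Second, I would use a point-pushing correction: compose $\Phi_1$ with a homeomorphism $\Psi$ supported in a small disc containing the path $q$ that pushes $p'$ back to $p$ — more precisely, choose a path from $\Phi_1(p)$ back to $p$ that is \emph{disjoint from $\beta$}, which is possible because $\beta$ does not separate $\Sigma_1 \setminus \{p\}$ into a piece containing only the relevant points (here I use that $\beta$, being a simple closed curve on the torus, has connected complement, so the complement of $\beta$ is path-connected and we can route the correction path in $\Sigma_1 \setminus \beta$). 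Then a point-push along that disjoint path is supported away from $\beta$, hence fixes $\beta$ setwise while moving $\Phi_1(p)$ to $p$; the composition is an ambient isotopy of $\Sigma_1$, fixing $p$ at time $1$, carrying $\alpha$ to $\beta$. Third, I would observe that because the correction is supported away from $\beta$ and one can arrange the whole isotopy to avoid $p$ in its later stages (again using connectedness of $\Sigma_1 \setminus \beta$ to keep $p$ off the curve), the resulting isotopy restricts to $\Sigma_1 \setminus \{p\}$, giving $\alpha \sim \beta$ there. Alternatively, and perhaps more cleanly, I would phrase this via the Birman exact sequence / the fact that the forgetful map $\mathrm{Mod}(\Sigma_1, p) \to \mathrm{Mod}(\Sigma_1)$ has kernel generated by point-pushes, which act trivially on the isotopy class of any curve whose complement is connected — but the hands-on argument above avoids invoking that machinery.

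\textbf{Main obstacle.} The delicate point is making sure the ambient isotopy of $\Sigma_1$ carrying $\alpha$ to $\beta$ can be chosen so that $p$ stays off the moving curve for all time, and that the final point-pushing correction that re-fixes $p$ is supported in the complement of $\beta$. Both reduce to the same topological input: a simple closed curve on the torus has path-connected complement, so $p$ and $\Phi_1(p)$ (which we may assume both lie in $\Sigma_1 \setminus \beta$ by a generic perturbation, since $\alpha,\beta$ are not homotopic to $p$ and hence avoid it) can be joined by a path missing $\beta$, and the curve can be perturbed off $p$ during the isotopy. I would spell out these perturbation arguments carefully; everything else is bookkeeping. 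One subtlety worth a sentence: the hypothesis that neither $\alpha$ nor $\beta$ is homotopic to (a small loop around) $p$ is what guarantees $\alpha$ and $\beta$ are essential in $\Sigma_1$ and also lets us assume they are disjoint from $p$ to begin with, so the reduction to the closed surface is legitimate.
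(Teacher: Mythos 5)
There is a genuine gap at the final step of your argument, and it sits exactly where the content of the lemma lives. Your construction produces a homeomorphism $F=\Psi\circ\Phi_1$ of the pair $(\Sigma_1,p)$ with $F(\alpha)=\beta$ that is isotopic to the identity in $\Sigma_1$ \emph{without} fixing $p$ along the way. To conclude $\alpha\sim\beta$ in $\Sigma_1\setminus\{p\}$ you need the moving curve $\Phi_t(\alpha)$ to miss $p$ for all $t$ (equivalently, that $F$ is isotopic to the identity rel $p$), and you only assert this ``can be arranged'' using connectedness of $\Sigma_1\setminus\beta$. Connectedness of the complement controls where $p$ sits at the end of the isotopy, not whether the intermediate curves hit $p$; a homotopy of curves is a map of a $2$-complex into a $2$-manifold, so intersections with a point cannot be removed by general position. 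A concrete way to see something is missing: your argument uses only (a) homotopic essential simple closed curves in a closed surface are ambiently isotopic, and (b) the complement of $\beta$ is connected. Both hold for a non-separating curve in a once-punctured surface of genus $g\geq 2$, where the conclusion is false, because the point-pushing subgroup of $\mathrm{Mod}(\Sigma_g,p)$ is non-trivial for $g\geq 2$ and moves isotopy classes of curves. In particular, your parenthetical claim that point-pushes act trivially on any curve with connected complement is not correct in general. What rescues the torus is torus-specific: the complement of an essential simple closed curve in $\Sigma_1$ is an \emph{annulus}, so the two push-off curves $\gamma_1,\gamma_2$ of any loop through $p$ cobound an annulus missing $p$, whence $T_{\gamma_1}=T_{\gamma_2}$ in $\mathrm{Mod}(\Sigma_1,p)$, every point-push is trivial, and the forgetful map $\mathrm{Mod}(\Sigma_1,p)\to\mathrm{Mod}(\Sigma_1)$ is injective. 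With that input your mapping-class-group route does close up, but it must be stated and proved; it is not a consequence of connectedness.

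For comparison, the paper's proof stays entirely at the level of the curves: it puts the curves in minimal position in $\Sigma_1\setminus\{p\}$, shows by counting the endpoints of the arcs of $\beta$ in the annulus $\Sigma_1\setminus\alpha$ that bigons in $\Sigma_1$ come in pairs of which at most one can contain $p$ (so ``no bigon in the punctured torus'' forces ``no bigon in the torus''), concludes via the bigon criterion that the curves are disjoint, and finishes by observing that two disjoint homotopic curves cobound two annuli, only one of which contains $p$. That argument exploits the annulus structure of the complement twice; your proof needs to invoke it at least once.
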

\begin{proof}
    Clearly, if $\alpha$ and $\beta$ are homotopic in $\Sigma_1 \setminus \{p\}$, they are also homotopic in $\Sigma_1$. For the converse, suppose that $\alpha \sim \beta$ in $\Sigma_1$. Apply isotopies in $\Sigma_1 \setminus \{p\}$ and obtain curves $\tilde{\alpha} \sim \alpha$ and $\tilde{\beta} \sim \beta$  such that $\tilde{\alpha}$ and $\tilde{\beta}$ do not bound a bigon in $\Sigma_1 \setminus \{p\}$. We will prove that there is also no bigon between $\tilde{\alpha}$ and $\tilde{\beta}$ in $\Sigma_1$. For this, assume that there is such a bigon. Consider $\Sigma_1 \setminus \tilde{\alpha}$, which is homeomorphic to an annulus since $\tilde{\alpha}$ is non-trivial and not homotopic to $p$. As $\tilde{\alpha}$ and $\tilde{\beta}$ bound a bigon, in particular they intersect, so $\tilde{\beta} \cap (\Sigma_1 \setminus \tilde{\alpha})$ is a collection of disjoint simple arcs with endpoints on the boundary. Bigons in $\Sigma_1$ between $\tilde{\alpha}$ and $\tilde{\beta}$ correspond to arcs starting and ending on the same boundary component. Observe that the number of endpoints on each boundary component must coincide, and therefore there exists a second bigon between $\tilde{\alpha}$ and $\tilde{\beta}$. But at most one of those bigons contains $p$, so at least one of them is a bigon in $\Sigma_1 \setminus \{p\}$, giving a contradiction. Hence, $\tilde{\alpha}$ and $\tilde{\beta}$ do not bound a bigon in $\Sigma_1$, so by the bigon criterion (see e.g.\ Proposition 1.7 in \cite{farb2012primer}) they are in minimal position in $\Sigma_1$. This means that $\tilde{\alpha} \cap \tilde{\beta} = \emptyset$ since $\tilde{\alpha} \sim \tilde{\beta}$ in $\Sigma_1$. Thus, $\Sigma_1 \setminus (\tilde{\alpha} \cup \tilde{\beta})$ is the disjoint union of two annuli, only one of which contains $p$. Hence, $\tilde{\alpha} \sim \tilde{\beta}$ and therefore $\alpha \sim \beta$ in $\Sigma_1 \setminus \{p\}$.
\end{proof}

\begin{cor}\label{cor:genus1togenus2}
    Let $\delta, \gamma_1, \dots, \gamma_k \subset \Sigma_2$ be a system of curves such that $\delta$ is separating and $\gammak{k} \subset \Sigma_2 \setminus \delta$. Then 
    \[\ngamma{k} \geq \min \left\{\crn{\left(k-l;1\right)}+\crn(l;1) \mid l=0,\dots,k \right\}.\]
\end{cor}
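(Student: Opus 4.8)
The plan is to reduce the corollary to a statement about curve systems on a disjoint union of two tori and then apply the genus-1 results from Section~\ref{section:genus1}. Since $\delta$ is separating, $\Sigma_2 \setminus \delta$ has two connected components $T_1$ and $T_2$, each homeomorphic to a punctured torus; fix punctured tori $\Sigma_1 \setminus \{p_1\}$ and $\Sigma_1 \setminus \{p_2\}$ and embeddings $T_1 \hookrightarrow \Sigma_1$, $T_2 \hookrightarrow \Sigma_1$ filling in the punctures. Partition the curves $\gamma_1, \dots, \gamma_k$ according to which component they lie in: say $l$ of them lie in $T_1$ and $k-l$ in $T_2$ (curves lying in the other component get relabelled). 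Each such curve is non-contractible in its component — hence, after capping the puncture, non-contractible and non-peripheral in the corresponding torus (a curve homotopic to $p_i$ would have been contractible in $\Sigma_2 \setminus \delta$, which we have excluded).

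The key point is that the geometric intersection numbers do not decrease under this operation, so it suffices to control the relabelled systems in the two tori. Intersection numbers of curves in different components are zero on both sides, so $\crn(\gamma_1, \dots, \gamma_k)$ equals the sum of the crossing numbers of the two sub-systems, each computed within its punctured-torus component. Now I invoke the preceding lemma: two non-contractible, non-peripheral simple closed curves in $\Sigma_1 \setminus \{p\}$ are homotopic in the punctured torus iff they are homotopic in $\Sigma_1$. Hence the curves in $T_1$, viewed in $\Sigma_1$, remain pairwise non-homotopic, and likewise for $T_2$; moreover the number of intersection points of two curves in $\Sigma_1 \setminus \{p\}$ is at least their geometric intersection number in $\Sigma_1$, so the crossing number of each sub-system, viewed in the closed torus, is bounded below by $\crn(l;1)$ resp.\ $\crn(k-l;1)$. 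Combining, $\crn(\gamma_1, \dots, \gamma_k) \geq \crn(l;1) + \crn(k-l;1)$ for the particular split $l$ realised by $\delta$, and taking the minimum over all $l \in \{0, \dots, k\}$ gives the stated bound (this minimum is a valid lower bound since the actual $l$ is one of the values ranged over).

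The main subtlety — and really the only thing that needs care — is the claim that capping off the puncture does not identify two of the curves and does not decrease intersection numbers; this is exactly what the previous lemma supplies, so the corollary is essentially a bookkeeping consequence of it together with the additivity of crossing numbers across the two components and the definition of $\crn(\cdot\,;1)$ as a minimum over all systems of the given size. One should also note the degenerate cases $l=0$ or $l=k$ (all curves in one component) are handled by the convention $\crn(0;1)=0$, which is consistent with the empty system; and that we never need $\delta$ itself to be non-contractible for the inequality, though in applications it will be.
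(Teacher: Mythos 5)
Your argument is correct and is exactly the one the paper intends: the corollary is stated there without a written proof, as an immediate consequence of the preceding lemma, via precisely this splitting into the two components, capping the punctures, and invoking the lemma to keep the capped curves pairwise non-homotopic while intersection numbers do not decrease. One small correction to your parenthetical: a $\gamma_i$ that is peripheral in its component is \emph{not} contractible in $\Sigma_2 \setminus \delta$ --- it is excluded because it would be homotopic to $\delta$ in $\Sigma_2$, contradicting the standing assumption that the system $\delta, \gamma_1, \dots, \gamma_k$ consists of pairwise non-homotopic curves.
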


\begin{lem}\label{lem:mainestimation}
    Let $k\leq 12$ and let $\gammak{k} \subset \Sigma_2$ be a system of simple closed curves such that $\gamma_1$ is separating. Set $m =\#\{\gamma_i \mid 2\leq i \leq k\text{, } i(\gamma_1, \gamma_i) \geq 1\}$. Then
    \begin{enumerate}[label = (\roman*)]
        \item $\ngamma{k} \geq 2m + \crn(k-1)$, \label{eq:mlarge}
        \item $\ngamma{k} \geq \crn(m) + m(k-m-1) + \ntorus{k-m-1}$. \label{eq:k-m-1}
    \end{enumerate}
\end{lem}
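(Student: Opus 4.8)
The plan is to bound $\ngamma{k}$ in both parts by splitting the pairs $\{i,j\}$, $1\le i<j\le k$, into four groups and estimating each separately: the pairs involving $\gamma_1$; the pairs inside the set $\mathcal M$ of the $m$ curves meeting $\gamma_1$; the pairs inside the set $\mathcal D$ of the remaining $k-1-m$ curves (which are disjoint from $\gamma_1$); and the mixed pairs with one curve in $\mathcal M$ and one in $\mathcal D$.

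Part \ref{eq:mlarge} is quick. Since $\gamma_1$ is separating, every curve that crosses it must cross back, so meets it an even, hence at least two, number of times; thus the pairs involving $\gamma_1$ contribute at least $2m$. The curves $\gamma_2,\dots,\gamma_k$ form a system of $k-1$ curves in $\Sigma_2$, so the remaining pairs contribute at least $\crn(k-1)$. Adding gives \ref{eq:mlarge}.

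For part \ref{eq:k-m-1}, since $\gamma_1$ is non-contractible and separating, $\Sigma_2\setminus\gamma_1=A\sqcup B$ with $A,B$ once-punctured tori; write $\mathcal D=\mathcal D_A\sqcup\mathcal D_B$ according to which piece each curve lies in, with $a=|\mathcal D_A|$, $b=|\mathcal D_B|$, $a+b=k-m-1$. The pairs inside $\mathcal M$ contribute at least $\crn(m)$, as $\mathcal M$ is a system of $m$ curves. The pairs inside $\mathcal D$ contribute at least $\ntorus{k-m-1}$: by Corollary~\ref{cor:genus1togenus2} they contribute at least $\min_l\{\crn(k-m-1-l;1)+\crn(l;1)\}$, and since $\crn(\cdot;1)$ has non-decreasing successive differences on the relevant range (from the values in Section~\ref{section:genus1}, together with Corollary~\ref{cor:count} for the larger indices) this minimum is attained at the balanced split $l=\lfloor(k-m-1)/2\rfloor$. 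Finally I claim that for each fixed $\mu\in\mathcal M$,
\[
i(\gamma_1,\mu)+\sum_{\nu\in\mathcal D}i(\mu,\nu)\ \ge\ 2+(a-1)+(b-1)\ =\ a+b\ =\ k-m-1 ,
\]
so that the pairs involving $\gamma_1$ or of mixed type contribute at least $m(k-m-1)$ in total; adding the three bounds (these groups of pairs being disjoint) gives \ref{eq:k-m-1}.

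The one genuinely topological step, and the one I expect to require the most care, is the inequality $\sum_{\nu\in\mathcal D_A}i(\mu,\nu)\ge a-1$ (and its analogue for $B$) appearing in the displayed line, i.e.\ the assertion that at most one curve of $\mathcal D_A$ can be disjoint from $\mu$. To prove it, put $\mu$ in minimal position with $\gamma_1$; since $\mu$ crosses $\gamma_1$, both $\mu\cap A$ and $\mu\cap B$ are non-empty unions of essential arcs of $A$, respectively $B$ (a boundary-parallel arc of $\mu\cap A$ would bound an innermost bigon with $\gamma_1$). For a curve $\nu$ contained in the incompressible subsurface $A$ one has $i(\mu,\nu)=\sum_j i_A(\alpha_j,\nu)$, where $\alpha_1,\dots,\alpha_t$ are the arcs of $\mu\cap A$; and in a once-punctured torus the essential simple closed curves disjoint from a fixed essential arc $\alpha_j$ form a single isotopy class, namely the core of the annulus $A\setminus\alpha_j$. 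Hence $i(\mu,\nu)=0$ forces $\nu$ to be isotopic to this core curve for every $j$; since the curves of $\mathcal D_A$ are pairwise non-isotopic, at most one of them is disjoint from $\mu$. Checking the two ingredients — the subsurface intersection identity and the innermost-bigon bookkeeping — is the crux; the rest is assembling bounds from the earlier lemmas.
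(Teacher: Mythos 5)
Your proof is correct and follows essentially the same route as the paper: part \ref{eq:mlarge} via the parity of intersections with a separating curve, and part \ref{eq:k-m-1} via the same four-way splitting of pairs, with Corollary~\ref{cor:genus1togenus2} and the balanced-split computation handling the curves disjoint from $\gamma_1$, and the key topological point being that at most one curve in each once-punctured torus can be disjoint from a curve crossing $\gamma_1$ (which the paper also proves by cutting the punctured torus along an essential arc of $\mu$ to get an annulus with a unique essential core). Your folding of the $2m$ term into the mixed-pair count to produce $m(k-m-1)$ directly is only a cosmetic rearrangement of the paper's bookkeeping.
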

\begin{proof}
    For easier notation, let $\Gamma = \{\gamma_2, \dots, \gamma_k\}$, ${M=\{\gamma_i \in \Gamma \mid i(\gamma_1, \gamma_i) \geq 1\}}$, so that $m=\#M$. \\
    \ref{eq:mlarge}: Since $\gamma_1$ is separating, any other curve has an even number of intersection points with $\gamma_1$, in particular, $i(\gamma_1, \gamma_i)\geq2$ for any $\gamma_i \in M$. This yields
    \begin{align}
        \ngamma{k} &= \sum_{i=2}^{k} i(\gamma_1, \gamma_i) + \crn(\Gamma) \nonumber \\
        &= \sum_{\gamma_i \in M} i(\gamma_1, \gamma_i) + \crn(\Gamma) \nonumber \\
        &\geq 2m + \crn(\Gamma).\label{eq:rewrite1}
    \end{align}
    Clearly $\crn(\Gamma)\geq \crn(k-1)$, which implies \ref{eq:mlarge}.\\
    \ref{eq:k-m-1}: We further rewrite $\crn(\Gamma)$:
    \begin{align}
        \crn(\Gamma) &= \crn(M) + \crn(\Gamma \setminus M) + \sum_{\gamma_i \in M} \sum_{\gamma_j \in \Gamma \setminus M} i(\gamma_i, \gamma_j).\label{eq:rewrite2}
    \end{align}
    We give lower bounds for all terms individually. First, we consider $\Gamma \setminus M$. The curves in $\Gamma \setminus M$ are contained in $\Sigma_2 \setminus \gamma_1$, we may therefore apply Corollary~\ref{cor:genus1togenus2}. We have
    \begin{align*}
        \crn(\Gamma \setminus M) \geq \min \left\{\crn{\left(\#(\Gamma \setminus M)-l;1\right)}+\crn(l;1) \mid l=0,\dots,\#(\Gamma \setminus M) \right\}.
    \end{align*}
    Using the values of $\crn(k;1)$ for $k\leq 6$ together with $\crn(7;1)\geq \left \lceil \frac{7 \cdot 24}{5} \right \rceil = 34$ and $\crn(8;1) \geq \left \lceil \frac{8 \cdot 34}{6} \right \rceil = 46$, given by Corollary~\ref{cor:count}, a simple computation shows that the above minimum is attained for $l= \left \lceil \frac{\#(\Gamma \setminus M)}{2} \right \rceil$, i.e.\ when the curves in $\Gamma \setminus M$ are split as evenly as possible between the two connected components of $\Sigma_2 \setminus \gamma_1$. Noting that $\#(\Gamma \setminus M) = k-m-1$, we get:
     \[ \crn(\Gamma \setminus M) \geq \ntorus{k-m-1}.\]
    Next, let us consider any curve $\gamma_i \in M$. The curve $\gamma_i$ may be disjoint from one curve in $\Gamma \setminus M$ in each connected component of $\Sigma_2 \setminus \gamma_1$, but must intersect all others at least once. This can be seen as follows: The closure of each connected component of $\Sigma_2 \setminus \gamma_1$ is homeomorphic to a torus with one boundary component. Since $\gamma_i \cap \gamma_1 \neq \emptyset$, the restriction of $\gamma_i$ to one of the connected components corresponds to at least one (non-trivial) simple arc with endpoints on the boundary. The complement of this arc in that connected component is homeomorphic to an annulus, thus contains exactly one simple closed curve. This is the only curve in that connected component that may be disjoint from $\gamma_i$. It follows that
    \[\sum_{\gamma_j \in \Gamma\setminus M} i(\gamma_i, \gamma_j) \geq \#(\Gamma \setminus M) - 2 = k-m-3,\] 
    and therefore,
    \[\sum_{\gamma_i \in M} \sum_{\gamma_j\in \Gamma \setminus M} i(\gamma_i, \gamma_j) \geq m(k-m-3).\]
    Finally, $\crn(M) \geq \crn(m)$. Together with \eqref{eq:rewrite1} and \eqref{eq:rewrite2} these inequalities yield \ref{eq:k-m-1}.
\end{proof}

\begin{prop}\label{prop:genus2_89}
    For $k = 8,9$, the system of curves $\delta_1, \dots, \delta_k$ given in Figure~\ref{fg:genus2_11curves} is optimal, i.e.\ \begin{align*}
        \crn(8)&=\ndelta{8}=10, \\
        \crn(9)&=\ndelta{9}=14.
    \end{align*}
\end{prop}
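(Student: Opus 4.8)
The plan is to combine the upper bounds from $\eqref{upperbounds:nk}$ with matching lower bounds derived from Lemma~\ref{lem:mainestimation}. By Corollary~\ref{cor:pop}, a minimal system $\delta_1,\dots,\delta_k$ (for $k=8,9$) contains a separating curve; after relabelling we may assume $\gamma_1$ is separating and apply Lemma~\ref{lem:mainestimation} with $m = \#\{\gamma_i \mid i(\gamma_1,\gamma_i)\geq 1\}$. The strategy is then a case distinction on $m$: for each value of $m \in \{0,1,\dots,k-1\}$, one of the two bounds \ref{eq:mlarge} or \ref{eq:k-m-1} should yield a value $\geq \ndelta{k}$, forcing equality. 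Intuitively, when $m$ is large, bound \ref{eq:mlarge} dominates (since each of the $m$ intersecting curves contributes at least $2$, plus $\crn(k-1)$ which we already know); when $m$ is small, bound \ref{eq:k-m-1} dominates (since then $k-m-1$ curves live in the two punctured tori and rack up intersections among themselves via the $\ntorus{k-m-1}$ term).

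First I would record the values we already have: $\crn(7)=6$, $\crn(8)=10$ (once proven, for use in the $k=9$ case), and the torus values $\crn(l;1)$ for $l\leq 6$ from Section~\ref{section:genus1}, so that the quantity $\ntorus{j} = \crn(\lceil j/2\rceil;1) + \crn(\lfloor j/2\rfloor;1)$ can be tabulated for $j = 0,\dots,8$: namely $0,0,1,3,6,10,14,17,21$ (using $\crn(7;1)\geq 34$ if needed, though for $k\leq 9$ we have $j\leq 8$). Then for $k=8$: bound \ref{eq:mlarge} gives $\crn(8)\geq 2m + \crn(7) = 2m+6$, which is $\geq 10$ as soon as $m\geq 2$. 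So it remains to rule out $m=0$ and $m=1$ using bound \ref{eq:k-m-1}. For $m=0$: $\crn(8)\geq \crn(0) + 0 + \ntorus{7} = 0 + 0 + 17 > 10$. For $m=1$: $\crn(8)\geq \crn(1) + 1\cdot 6 + \ntorus{6} = 0 + 6 + 14 = 20 > 10$. Hence $\crn(8)=10$. The $k=9$ case runs identically: \ref{eq:mlarge} gives $\crn(9)\geq 2m+\crn(8) = 2m+10 \geq 14$ for $m\geq 2$; and for $m=0$, \ref{eq:k-m-1} gives $\geq \ntorus{8} = 21 > 14$; for $m=1$, $\geq \crn(1) + 7 + \ntorus{7} = 0+7+17 = 24 > 14$. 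So $\crn(9)=14$.

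The only genuine subtlety — and the step I expect to need the most care — is verifying that the minimum in the $\crn(\Gamma\setminus M)$ estimate is indeed attained at the balanced split $l = \lceil \#(\Gamma\setminus M)/2\rceil$, i.e.\ that $\ntorus{j}$ is the right expression; but this is precisely the "simple computation" already carried out inside the proof of Lemma~\ref{lem:mainestimation}, so I may invoke it directly. A second minor point is the edge case where $k-m-1$ is small (e.g.\ $1$, $2$, or negative), where terms like $k-m-3$ in \ref{eq:k-m-1} could be negative; but since we only need \ref{eq:k-m-1} for $m\leq 1$ here, we always have $k-m-1 \geq 6$, so no degeneracy arises. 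Thus the proof reduces to the short arithmetic case-check above, and the main work was already done in establishing Lemma~\ref{lem:mainestimation} and Corollary~\ref{cor:pop}.
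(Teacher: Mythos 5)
Your overall strategy is exactly the paper's: take the upper bounds from (\ref{upperbounds:nk}), use Corollary~\ref{cor:pop} to produce a separating curve $\gamma_1$, and run a case distinction on $m$ via Lemma~\ref{lem:mainestimation}, with part~\ref{eq:mlarge} disposing of $m\geq 2$ and part~\ref{eq:k-m-1} of $m=0,1$. (The paper phrases the small-$m$ cases as contradictions with the upper bound so as to force $m\leq 2$ and ultimately $m=0$, a structural fact it reuses in Proposition~\ref{prop:uniqueness11}; for the present proposition your weaker check that every case yields at least the upper bound suffices.)

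The one concrete problem is that your table of $\ntorus{j}$ is wrong for every $j\geq 2$. Since $\crn(1;1)=0$, $\crn(2;1)=1$, $\crn(3;1)=3$, $\crn(4;1)=7$, the correct values for $j=0,\dots,8$ are $0,0,0,1,2,4,6,10,14$, not $0,0,1,3,6,10,14,17,21$. In particular $\ntorus{7}=\crn(4;1)+\crn(3;1)=10$ and $\ntorus{8}=2\crn(4;1)=14$, so in the decisive case $m=0$ your bound reads $\crn(8)\geq 10$ and $\crn(9)\geq 14$ with \emph{equality} against the upper bound, not the strict inequalities ``$17>10$'' and ``$21>14$'' you assert. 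Strict inequality there is in fact impossible: the minimal systems $\delta_1,\dots,\delta_k$ themselves have $m=0$, so this case must be attained with equality. The $m=1$ cases come out as $0+6+\ntorus{6}=12>10$ for $k=8$ and $0+7+\ntorus{7}=17>14$ for $k=9$. All four corrected values still meet or exceed the required bounds, so your argument does close and the proposition follows; but as written the proof asserts false numerical identities, and the inflated table would also misrepresent the structure of minimisers (suggesting the bound is tight at $m=2$ rather than at $m=0$), which matters downstream for the uniqueness statement.
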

\begin{proof}
    Let $\gamma_1, \dots, \gamma_k$ be such that $\ngamma{k}=\crn(k)$. By Corollary~\ref{cor:pop}, we may assume that $\gamma_1$ is separating. Set ${m =\#\{\gamma_i \mid 2\leq i \leq k \text{, } i(\gamma_1, \gamma_i) \geq 1\}}$.\\
    $k=8$: If $m \geq 3$, then, by Lemma~\ref{lem:mainestimation}~\ref{eq:mlarge},
    \begin{align*}
        \crn(8) &= \crn(\gammak{8}) \\
        &\geq 2m+\crn(7) \\
        &\geq 12,
    \end{align*}
    contradicting the upper bound $\crn(8)\leq10$ given in (\ref{upperbounds:nk}). Therefore, $m\leq 2$ and, using Lemma~\ref{lem:mainestimation}~\ref{eq:k-m-1}, we obtain
    \begin{align*}
        \crn(8) &= \crn(\gammak{8}) \\
        &\geq
        \begin{cases}
            \crn(3;1)+\crn(4;1) &\text{ if $m=0$,} \\
            6+2\crn(3;1) &\text{ if $m=1$,} \\
            2\cdot 5 + \crn(2;1)+\crn(3;1) &\text{ if $m=2$,}
        \end{cases}
        \\
        &= 
        \begin{cases}
            10 &\text{ if $m=0$,} \\
            12 &\text{ if $m=1$,} \\
            14 &\text{ if $m=2$.}
        \end{cases}
    \end{align*}
    For $m=1$ and $m=2$, this is a contradiction to the known upper bound, so $m=0$ and $\crn(8) = 10$.
    \\
    $k=9$: Again, if $m \geq 3$, then 
    \begin{align*}
        \crn(9) &= \crn(\gammak{9}) \\
        &\geq 2m+\crn(8) \\
        &\geq 16,
    \end{align*}
    by Lemma~\ref{lem:mainestimation}~\ref{eq:mlarge}. This contradicts $\crn(9)\leq14$, thus, $m\leq 2$. An application of Lemma~\ref{lem:mainestimation}~\ref{eq:k-m-1} yields
    \begin{align*}
        \crn(9) &= \crn(\gammak{9}) \\
        &\geq
        \begin{cases}
            2\crn(4;1) &\text{ if $m=0$,} \\
            7+\crn(3;1)+ \crn(4;1) &\text{ if $m=1$,} \\
            2\cdot 6 + 2\crn(3;1) &\text{ if $m=2$,}
        \end{cases}
        \\
        &= 
        \begin{cases}
            14 &\text{ if $m=0$,} \\
            17 &\text{ if $m=1$,} \\
            18 &\text{ if $m=2$.}
        \end{cases}
    \end{align*}
    For $m=1$ and $m=2$, this contradicts the known upper bound, so $m=0$ and $\crn(9) = 14$.
\end{proof}

In order to establish that the systems $\gammak{k}$ are minimal for $k=10,11$, we need to improve the lower bound given in Lemma~\ref{lem:nopop} for the crossing number of $k$ curves not containing any three pairwise disjoint curves. 

\begin{lem}\label{lem:nopop89}
    Let $\gammak{9} \subset \Sigma_2$ be curves such that no three curves among them are pairwise disjoint. Then 
    \begin{enumerate}[label = (\roman*)]
        \item $\ngamma{8} \geq 14$, \label{eq8:lem:nopop89}
        \item $\ngamma{9} \geq 21$. \label{eq9:lem:nopop89}
    \end{enumerate}
\end{lem}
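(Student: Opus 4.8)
The plan is to refine the edge-counting argument of Lemma~\ref{lem:nopop} by accounting for the \emph{weights} (geometric intersection numbers) on the edges of the intersection graph, not merely their existence. Let $G$ be the unweighted intersection graph on vertices $\{1,\dots,9\}$ (resp.\ a $8$-vertex subgraph). Since no three of the $\gamma_i$ are pairwise disjoint, the complement $\bar G$ is triangle-free, so by Turán $\bar G$ has at most $\lfloor k^2/4\rfloor$ edges; hence $G$ has at least $\binom{k}{2}-\lfloor k^2/4\rfloor$ edges, which gives $\#E \geq 16$ for $k=8$ and $\#E \geq 20$ for $k=9$. This already yields \ref{eq8:lem:nopop89} for $k=8$ if every edge has weight $\geq 1$ (indeed $\#E\geq 16$), so the case $k=8$ is essentially the bare Turán bound; I would double-check the arithmetic $\binom{8}{2}-16 = 28-16 = 12$ --- wait, this gives only $12$, not $14$. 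So the bound $\#E \geq 16$ is \emph{false}; rather $\#E \geq \binom82-\lfloor 64/4\rfloor = 28-16 = 12$. Thus even for $k=8$ we genuinely need to find extra intersection points beyond counting edges.

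So the real content is: in a $9$-curve (resp.\ $8$-curve) system on $\Sigma_2$ with triangle-free complement, either there are strictly more edges than the Turán extremal count, or some edges must carry weight $\geq 2$, or the curves cannot all be non-separating/non-homotopic simultaneously. Concretely, I would argue as follows. First, by Corollary~\ref{cor:genus1togenus2} and Lemma~\ref{lem:mainestimation}, if the system contained a separating curve we would get strong lower bounds; the hypothesis here is the opposite regime (no three pairwise disjoint curves), so I expect separating curves to be scarce or absent, and I would try to show that a system realizing the Turán bound $\#E = \binom k2 - \lfloor k^2/4\rfloor$ with all weights $1$ forces a very rigid structure --- the complement $\bar G$ is a complete bipartite graph $K_{a,b}$ with $a+b = k$, $\{a,b\}$ as balanced as possible, meaning $G = K_a \sqcup K_b$ joined by nothing, i.e.\ the curves split into two cliques $A$, $B$ of pairwise-intersecting curves with every curve of $A$ disjoint from every curve of $B$. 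The curves in $A$ are pairwise intersecting simple closed curves on $\Sigma_2$; likewise $B$. I would then invoke known bounds on such ``pairwise intersecting'' families (e.g.\ a family of pairwise non-disjoint, pairwise non-homotopic simple closed curves on $\Sigma_2$ with all pairwise intersection $1$ has bounded size --- at most $2g+1 = 5$ by Malestein--Rivin--Theran, cited in the introduction) to get a contradiction with the sizes $a,b$ forced by balance, or else to force some weight-$2$ edge.

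The key steps, in order: (1) set up the weighted intersection graph and record the Turán edge bound; (2) show that attaining equality with all unit weights forces $\bar G = K_{a,b}$, hence $G$ is a disjoint union of two cliques $A\sqcup B$ of pairwise-intersecting curves; (3) bound $|A|,|B|$ using the structure of $\Sigma_2$ --- in particular that among $A$ the curves pairwise intersect, so if additionally all those intersections are single points then $|A|\leq 5$, and in general a clique of size $\geq 6$ must contain a pair intersecting at least twice, contributing extra crossings; (4) combine: for $k=9$ a balanced split is $\{4,5\}$ or $\{3,6\}$ etc., and track whether the clique of size $\geq 6$ or the imbalance forces enough surplus to reach $14$ (for $k=8$) and $21$ (for $k=9$); (5) handle the near-extremal cases (one more edge than Turán, or one weight-$2$ edge) by the same bookkeeping. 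The main obstacle I anticipate is step (3)/(4): controlling cliques of pairwise-intersecting curves on $\Sigma_2$ precisely enough, since ``pairwise intersecting'' is much weaker than ``$1$-system,'' and I would likely need an auxiliary lemma bounding the crossing number of $t$ pairwise-intersecting curves on $\Sigma_2$ from below (presumably proved separately in the paper, perhaps via Euler-characteristic or Corollary~\ref{cor:count}-type averaging), which I would cite or prove en route. A secondary subtlety is that the complement need not be \emph{exactly} extremal, so the argument must degrade gracefully: each missing edge of $\bar G$ relative to $K_{a,b}$ is an extra intersecting pair in $G$ contributing $\geq 1$, which I would trade off against any relaxation in the clique bounds.
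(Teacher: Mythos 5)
Your diagnosis that the bare Tur\'an count falls short is right (though note the arithmetic slip: for $k=9$ it gives $\binom{9}{2}-\lfloor 81/4\rfloor = 16$ edges, not $20$), and your identification of the extremal structure --- two cliques $A$, $B$ of pairwise-intersecting curves with no edges between them --- is also correct. But the mechanism you propose for extracting the missing crossings does not close the gap. Bounding $\crn(A)+\crn(B)$ using only the fact that each clique consists of pairwise-intersecting curves cannot reach $21$: by Malestein--Rivin--Theran there exist $5$ curves on $\Sigma_2$ pairwise intersecting exactly once, so a clique of size $5$ can a priori have crossing number exactly $10$ and a clique of size $4$ exactly $6$, totalling $16$. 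The auxiliary lemma you say you would need (a lower bound for pairwise-intersecting families on $\Sigma_2$ beating the trivial $\binom{t}{2}$) is false for $t\leq 5$ considered in isolation. The constraint that actually forces the extra crossings is the \emph{disjointness between the two cliques}: the paper shows that two disjoint intersecting pairs (one from each clique) force the existence of a separating curve $\delta$ disjoint from them, and then curves disjoint from $\delta$ live in one of two one-holed tori, so Corollary~\ref{cor:genus1togenus2} and the torus values $\crn(4;1)=7$, $\crn(5;1)=14$ apply, giving $7+14=21$ in the cleanest case (with a short case analysis over the number $m\leq 5$ of curves meeting $\delta$). This separating-curve construction is the key idea missing from your plan.

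The second gap is the complementary regime. Your plan treats deviations from the extremal Tur\'an configuration by local bookkeeping (``each missing edge of $\bar G$ \dots''), but the paper's dichotomy is cleaner and you would need something like it: either some four curves form two disjoint intersecting pairs (handled as above), or no four curves do, in which case every $4$-subset has crossing number at least $3$ (any intersection graph on $4$ vertices with fewer than $3$ edges either contains an independent triple or two disjoint intersecting pairs), and Lemma~\ref{lem:count} bootstraps this to $5,8,12,16,21$ for $5,\dots,9$ curves. Without this second branch your argument has no control over triangle-free complements that are far from complete bipartite.
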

\begin{proof}
    We prove (ii), the proof of (i) is analogous. Set $\Gamma = \{\gamma_1, \dots, \gamma_9\}$. We consider two cases. First, assume that there are four distinct curves $\gamma_{i_1}, \dots, \gamma_{i_4} \in \Gamma$ such that $i(\gamma_{i_1}, \gamma_{i_2}), i(\gamma_{i_3}, \gamma_{i_4})\geq 1$ and $i(\gamma_{i_j}, \gamma_{i_k}) =0$ for ${j=1,2}$ and $k=3,4$. Let us call this property ($\star$). Observe that ($\star$) implies that $\gamma_{i_1}, \dots, \gamma_{i_4}$ are non-separating. Thus, $\Sigma_2 \setminus (\gamma_{i_1} \cup \gamma_{i_3})$ is homeomorphic to a sphere with four punctures, and $\gamma_{i_2}, \gamma_{i_4}$ correspond to (nonempty) collections of simple arcs, each connecting two of the punctures. Hence, one of the connected components of $\Sigma_2 \setminus (\gamma_{i_1} \cup \dots \cup \gamma_{i_4})$ is an annulus. Let $\delta$ be a non-trivial simple closed curve contained in that annulus. Observe that by construction, $\delta \subset \Sigma_2$ is a separating curve disjoint from $\gamma_{i_1}, \dots, \gamma_{i_4}$ and is not homotopic to any of the curves in $\Gamma$. Set $M=\{\gamma \in \Gamma \mid i(\gamma, \delta) \neq 0\}$ and $m=\#M$. By construction of $\delta$, $m \leq 5$. We adapt the proof of Lemma~\ref{lem:mainestimation} slightly to obtain a lower bound for the crossing number depending on $m$. We start by rewriting the crossing number:
    \begin{align*}
        \ngamma{9} = \crn(M) + \crn(\Gamma \setminus M) + \sum_{\gamma_i \in M} \sum_{\gamma_j \in \Gamma \setminus M} i(\gamma_i, \gamma_j).
    \end{align*}
    By assumption, there are no three pairwise disjoint curves among $\gammak{9}$, therefore,
    \begin{align}
        \crn(M) \geq 
        \begin{cases}
            0 &\text{ if $m \leq 2$,} \\
            \left\lceil \frac{m(m-2)}{4} \right\rceil &\text{ if $m \geq 3$,}
        \end{cases} \label{eq:nM}
    \end{align}
    by Lemma~\ref{lem:nopop}.
    Moreover, the curves in $\Gamma \setminus M$ are contained in $\Sigma_2 \setminus \delta$, so we may apply Corollary~\ref{cor:genus1togenus2}. Together with the values of $\crn(k;1)$ determined in the previous chapter, this gives rise to the following lower bound for the crossing number:
    \begin{align*}
        \crn(\Gamma \setminus M) \geq  \ntorus{9-m}.
    \end{align*}
    Since there are no three pairwise disjoint curves in $\Gamma$, any $\gamma_i\in M$ intersects all but possibly one curve of $\Gamma \setminus M$, i.e.\
    \begin{align*}
        \sum_{\gamma_i \in M} \sum_{\gamma_j \in \Gamma \setminus M} i(\gamma_i, \gamma_j) &\geq m(8-m).
    \end{align*}
    Finally, using all of the above inequalities, we obtain:
    \begin{align*}
        \ngamma{9} &\geq \crn(M) + \ntorus{9-m} + m(8-m) \\
        & \overset{\mathclap{\eqref{eq:nM}}}{\geq} 
        \begin{cases}
            \crn(5; 1) + \crn(4; 1) &\text{ if $m=0$,} \\
            2\crn(4; 1) + 7 &\text{ if $m=1$,} \\
            \crn(4; 1) + \crn(3; 1) + 12 &\text{ if $m=2$,} \\
            1 + 2\crn(3; 1) + 15 &\text{ if $m=3$,} \\
            2+ \crn(3; 1) + \crn(2; 1) + 16 &\text{ if $m=4$,} \\
            4 + 2\crn(2; 1) + 15 &\text{ if $m=5$,} 
        \end{cases} \\
        &= 
        \begin{cases}
            21 &\text{ if $m=0$,} \\
            21 &\text{ if $m=1$,} \\
            22 &\text{ if $m=2$,} \\
            22 &\text{ if $m=3$,} \\
            22 &\text{ if $m=4$,} \\
            21 &\text{ if $m=5$,} 
        \end{cases} \\
        &\geq 21.
    \end{align*}
    For the second case, assume that there are no four curves in $\Gamma$ satisfying ($\star$). Consider any choice of $1\leq i_1 < \dots < i_4 \leq 9$. Looking at possible intersection graphs of $\gammai{4}$, it is easy to see that $\ngammai{4} \geq 3$ since any intersection graph with less than 3 edges contradicts either the assumption that $\Gamma$ contains no three pairwise disjoint curves or the assumption that $\gammai{4}$ do not satisfy ($\star$). Applying Lemma~\ref{lem:count} recursively, we get
    \begin{align*}
        \ngammai{5} &\geq \left \lceil \frac{5 \cdot 3}{3} \right \rceil = 5, \text{ for all $1 \leq i_1 < \dots < i_5 \leq9$,}\\
        \ngammai{6} &\geq \left \lceil \frac{6 \cdot 5}{4} \right \rceil = 8, \text{ for all $1 \leq i_1 < \dots < i_6 \leq9$,}\\
        \ngammai{7} &\geq \left \lceil \frac{7 \cdot 8}{5} \right \rceil = 12, \text{ for all $1 \leq i_1 < \dots < i_7 \leq9$,}\\
        \ngammai{8} &\geq \left \lceil \frac{8 \cdot 12}{6} \right \rceil = 16, \text{ for all $1 \leq i_1 < \dots < i_8 \leq9$,}
    \end{align*}
    and therefore
    \begin{equation*}
        \ngamma{9} \geq \left \lceil \frac{9 \cdot 16}{7} \right \rceil =21. \qedhere
    \end{equation*}
\end{proof}

\begin{cor}\label{cor:nopop_101112}
    Let $\gammak{12} \subset \Sigma_2$ be curves such that no three curves among them are pairwise disjoint. Then 
    \begin{enumerate}[label = (\roman*)]
        \item $\ngamma{10} \geq 27$,
        \item $\ngamma{11} \geq 33$,
        \item $\ngamma{12} \geq 40$.
    \end{enumerate}
\end{cor}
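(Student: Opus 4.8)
\emph{Proof plan.} The idea is to bootstrap the bound of Lemma~\ref{lem:nopop89} upward by repeated application of Lemma~\ref{lem:count}. The observation that makes this work is that the hypothesis — no three of the curves $\gammak{12}$ are pairwise disjoint — is inherited by every subsystem. In particular, every $9$-element subsystem has no three pairwise disjoint curves, so Lemma~\ref{lem:nopop89}~\ref{eq9:lem:nopop89} applies to it and gives crossing number at least $21$ for any nine of the twelve curves.

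For part (i), I would apply Lemma~\ref{lem:count} to $\gammak{10}$ with $m = 9$ and $n = 21$, using that every $9$-element subsystem has crossing number at least $21$:
\[
\ngamma{10} \geq \left\lceil \frac{10 \cdot 9}{9 \cdot 8}\cdot 21 \right\rceil = \left\lceil \frac{1890}{72} \right\rceil = 27 .
\]
Note that the same argument, applied to an arbitrary system of ten curves with no three pairwise disjoint, shows that every such $10$-element system has crossing number at least $27$.

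For parts (ii) and (iii), the point requiring care is that one must iterate one curve at a time rather than jump straight from nine curves to twelve: a single application of Lemma~\ref{lem:count} with $m = 9$, $n = 21$ would yield only $\lceil 132 \cdot 21 / 72 \rceil = 39$ for twelve curves, which is too weak. Instead, since every $10$-element subsystem of the eleven (resp.\ twelve) curves has crossing number $\geq 27$ by the argument of (i), applying Lemma~\ref{lem:count} to $\gammak{11}$ with $m = 10$, $n = 27$ gives
\[
\ngamma{11} \geq \left\lceil \frac{11 \cdot 10}{10 \cdot 9}\cdot 27 \right\rceil = \left\lceil \frac{2970}{90} \right\rceil = 33 ,
\]
which is (ii); and then every $11$-element subsystem has crossing number at least $33$, so applying Lemma~\ref{lem:count} to $\gammak{12}$ with $m = 11$, $n = 33$ gives
\[
\ngamma{12} \geq \left\lceil \frac{12 \cdot 11}{11 \cdot 10}\cdot 33 \right\rceil = \left\lceil \frac{4356}{110} \right\rceil = 40 ,
\]
which is (iii).

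There is no genuine obstacle here — all the real content is already in Lemma~\ref{lem:nopop89} — and the only subtlety is the order of the induction, since it is precisely the rounding up at each intermediate stage (from $26.25$ to $27$, and from $39.6$ to $40$) that upgrades the final bound from the weak $39$ to the sharp value $40$.
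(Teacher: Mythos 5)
Your proof is correct and is exactly the paper's argument: the paper's proof reads ``Use Lemma~\ref{lem:nopop89}~\ref{eq9:lem:nopop89} and apply Lemma~\ref{lem:count} three times,'' which is precisely your one-curve-at-a-time iteration $21 \to 27 \to 33 \to 40$. Your remark that the intermediate rounding is what makes the final bound $40$ rather than $39$ correctly identifies why the iteration must proceed stepwise.
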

\begin{proof}
    Use Lemma~\ref{lem:nopop89} \ref{eq9:lem:nopop89} and apply Lemma~\ref{lem:count} three times.
\end{proof}

For $k\geq 10$, the existence of a separating curve in a minimal system is no longer given by Corollary~\ref{cor:pop}. However, a closer look at the crossing number of systems of non-separating curves reveals that this is still true for $k=10, 11$.

\begin{lem}\label{lem:nonsep}
    For $k = 10, 11$, any minimal system of $k$ curves in $\Sigma_2$ contains a separating curve.
\end{lem}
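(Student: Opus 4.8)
The plan is to argue by contradiction: assume that for $k\in\{10,11\}$ there is a minimal system $\gamma_1,\dots,\gamma_k$ in $\Sigma_2$ in which \emph{every} curve is non-separating, and derive that $\crn(\gamma_1,\dots,\gamma_k)$ exceeds the upper bound for $\crn(k)$ recorded in \eqref{upperbounds:nk}.

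\textbf{Step 1: extracting disjoint triples.} By \eqref{upperbounds:nk} we have $\crn(10)\le 21<27$ and $\crn(11)\le 28<33$, so Corollary~\ref{cor:nopop_101112} forces the system to contain three pairwise disjoint curves. Starting from $\{\gamma_1,\dots,\gamma_k\}$, I repeatedly delete a triple of pairwise disjoint curves as long as one exists; this produces pairwise disjoint index triples $T_1,\dots,T_r$ (each $T_i$ a triple of pairwise disjoint curves of the system) and a leftover set $R=\{\gamma_1,\dots,\gamma_k\}\setminus(T_1\cup\dots\cup T_r)$ which, by construction, contains no three pairwise disjoint curves. Since the whole system contains such a triple, $r\ge 1$; and $|R|=k-3r$, so $r\in\{1,2,3\}$ in both cases.

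\textbf{Step 2: the key estimate.} Since all curves are non-separating, so are those of each $T_i$; by the observation used in the proof of Corollary~\ref{cor:pop} (three pairwise disjoint non-separating curves in $\Sigma_2$ cut it into two pairs of pants, so a curve disjoint from all three would be peripheral, hence homotopic to one of them), every curve of the system not lying in $T_i$ meets $\bigcup_{\gamma\in T_i}\gamma$ in at least two points. Splitting $\crn(\gamma_1,\dots,\gamma_k)$ according to whether a pair lies inside $R$, inside a single $T_i$, between two triples $T_i,T_{i'}$, or between a triple $T_i$ and $R$, I find that pairs inside a $T_i$ contribute $0$, a pair of triples contributes at least $6$ (each of the three curves of $T_{i'}$ meets $\bigcup T_i$ at least twice), and the $T_i$--$R$ pairs for a fixed $i$ contribute at least $2|R|$, so that
\[
\crn(\gamma_1,\dots,\gamma_k)\ \ge\ \crn(R)+6\binom{r}{2}+2|R|\,r ,\qquad |R|=k-3r .
\]
As $R$ has no three pairwise disjoint curves, I bound $\crn(R)$ below by Lemma~\ref{lem:nopop}, except when $|R|=8$, where I use the sharper Lemma~\ref{lem:nopop89}~\ref{eq8:lem:nopop89} giving $\crn(R)\ge 14$.

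\textbf{Step 3: case analysis and conclusion.} It then remains to run through $r\in\{1,2,3\}$. For $k=10$ one has $|R|=7,4,1$ with $\crn(R)\ge 9,2,0$, yielding lower bounds $23,24,24$; for $k=11$ one has $|R|=8,5,2$ with $\crn(R)\ge 14,4,0$, yielding $30,30,30$. In every case this strictly exceeds the upper bound for $\crn(k)$ in \eqref{upperbounds:nk}, contradicting minimality, so a separating curve must occur. The only genuinely delicate point is the topological input in Step 2 — that three disjoint non-separating curves form a pants decomposition of $\Sigma_2$, so a fourth curve crosses their union at least twice — but this is exactly what is already used in Corollary~\ref{cor:pop}; the rest is the bookkeeping above. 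Note that the crude Turán bound of Lemma~\ref{lem:nopop} is \emph{not} sufficient for $k=11$, $r=1$ (it would give only $12+16=28$), which is why Lemma~\ref{lem:nopop89} is needed there.
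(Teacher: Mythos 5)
Your proof is correct and follows essentially the same strategy as the paper's: assume all curves are non-separating, extract triples of pairwise disjoint curves via Corollary~\ref{cor:nopop_101112} and Lemmas~\ref{lem:nopop}/\ref{lem:nopop89}, use the fact that any curve meets a disjoint non-separating triple in at least two points, and contradict the upper bounds in \eqref{upperbounds:nk}. The only difference is organizational — you extract all disjoint triples greedily and run a flat case analysis over their number $r$, whereas the paper nests the argument two levels deep — and your arithmetic, including the observation that Lemma~\ref{lem:nopop89}~\ref{eq8:lem:nopop89} is genuinely needed in the $k=11$, $r=1$ case, checks out.
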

\begin{proof}
    For $k=10,11$, let $\gammak{k} \subset \Sigma_2$ be a minimal system. Recall that $\crn(10) \leq 21$ and $\crn(11) \leq 28$. By Corollary~\ref{cor:nopop_101112} we may therefore assume that $\gamma_1, \gamma_2, \gamma_3$ are pairwise disjoint. Assume towards a contradiction that all curves are non-separating. Since $\gamma_1, \gamma_2, \gamma_3$ are pairwise disjoint and non-separating, $i(\gamma_1, \gamma_i)+i(\gamma_2, \gamma_i)+i(\gamma_3, \gamma_i)\geq 2$ for $i=4,\dots, k$ and therefore
    \begin{equation}
        \ngamma{k} \geq 2(k-3) + \crn(\gamma_4, \dots , \gamma_k) \label{eq:nonsep_pop}.
    \end{equation}
    We have a closer look at the crossing number of $\gamma_4, \dots, \gamma_k$.
    \\
    $k=10$: We know $\ngamma{10} = \crn(10) \leq 21$, so inequality \eqref{eq:nonsep_pop} implies
    \begin{equation}
        \crn(\gamma_4, \dots, \gamma_{10}) \leq 7. \label{eq:remaining7}
    \end{equation}
    By Lemma~\ref{lem:nopop}, we may therefore assume that $\gamma_4, \gamma_5, \gamma_6$ are pairwise disjoint. But the same argument as above yields 
    \begin{align*}
        \crn(\gamma_4, \dots , \gamma_{10}) &\geq 2\cdot 4 + \crn(\gamma_7, \dots, \gamma_{10}) \\
        &\geq 8+\crn(4) \\
        &= 9,
    \end{align*}
    which is a contradiction to \eqref{eq:remaining7}. \\
    $k=11$: Since $\ngamma{11} = \crn(11) \leq 28$, from inequality \eqref{eq:nonsep_pop} it follows that
    \begin{equation}
        \crn(\gamma_4, \dots, \gamma_{11}) \leq 12. \label{eq:remaining8}
    \end{equation}
    By Lemma~\ref{lem:nopop89}\ref{eq8:lem:nopop89}, we may thus assume that also $\gamma_4, \gamma_5, \gamma_6$ are pairwise disjoint. For the remaining five curves $\gamma_7, \dots, \gamma_{11}$, there are two possibilities. Depending on whether or not they contain yet another triple of pairwise disjoint curves, we get $\crn(\gamma_7, \dots, \gamma_{11}) \geq 4$, with the same reasoning as above or by Lemma~\ref{lem:nopop}. Hence, 
    \begin{align*}
        \crn(\gamma_4, \dots , \gamma_{11}) &\geq 2\cdot 5 + \crn(\gamma_7, \dots, \gamma_{11}) \\
        &\geq 14,
    \end{align*}
    contradicting \eqref{eq:remaining8}. 
\end{proof}

\begin{prop}\label{prop:genus2_1011}
    For $k = 10, 11$, the system of curves $\delta_1, \dots, \delta_k$ given in Figure~\ref{fg:genus2_11curves} is optimal, i.e.\
    \begin{align*}
        \crn(10) &= \ndelta{10} = 21, \\
        \crn(11) &= \ndelta{11} = 28.
    \end{align*}
\end{prop}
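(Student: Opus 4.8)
The plan is to match the upper bounds $\crn(10)\le 21$ and $\crn(11)\le 28$ recorded in \eqref{upperbounds:nk} with the corresponding lower bounds, running essentially the argument of Proposition~\ref{prop:genus2_89}. I would handle $k=10$ first and $k=11$ second, because the estimate for eleven curves will feed on the value $\crn(10)=21$. In each case I start from a minimal system $\gamma_1,\dots,\gamma_k$; by Lemma~\ref{lem:nonsep} I may assume $\gamma_1$ is separating, and I set $m=\#\{\gamma_i \mid 2\le i\le k,\ i(\gamma_1,\gamma_i)\ge 1\}$ as in Lemma~\ref{lem:mainestimation}, recalling $\crn(0)=\crn(1)=\crn(2)=\crn(3)=0$ and $\crn(2;1)=1$, $\crn(3;1)=3$, $\crn(4;1)=7$, $\crn(5;1)=14$.

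The first step is to eliminate large $m$ via Lemma~\ref{lem:mainestimation}~\ref{eq:mlarge}: for $k=10$, $m\ge 4$ forces $\crn(10)\ge 2m+\crn(9)\ge 8+14=22$, and for $k=11$, $m\ge 4$ forces $\crn(11)\ge 2m+\crn(10)\ge 8+21=29$, both contradicting the upper bound, so $m\le 3$ in both cases. The second step is to treat $m\in\{0,1,2,3\}$ with Lemma~\ref{lem:mainestimation}~\ref{eq:k-m-1}. For $k=10$ this gives
\[
    \crn(10) \ge
    \begin{cases}
        \ntorus{9} = 21 & \text{if } m=0,\\
        8+\ntorus{8} = 22 & \text{if } m=1,\\
        14+\ntorus{7} = 24 & \text{if } m=2,\\
        18+\ntorus{6} = 24 & \text{if } m=3,
    \end{cases}
\]
and for $k=11$
\[
    \crn(11) \ge
    \begin{cases}
        \ntorus{10} = 28 & \text{if } m=0,\\
        9+\ntorus{9} = 30 & \text{if } m=1,\\
        16+\ntorus{8} = 30 & \text{if } m=2,\\
        21+\ntorus{7} = 31 & \text{if } m=3.
    \end{cases}
\]
In each table the rows $m=1,2,3$ exceed the upper bound and are discarded, while the row $m=0$ yields exactly the desired lower bound, so together with \eqref{upperbounds:nk} we conclude $\crn(10)=21$ and then $\crn(11)=28$.

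I do not expect a genuine obstacle here: the whole argument is arithmetic bookkeeping resting on Lemmas~\ref{lem:nonsep} and \ref{lem:mainestimation}. The two points that require care are that the branch $m=0$ must \emph{not} be read as a contradiction — it is precisely the extremal case realising the minimum, namely when no curve crosses the separating curve and the remaining $k-1$ curves are split as evenly as possible across the two complementary punctured tori — and that the order of the two cases matters, since Lemma~\ref{lem:mainestimation}~\ref{eq:mlarge} applied with $k=11$ needs the already-established value $\crn(10)=21$ as an input.
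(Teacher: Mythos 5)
Your proposal is correct and follows essentially the same route as the paper: invoke Lemma~\ref{lem:nonsep} to produce a separating curve, rule out $m\geq 4$ via Lemma~\ref{lem:mainestimation}~\ref{eq:mlarge} using $\crn(9)=14$ and then $\crn(10)=21$, and dispose of $m=1,2,3$ via Lemma~\ref{lem:mainestimation}~\ref{eq:k-m-1}, with the $m=0$ branch giving exactly the matching lower bounds $21$ and $28$. All of your case values agree with the paper's, and your two cautionary remarks (that $m=0$ is the extremal case rather than a contradiction, and that $k=10$ must be settled before $k=11$) reflect exactly how the argument is organised there.
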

\begin{proof}
    For $k=10,11$, let $\gammak{k}$ be a minimal system of $k$ curves.
    Recall that $\ngamma{10}\leq 21$ and $\ngamma{11}\leq 28$. By Lemma~\ref{lem:nonsep}, at least one of the curves must be separating, let us therefore assume that $\gamma_1$ is separating. As in the proof of Proposition~\ref{prop:genus2_89} set $\Gamma = \{\gamma_2, \dots, \gamma_k\}$, ${M=\{\gamma_i \in \Gamma \mid i(\gamma_i, \gamma_1)\neq0\}}$ and $m=\#M$. 
    
    First, consider $k=10$. If $m\geq4$, then by Lemma~\ref{lem:mainestimation}~\ref{eq:mlarge}
    \begin{align*}
        \ngamma{10} &\geq 2m + \crn(9) \\
        &\geq 22,
    \end{align*}
    contrary to $\crn(10) \leq 21$, so $m \leq 3$. By Lemma~\ref{lem:mainestimation}~\ref{eq:k-m-1},
   \allowdisplaybreaks
    \begin{align*} 
        \ngamma{10} &\geq m(9-m)+\crn(m)+ \ntorus{9-m}
        \\
        &= \begin{cases}
            \crn(5;1)+\crn(4;1) &\text{ if $m=0$,}\\
            8 + 2\crn(4;1) &\text{ if $m=1$,}\\
            14 + \crn(4;1)+\crn(3;1) &\text{ if $m=2$,}\\
            18 + 2\crn(3;1) &\text{ if $m=3$,}
        \end{cases} \\
        &= \begin{cases}
            21 &\text{ if $m=0$,}\\
            22 &\text{ if $m=1$,}\\
            24 &\text{ if $m=2$,}\\
            24 &\text{ if $m=3$,}
        \end{cases}        
    \end{align*}
    so $m=0$ and $\crn(10)=21$.

    Let us now consider $k=11$. If $m\geq4$, then
    \begin{align*}
        \ngamma{11} &\geq 2m + \crn(10) \\
        &\geq 29,
    \end{align*}
    by Lemma~\ref{lem:mainestimation}~\ref{eq:mlarge}. This contradicts $\crn(11)\leq 28$, therefore $m \leq 3$. By Lemma~\ref{lem:mainestimation} \ref{eq:k-m-1},
    \begin{align*}
        \ngamma{11} &\geq m(10-m)+\crn(m)+ \ntorus{10-m}
        \\
        &= \begin{cases}
            2\crn(5;1) &\text{ if $m=0$,}\\
            9 + \crn(5;1)+\crn(4;1) &\text{ if $m=1$,}\\
            16 + 2\crn(4;1) &\text{ if $m=2$,}\\
            21 + \crn(4;1)+\crn(3;1) &\text{ if $m=3$,}
        \end{cases} \\
        &= \begin{cases}
            28 &\text{ if $m=0$,}\\
            30 &\text{ if $m=1$,}\\
            30 &\text{ if $m=2$,}\\
            31 &\text{ if $m=3$,}
        \end{cases}        
    \end{align*}
    so $m=0$ and $\crn(11)=28$.
\end{proof}

The proofs of Proposition~\ref{prop:genus2_89} and Proposition~\ref{prop:genus2_1011} give sufficient structural information about minimal systems that uniqueness is an almost immediate consequence. Note that this is not the case for $k \leq 3$ since there are topologically inequivalent triples of pairwise disjoint curves.

\begin{prop} \label{prop:uniqueness11}
    For $4\leq k \leq 11$, minimal systems of $k$ curves in $\Sigma_2$ are unique up to homeomorphisms of the surface and isotopies of curves.
\end{prop}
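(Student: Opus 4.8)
The plan is to show that every minimal system $\gammak{k}$ with $4 \le k \le 11$ is, up to a homeomorphism of $\Sigma_2$ and isotopies of the curves, the same ``model'' system, which depends only on $k$; uniqueness then follows by transitivity. The model consists of a non-contractible separating curve $c$, splitting $\Sigma_2$ into two one-holed tori $T_1, T_2$, together with a minimal torus system of $\lceil\frac{k-1}{2}\rceil$ curves inside $T_1$ and one of $\lfloor\frac{k-1}{2}\rfloor$ curves inside $T_2$ (in the standard position $w_1, w_2, \dots$ of \eqref{curves_torus} after capping off the boundary). Both ingredients, the separating curve and each torus system, are unique up to homeomorphism, so the model is well defined.

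The first task is to recognise every minimal system as an instance of this model. By Corollary~\ref{cor:pop} (for $k \le 9$) and Lemma~\ref{lem:nonsep} (for $k = 10, 11$) a minimal system contains a separating curve; call it $\gamma_1$ and let $m$ be as in Lemma~\ref{lem:mainestimation}. The proofs of Propositions~\ref{prop:genus2_89} and \ref{prop:genus2_1011} show $m = 0$ when $8 \le k \le 11$; for $4 \le k \le 7$ one runs the identical dichotomy --- part~\ref{eq:mlarge} of Lemma~\ref{lem:mainestimation} rules out large $m$ and part~\ref{eq:k-m-1} rules out the remaining small values of $m$, against the known values $\crn(4) = 1,\ \crn(5)=2,\ \crn(6)=4,\ \crn(7)=6$ --- so again $m = 0$. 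With $m = 0$ the curves $\gamma_2, \dots, \gamma_k$ lie in $\Sigma_2 \setminus \gamma_1 = T_1 \sqcup T_2$ and $\ngamma{k} = \crn(\gamma_2, \dots, \gamma_k) = \crn(k)$ equals the minimum of Corollary~\ref{cor:genus1togenus2} applied to these $k-1$ curves. Using the values of $\crn(j;1)$ for $j \le 5$ from Lemmas~\ref{lem:torus_leq4} and \ref{lem:torus_5curves}, together with $\crn(6;1) = 24$ and the bounds on $\crn(j;1)$, $j \ge 7$, coming from Corollary~\ref{cor:count}, one checks that this minimum is attained \emph{only} at the balanced split $\{\lceil\frac{k-1}{2}\rceil, \lfloor\frac{k-1}{2}\rfloor\}$; unwinding the proof of Corollary~\ref{cor:genus1togenus2}, equality then forces the curves in $T_1$ (resp.\ $T_2$), capped off to a torus, to form a minimal system of $\lceil\frac{k-1}{2}\rceil \le 5$ (resp.\ $\lfloor\frac{k-1}{2}\rfloor$) curves. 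These curves remain essential and pairwise non-homotopic in the torus by the lemma preceding Corollary~\ref{cor:genus1togenus2}, so the hypotheses of Lemmas~\ref{lem:torus_leq4} and \ref{lem:torus_5curves} apply.

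It remains to glue the pieces. Any two non-contractible separating curves of $\Sigma_2$ are related by a homeomorphism, so, comparing two minimal systems, we may assume both use the same curve $c$ and --- after applying, if needed, a homeomorphism of $\Sigma_2$ exchanging the two sides of $c$ --- that both put $\lceil\frac{k-1}{2}\rceil$ of their curves in $T_1$. By Lemmas~\ref{lem:torus_leq4} and \ref{lem:torus_5curves}, the two minimal torus systems inside $T_1$ differ by some $A \in \slz = \mathrm{MCG}(\widehat{T_1})$ once the boundary is capped; since $\mathrm{MCG}(T_1, \partial T_1) \to \mathrm{MCG}(\widehat{T_1})$ is surjective, $A$ is realised by a homeomorphism $f_1$ of $T_1$ fixing $\partial T_1 = c$ pointwise and carrying one system to curves isotopic in $T_1$ --- hence in $\Sigma_2$ --- to the other. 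Doing the same on $T_2$ and gluing $f_1$, $f_2$ and $\mathrm{id}_c$ produces a homeomorphism of $\Sigma_2$ matching the two systems up to isotopy.

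The routine steps are the verification of $m = 0$ for $k \le 7$ and the numerics showing the split must be balanced. The delicate point --- and the one I expect to require the most care --- is the gluing: passing an $\slz$-equivalence of capped-off curves to a boundary-fixing homeomorphism of the one-holed torus, and knowing that isotopy and non-homotopy of the relevant simple closed curves are detected the same way in $T_i$, in $\widehat{T_i}$ and in $\Sigma_2$ (all three facts being standard and already implicit in Corollary~\ref{cor:genus1togenus2} and the lemma preceding it).
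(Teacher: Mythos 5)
Your proposal is correct and follows essentially the same route as the paper: locate a separating curve via Corollary~\ref{cor:pop} and Lemma~\ref{lem:nonsep}, extract $m=0$ and the balanced split from the proofs of Propositions~\ref{prop:genus2_89} and \ref{prop:genus2_1011} (and the same dichotomy for $k\le 7$), and then transfer the genus-1 uniqueness of Lemmas~\ref{lem:torus_leq4} and \ref{lem:torus_5curves} to each one-holed torus. Your gluing step, phrased as surjectivity of $\mathrm{MCG}(T_1,\partial T_1)\to\mathrm{MCG}(\widehat{T_1})$, is the same standard fact the paper invokes by realising the generating Dehn twists away from the puncture.
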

\begin{proof}
    Let $4 \leq k \leq 11$ and let $\gammak{k}$ be a minimal system of $k$ curves. We prove that $\gammak{k}$ may be mapped to the minimal system $\delta_1, \dots, \delta_k$ (Figure~\ref{fg:genus2_11curves}) by a homeomorphism of $\Sigma_2$ and isotopies of curves.
    
    Recall that, by Lemma~\ref{cor:pop} and Lemma~\ref{lem:nonsep}, one of $\gammak{k}$ is separating. We thus assume that $\gamma_1$ is separating. After a homeomorphism and a homotopy, we may also assume that $\gamma_1 = \delta_1$, where $\delta_1$ is the separating curve in Figure~\ref{fg:genus2_11curves}. Further, for $8 \leq k \leq 11$, it follows from the proof of Proposition~\ref{prop:genus2_89} and Proposition~\ref{prop:genus2_1011} that $m= \#\{\gamma_i \mid 1\leq i \leq k \text{, } i(\gamma_1, \gamma_i) \neq 0\} = 0$, i.e.\ $\gamma_2, \dots, \gamma_k$ do not intersect $\gamma_1$. Moreover, it comes out of the proofs that $\ngamma{k}=\ntorus{k-1}$. A simple computation shows that this is only possible if $\gamma_2, \dots, \gamma_k$ are distributed as evenly as possible between the two connected components of $\Sigma_2 \setminus \gamma_1$. In fact, all of this also holds for $4\leq k \leq 7$, which is straightforward to prove following the same chain of arguments as for $8\leq k \leq 11$. This means that minimal systems consist of a separating curve $\gamma_1$, together with two subsystems of $\left \lceil \frac{k-1}{2} \right \rceil $ and $\left \lfloor \frac{k-1}{2} \right \rfloor $ curves, one in each connected component of $\Sigma_2 \setminus \gamma_1$, which are minimal in terms of the crossing number on the torus. The two connected components of $\Sigma_2 \setminus \gamma_1$ are homeomorphic to punctured tori, to which we would like to apply the uniqueness results from Chapter~\ref{section:genus1}. For this, recall that the mapping class group of the torus is generated by two Dehn twists about simple closed curves intersecting exactly once (Chapter 5 d in \cite{dehn1938gruppe}). Choosing these two curves outside a neighbourhood of the puncture, one sees that for each homeomorphism of the torus, there is a corresponding homeomorphism of the punctured torus. Thus, the uniqueness results from Chapter~\ref{section:genus1} transfer to each connected component of $\Sigma_2 \setminus \gamma_1$, together yielding a homeomorphism of $\Sigma_2$ mapping the curves in each subsystem to the curves corresponding to $w_i$ from \eqref{curves_torus} in Chapter~\ref{section:genus1}. Finally, note that the curves $\delta_i$ in each connected component of $\Sigma_2 \setminus \delta_1$ in Figure~\ref{fg:genus2_11curves} correspond to the curves $w_1, \dots, w_5$ given in \eqref{curves_torus}, yielding the desired result.
\end{proof}

\subsection{Minimal crossing number of 12 curves}

We consider the system of 12 curves in Figure~\ref{fg:genus2_12curves}. In total, there are 36 intersection points. Our goal is to prove that this system is optimal. Notice, how this system of curves looks significantly different than the minimal systems in the previous section: On the one hand, the system no longer contains a separating curve which divides the system into two subsystems. In fact, all curves are non-separating. In particular, this means that this system cannot be obtained from a minimal system of 11 curves by adding a curve. On the other hand, the intersection number of any two curves is at most one, whereas in a minimal system of 11 curves, there are pairs of curves intersecting up to three times. Malestein, Rivin and Theran consider systems of curves with pairwise at most one intersection point, so-called 1-systems, in \cite{Malestein_2013}. They prove that in a genus 2 surface, the maximal number of curves in a 1-system is 12. Thus, the minimal system in Figure~\ref{fg:genus2_12curves} is a maximal 1-system. In fact, we will see that any minimal system of 12 curves is a maximal 1-system.

\begin{figure}[ht] \centering 
        \includegraphics[width=0.8\textwidth]{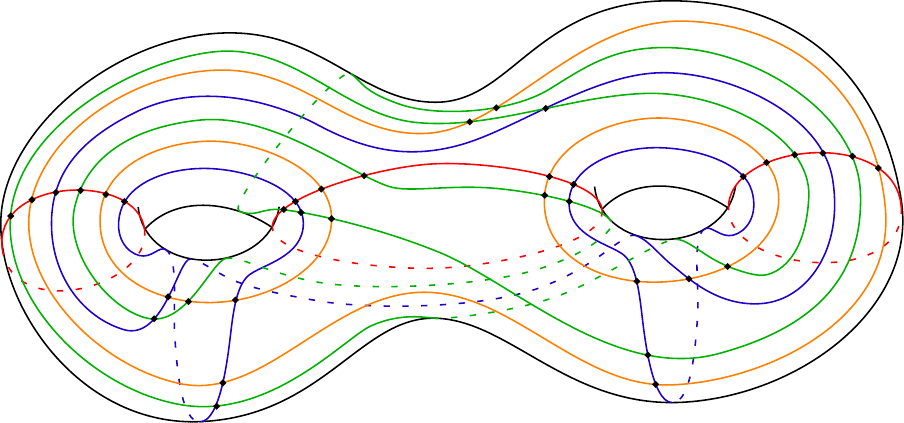}
        \caption{Minimal system of 12 curves}
        \label{fg:genus2_12curves}
\end{figure}

\begin{prop}\label{prop:genus2_12}
    The system of 12 curves in Figure~\ref{fg:genus2_12curves} is optimal, i.e.\ ${\crn(12)=36}$.
\end{prop}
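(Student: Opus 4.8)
The upper bound $\crn(12)\le 36$ is immediate from the system in Figure~\ref{fg:genus2_12curves}, whose curves are in minimal position, so it remains to prove $\crn(12)\ge 36$. Fix a minimal system $\gamma_1,\dots,\gamma_{12}$; the plan is to combine the counting results of Section~\ref{section:counting}, Lemma~\ref{lem:mainestimation}, Lemma~\ref{lem:nopop89} and one homological observation, proceeding by successively ruling out structural possibilities (each of which would force $\crn(12)\ge 38$ or $\ge 39$) until the surviving configuration forces $\crn(12)\ge 36$. The first step is to show that a minimal system of $12$ curves has no separating curve: if $\gamma_1$ were separating, with $m=\#\{\gamma_i\mid i(\gamma_1,\gamma_i)\ge 1\}$, then Lemma~\ref{lem:mainestimation}~\ref{eq:mlarge} gives $\crn(12)\ge 2m+\crn(11)=2m+28$, forcing $m\le 4$, while Lemma~\ref{lem:mainestimation}~\ref{eq:k-m-1} (inserting the values $\crn(k;1)$ for $k\le 6$ together with $\crn(4)=1$) yields $\crn(12)\ge 38$ for every $m\in\{0,1,2,3,4\}$; both contradict $\crn(12)\le 36$. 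Hence all twelve curves are non-separating.

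The new ingredient is the following. If $\delta_1,\delta_2,\delta_3\subset\Sigma_2$ are pairwise disjoint, pairwise non-homotopic and non-separating, then $[\delta_1]+[\delta_2]+[\delta_3]=0$ in $H_1(\Sigma_2;\Z/2\Z)$; consequently every essential simple closed curve $\gamma$ not homotopic to any $\delta_i$ satisfies $i(\gamma,\delta_1)+i(\gamma,\delta_2)+i(\gamma,\delta_3)\ge 2$. For the homological identity, note that $\delta_1,\delta_2,\delta_3$ form a pants decomposition, so $\Sigma_2\setminus(\delta_1\cup\delta_2\cup\delta_3)=P_1\sqcup P_2$ with $P_1,P_2$ pairs of pants; if some $\delta_i$ had both of its sides glued to the same $P_j$, then the relation $[\partial P_j]=0$ would read $[\delta_k]=0$ for the third boundary curve $\delta_k$ of $P_j$, contradicting that $\delta_k$ is non-separating, so each $P_j$ borders all three curves and $[\delta_1]+[\delta_2]+[\delta_3]=[\partial P_1]=0$. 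For the consequence, $\gamma$ cannot be disjoint from all three (it would be isotopic into $P_1$ or $P_2$, hence boundary-parallel, hence homotopic to some $\delta_i$), so the total intersection is positive; and it is even, since modulo $2$ it equals the intersection pairing of $\gamma$ with $[\delta_1]+[\delta_2]+[\delta_3]=0$. Thus it is at least $2$.

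With this, I would estimate $\crn(12)$ by peeling off pairwise disjoint triples. For families of curves $X,Y$ write $i(X,Y)=\sum_{\gamma\in X,\ \gamma'\in Y}i(\gamma,\gamma')$. Since $\crn(12)\le 36<40$, Corollary~\ref{cor:nopop_101112}~(iii) provides a pairwise disjoint triple; relabel it $A=\{\gamma_1,\gamma_2,\gamma_3\}$ and set $R=\{\gamma_4,\dots,\gamma_{12}\}$. As all curves are non-separating, the homological input gives $i(A,R)\ge 2\cdot 9=18$; and if $R$ contained no pairwise disjoint triple, Lemma~\ref{lem:nopop89}~\ref{eq9:lem:nopop89} would give $\crn(R)\ge 21$, forcing $\crn(12)=\crn(A)+\crn(R)+i(A,R)\ge 39$. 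So $R$ contains a pairwise disjoint triple $B=\{\gamma_4,\gamma_5,\gamma_6\}$; set $C=\{\gamma_7,\dots,\gamma_{12}\}$. Then
\[\crn(12)=\crn(A)+\crn(B)+\crn(C)+i(A,B)+i(A,C)+i(B,C),\]
where $\crn(A)=\crn(B)=0$, and the homological input applied to $A$ and to $B$ gives $i(A,B)\ge 6$, $i(A,C)\ge 12$, $i(B,C)\ge 12$. Finally $\crn(C)\ge 6$: if the six curves of $C$ contain no pairwise disjoint triple then $\crn(C)\ge\lceil 6\cdot 4/4\rceil=6$ by Lemma~\ref{lem:nopop}, and otherwise $C$ contains a pairwise disjoint triple $D$ and the homological input applied to $D$ gives $\crn(C)\ge i(D,C\setminus D)\ge 6$. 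Summing, $\crn(12)\ge 0+0+6+6+12+12=36$, which with the upper bound gives $\crn(12)=36$.

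The one genuinely non-routine point is the homological input — in particular the claim that three pairwise disjoint non-homotopic non-separating curves in $\Sigma_2$ satisfy $[\delta_1]+[\delta_2]+[\delta_3]=0$, equivalently that their two complementary pairs of pants each border all three curves. This is exactly what upgrades the trivial estimate ``$\gamma$ meets the triple at least once'' to ``at least twice'', and since the final count is tight at exactly $36$, this factor of two is precisely what makes the argument close. Everything else is a finite check against the already established values of $\crn(k;2)$ for $k\le 11$ and $\crn(k;1)$ for $k\le 6$.
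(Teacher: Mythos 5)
Your proposal is correct and follows essentially the same route as the paper: rule out separating curves via Lemma~\ref{lem:mainestimation}, then peel off pairwise disjoint triples using Corollary~\ref{cor:nopop_101112} and Lemma~\ref{lem:nopop89}, and close with the bound $18+12+6=36$. The only difference is that you supply a (correct) pants-decomposition/mod-2 homology justification for the inequality $\sum_{i}i(\gamma,\delta_i)\ge 2$, which the paper merely recalls from the proof of Corollary~\ref{cor:pop}.
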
    
\begin{proof}
    The system in Figure~\ref{fg:genus2_12curves} gives $\crn(12) \leq 36$. We prove that any minimal system $\gammak{12}$ satisfies $\ngamma{12}\geq36$. For this, we first prove that a minimal system contains no separating curve. Assume towards a contradiction that one of $\gammak{12}$ is separating, w.l.o.g.\ let it be $\gamma_1$. Set $\Gamma = \{\gamma_2, \dots, \gamma_{12}\}$, $M= \{ \gamma_i \in \Gamma \mid i(\gamma_1, \gamma_i) \neq 0 \}$ and $m=\#M$. By Lemma~\ref{lem:mainestimation}~\ref{eq:mlarge}, if $m\geq 5$, then
    \begin{align*}
        \ngamma{12} &\geq 2m + \crn(11) \\
        &\geq 38,
    \end{align*}
    contradicting $\ngamma{12}\leq 36$, so $m\leq 4$. Lemma~\ref{lem:mainestimation}~\ref{eq:k-m-1} yields
    \begin{align*}
        \ngamma{12} &\geq \crn(m)+m(11-m)+\ntorus{11-m} \\
        &= \begin{cases}
            \crn(6;1)+\crn(5;1) &\text{ if $m=0$,} \\
            10 + 2\crn(5;1) &\text{ if $m=1$,} \\
            18 + \crn(5;1)+\crn(4;1) &\text{ if $m=2$,} \\
            24 + 2\crn(4;1) &\text{ if $m=3$,} \\
            1 + 28 + \crn(4;1)+\crn(3;1) &\text{ if $m=4$,} 
        \end{cases} \\
        &= \begin{cases}
            38 &\text{ if $m=0$,} \\
            38 &\text{ if $m=1$,} \\
            39 &\text{ if $m=2$,} \\
            38 &\text{ if $m=3$,} \\
            39 &\text{ if $m=4$,} 
        \end{cases} \\
        &\geq 38,
    \end{align*}
    again contradicting $\ngamma{12} \leq 36$. Therefore, all curves must be non-separating. 
    
     Recall: If $\alpha_1, \dots, \alpha_k$ are non-separating curves and $\alpha_1, \alpha_2, \alpha_3$ are pairwise disjoint, then
    \begin{equation}
        \sum_{i=1}^{3} i(\gamma_i, \gamma_j) \geq 2 \label{eq:alpha}
    \end{equation}
    for $j=4, \dots, k$.
    We will apply this inequality several times.
    
    By Corollary~\ref{cor:nopop_101112}, we may assume that $\gamma_1, \gamma_2, \gamma_3$ are pairwise disjoint since $\ngamma{12} \leq 36$. We consider the crossing number of the remaining nine curves $\gamma_4, \dots, \gamma_{12}$. Inequality~\eqref{eq:alpha} yields
    \begin{align*}
        \crn(\gamma_4, \dots, \gamma_{12}) &= \ngamma{12} - \sum_{j=4}^{12} \sum_{i=1}^3 i(\gamma_i, \gamma_j) \\
        &\leq \ngamma{12} - 2\cdot 9 \\
        & \leq 36-18 \\
        &=18.
    \end{align*}
    Thus, by Lemma~\ref{lem:nopop89}~\ref{eq9:lem:nopop89}, we may also assume that $\gamma_4, \gamma_5, \gamma_6$ are pairwise disjoint. We consider the crossing number of the remaining six curves $\gamma_7, \dots, \gamma_{12}$. If there is yet another triple of pairwise disjoint curves among them, let us assume w.l.o.g.\ that they are $\gamma_7, \gamma_8, \gamma_9$, then
    \begin{align} \begin{split} \label{eq:12curves_6_pop}
    \crn(\gamma_7, \dots, \gamma_{12}) &= \sum_{j=10}^{12} \sum_{i=7}^9 i(\gamma_i, \gamma_j) + \crn(\gamma_{10}, \gamma_{11}, \gamma_{12}) \\
    &\geq \sum_{j=10}^{12} \sum_{i=7}^9 i(\gamma_i, \gamma_j)\\
    &\overset{\mathclap{\eqref{eq:alpha}}}{\geq} 3\cdot2 \\
    & =6.
    \end{split} \end{align}
    If there are no three pairwise disjoint curves among them, then 
    \begin{equation}
    \crn(\gamma_7, \dots, \gamma_{12}) \geq 6 \label{eq:12curves_6_nopop}
    \end{equation}
    by Lemma~\ref{lem:nopop}. Finally, using the above inequalities and applying \eqref{eq:alpha} twice, we get: 
    \begin{align}
        \ngamma{12} &= \sum_{j=4}^{12} \sum_{i=1}^{3} i(\gamma_i, \gamma_j) + \crn(\gamma_4, \dots, \gamma_{12}) \nonumber\\      
        &\geq  9\cdot 2 + \crn(\gamma_4, \dots, \gamma_{12}) \label{eq:12curves_1st_pop} \\
        &= 18 + \sum_{j=7}^{12} \sum_{i=4}^{6} i(\gamma_i, \gamma_j) + \crn(\gamma_7, \dots, \gamma_{12}) \nonumber\\
        &\geq 18 + 6 \cdot 2 + \crn(\gamma_7, \dots, \gamma_{12}) \label{eq:12curves_2nd_pop} \\
        &\geq 18+12+6 \label{eq:12curves_3rd_pop}\\
        &= 36. \nonumber \qedhere
    \end{align}
\end{proof}

The above proof gives a lot of information about what conditions minimal systems of 12 curves necessarily satisfy. We will make use of that to deduce that minimal systems of 12 curves are unique. For this, we study some of these properties more closely.

\begin{lem}\label{lem:inr1} \, 
     \begin{enumerate}[label=(\roman*)]
       \item \label{lem:inr1_item1} Let $\gamma_1, \gamma_2, \gamma_3, \delta_1, \delta_2 \subset \Sigma_2$ be a system of non-separating curves such that $\delta_1, \delta_2$ are disjoint, $\gamma_1, \gamma_2, \gamma_3$ are pairwise disjoint, and
         \begin{align*}
            \sum_{i=1}^3 i(\gamma_i, \delta_1)=\sum_{i=1}^3 i(\gamma_i, \delta_2)=2.
        \end{align*}
         Then all pairwise intersection numbers are at most~1.
        \item \label{lem:inr1_item2} Let $\gamma_1, \gamma_2, \gamma_3, \delta_1, \delta_2, \delta_3 \subset \Sigma_2$ be a systems of non-separating curves such that $\gamma_1, \gamma_2, \gamma_3$ and $\delta_1, \delta_2, \delta_3$ are pairwise disjoint and satisfy:
        \begin{align*}
            \sum_{i=1}^3 i(\gamma_i, \delta_j) = 2 
        \end{align*}
        for $j=1,2,3$. Then all pairwise intersection numbers are at most~1.
     \end{enumerate}
\end{lem}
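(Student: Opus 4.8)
\emph{Reduction.} First note that (ii) is an immediate consequence of (i): applying (i) to the five-curve subsystems $\{\gamma_1,\gamma_2,\gamma_3,\delta_1,\delta_2\}$ and $\{\gamma_1,\gamma_2,\gamma_3,\delta_1,\delta_3\}$ — each satisfying the hypotheses of (i), since the $\delta$'s are pairwise disjoint — yields $i(\gamma_i,\delta_j)\le 1$ for all $i\in\{1,2,3\}$, $j\in\{1,2,3\}$, and all other pairs are disjoint. So it suffices to prove (i). There every pair $\gamma_i,\gamma_j$ is disjoint and so is $\delta_1,\delta_2$, hence it suffices to show $i(\gamma_i,\delta_j)\le 1$ for all $i,j$. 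Since $\sum_{i}i(\gamma_i,\delta_j)=2$, the only obstruction is the pattern $i(\gamma_{i_0},\delta_j)=2$, $i(\gamma_i,\delta_j)=0$ for $i\ne i_0$; call $\delta_j$ of \emph{type~(B) at $\gamma_{i_0}$} in this case, and we must rule this out. For the bookkeeping, observe that because $\gamma_1,\gamma_2,\gamma_3$ are pairwise disjoint their homology classes span an isotropic subspace $V\subset H_1(\Sigma_2;\Z)$; since $[\gamma_1],[\gamma_2]$ are primitive and non-proportional (otherwise they would be disjoint and homologous up to sign, hence isotopic by the fact below), $\dim V=2$, and since $\gamma_1\cup\gamma_2$ is non-separating, $\Sigma_2\setminus(\gamma_1\cup\gamma_2)$ is a four-holed sphere, whose genus $0$ forces $[\gamma_3]\in V$.

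\emph{Two facts.} (MF) Two disjoint simple closed curves on $\Sigma_2$ with equal homology class (up to sign) are isotopic: homologous disjoint curves cobound an embedded subsurface $R$ with $\partial R=\alpha\cup\beta$, and $\chi(R)+\chi(\Sigma_2\setminus R)=-2$ with both pieces having two boundary circles forces one of them to have genus $0$, i.e.\ to be an annulus. Consequently, disjoint non-separating non-isotopic curves $\alpha,\beta$ have $\alpha\cup\beta$ non-separating (else they form a bounding pair, hence are homologous up to sign, hence isotopic), so $\Sigma_2\setminus(\alpha\cup\beta)$ is a four-holed sphere and $[\delta]\in\operatorname{span}([\alpha],[\beta])$ for every curve $\delta$ disjoint from $\alpha\cup\beta$.
(B2) If $\delta$ is of type~(B) at $\gamma_1$, then $\Sigma_2\setminus(\gamma_1\cup\delta)$ consists of two annuli with cores isotopic to $\gamma_2$ and $\gamma_3$; hence every essential simple closed curve disjoint from $\gamma_1\cup\delta$ is isotopic to $\gamma_2$ or $\gamma_3$. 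Indeed $\chi$ of a regular neighbourhood of $\gamma_1\cup\delta$ equals $-i(\gamma_1,\delta)=-2$, so its complement has $\chi=0$ and is a union of $k$ annuli with $1\le k\le 2$ (the complement is nonempty and the neighbourhood has genus $2-k$); $k=1$ is impossible, since $\gamma_2,\gamma_3$ lie in the complement and would then be isotopic; so $k=2$, the neighbourhood is a four-holed sphere, and $\gamma_2,\gamma_3$ realise the two core classes.

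\emph{Main argument for (i).} Suppose some $\delta_j$ is of type~(B); by symmetry assume $\delta_1$ is of type~(B) at $\gamma_1$. As $\delta_1$ is disjoint from $\gamma_2,\gamma_3$, the second part of MF gives $[\delta_1]\in V$, so $[\delta_1]\cdot[\gamma_i]=0$ for $i=1,2,3$ by isotropy of $V$. Distinguish cases by $i(\gamma_1,\delta_2)$. If $i(\gamma_1,\delta_2)=0$: then $\delta_2$ is disjoint from $\gamma_1\cup\delta_1$, so by (B2) $\delta_2$ is isotopic to $\gamma_2$ or $\gamma_3$, contradicting that the curves of a system are pairwise non-homotopic. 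If $i(\gamma_1,\delta_2)=2$: then $\delta_2$ is also of type~(B) at $\gamma_1$, hence disjoint from $\gamma_2,\gamma_3$; both $\delta_1,\delta_2$ then lie in the four-holed sphere $\Sigma_2\setminus(\gamma_2\cup\gamma_3)$ and are non-peripheral there (else isotopic to $\gamma_2$ or $\gamma_3$); but two disjoint non-peripheral curves in a four-holed sphere are isotopic (cut along one to obtain two pairs of pants, in each of which every curve is peripheral), so $\delta_1\sim\delta_2$, a contradiction. If $i(\gamma_1,\delta_2)=1$: then $i(\gamma_2,\delta_2)+i(\gamma_3,\delta_2)=1$, so $\delta_2$ is disjoint from one of $\gamma_2,\gamma_3$, say $\gamma_3$; as $\delta_2$ is also disjoint from $\delta_1$, and $\gamma_3,\delta_1$ are disjoint, non-separating and non-isotopic, MF gives $[\delta_2]\in\operatorname{span}([\gamma_3],[\delta_1])$; but $[\gamma_1]\cdot[\gamma_3]=0$ and $[\gamma_1]\cdot[\delta_1]=0$, so $[\gamma_1]\cdot[\delta_2]=0$, contradicting $i(\gamma_1,\delta_2)=1$ (which forces $[\gamma_1]\cdot[\delta_2]=\pm1$, since algebraic and geometric intersection numbers have the same parity and $|[\gamma_1]\cdot[\delta_2]|\le i(\gamma_1,\delta_2)$). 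All cases are contradictory, so no $\delta_j$ is of type~(B), whence $i(\gamma_i,\delta_j)\le 1$ for all $i,j$; together with the disjointness hypotheses this proves (i), and hence (ii).

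\emph{Main obstacle.} The heart of the argument is Fact~(B2) and checking that the three-way split on $i(\gamma_1,\delta_2)$ is exhaustive and that each branch closes; the homological setup (isotropy of $V$, parity of intersection numbers) is routine. The most delicate branch is $i(\gamma_1,\delta_2)=1$, where one genuinely needs MF applied to $\gamma_3\cup\delta_1$ together with the fact that its complement has genus $0$.
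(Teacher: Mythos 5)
Your proof is correct and reaches the stated conclusion, but it is organised differently from the paper's. Both arguments begin the same way: assume some $i(\gamma_{i_0},\delta_j)=2$, note that $\delta_j$ is then disjoint from the other two $\gamma$'s, and use the fact that $\Sigma_2\setminus(\gamma_{i_0}\cup\delta_j)$ is two annuli whose cores are those two $\gamma$'s to force the other $\delta$ to meet $\gamma_{i_0}$. From there the paper finishes in a single stroke: the other $\delta$, say $\delta_2$, must miss one of $\gamma_2,\gamma_3$ (say $\gamma_2$), so $\gamma_3$ and $\delta_2$ both lie in the four-holed sphere $\Sigma_2\setminus(\gamma_2\cup\delta_1)$, where two non-isotopic, non-peripheral essential curves intersect at least twice; hence $\sum_i i(\gamma_i,\delta_2)\ge 1+2=3$, a contradiction that disposes of $i(\gamma_1,\delta_2)=1$ and $i(\gamma_1,\delta_2)=2$ simultaneously. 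You instead split on the value of $i(\gamma_1,\delta_2)$, handling $=2$ by an isotopy argument in $\Sigma_2\setminus(\gamma_2\cup\gamma_3)$ and $=1$ by the homological parity argument ($[\delta_2]$ lies in the isotropic span of $[\gamma_3]$ and $[\delta_1]$, so $[\gamma_1]\cdot[\delta_2]=0$, incompatible with $i(\gamma_1,\delta_2)=1$). Both routes close; yours trades the paper's single four-holed-sphere intersection bound for a homological computation, which is a pleasant alternative but costs one extra case. One point to tighten in (B2): you assert that the complement of $N(\gamma_1\cup\delta_1)$ is a union of $k\le 2$ annuli, but for two non-separating curves meeting twice in minimal position the complement can genuinely contain a disk (a square face) — for instance a disk plus a one-holed torus. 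What excludes this here is not the Euler-characteristic count alone but the presence of $\gamma_2$ and $\gamma_3$: two disjoint, non-isotopic curves that are non-separating in $\Sigma_2$ cannot both live in a one-holed torus (or a single annulus), so only the two-annulus configuration survives. The paper states the same two-annuli fact with comparable brevity, so this is a matter of polish rather than a gap.
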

\begin{proof}
    \ref{lem:inr1_item1}: It suffices to prove that $i(\gamma_i, \delta_j) \leq 1$ for $i=1,2,3$ and $j=1,2$ since all other intersection numbers are 0. By assumption, $i(\gamma_i, \delta_1)\leq2$ and $i(\gamma_i, \delta_2) \leq 2$ for $i=1,2,3$, so assume towards a contradiction that one of the intersection numbers is 2, w.l.o.g.\ $i(\gamma_1, \delta_1)=2$. By the conditions on the sum of the intersection numbers, it follows that $i(\gamma_2, \delta_1)=i(\gamma_3, \delta_1)=0$. Since $\gamma_1$ and $\delta_1$ are non-separating, intersect exactly twice and are disjoint from $\gamma_2$ and $\gamma_3$, $\Sigma_2\setminus (\gamma_1 \cup \delta_1)$ is the disjoint union of two annuli containing the curves $\gamma_2$ and $\gamma_3$. Therefore, $\delta_2$ cannot lie in $\Sigma_2\setminus (\gamma_1 \cup \delta_1)$ and hence must intersect $\gamma_1$ since $\delta_2 \cap \delta_1 = \emptyset$. By the conditions on the sum of intersection numbers, it follows that either $i(\gamma_2, \delta_2)=0$ or $i(\gamma_3, \delta_2)=0$. Assume w.l.o.g\ that $i(\gamma_2, \delta_2)=0$. Then $\gamma_3, \delta_2 \subset \Sigma_2 \setminus (\gamma_2 \cup \delta_1)$, which is homeomorphic to a sphere with four removed discs since $\gamma_2$ and $\delta_1$ are non-separating and $\gamma_2 \cap \delta_1 = \emptyset$. It is straightforward to see that in a sphere with four boundary components any two distinct non-trivial simple closed curves (not homotopic to the boundary) intersect and are separating. Thus, $i(\gamma_3, \delta_2)\geq2$ and therefore $\sum_{i=1}^3 i(\gamma_i, \delta_2) \geq 3$, contradicting the conditions given in the lemma.\\
    \ref{lem:inr1_item2}: Apply \ref{lem:inr1_item1} to $\gamma_1, \gamma_2, \gamma_3$ and $\delta_1, \delta_2$ and to $\gamma_1, \gamma_2, \gamma_3$ and $\delta_2, \delta_3$.
\end{proof}

\begin{cor}
    Any system of 12 curves with minimal crossing number is a maximal 1-system, i.e.\ all pairwise intersection numbers are at most~1.
\end{cor}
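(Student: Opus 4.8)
The plan is to read off enough structure from the proof of Proposition~\ref{prop:genus2_12} and then feed the resulting configurations into Lemma~\ref{lem:inr1}\ref{lem:inr1_item1}. Let $\gamma_1,\dots,\gamma_{12}$ be a minimal system, so $\ngamma{12}=36$. That proof shows all twelve curves are non-separating and, after relabelling, that $\gamma_1,\gamma_2,\gamma_3$ and $\gamma_4,\gamma_5,\gamma_6$ are each pairwise disjoint; and since $\ngamma{12}=36$, every inequality in the chain \eqref{eq:12curves_1st_pop}--\eqref{eq:12curves_3rd_pop} is an equality, so that $\sum_{i=1}^{3}i(\gamma_i,\gamma_j)=2$ for $4\le j\le 12$, $\sum_{i=4}^{6}i(\gamma_i,\gamma_j)=2$ for $7\le j\le 12$, and $\crn(\gamma_7,\dots,\gamma_{12})=6$. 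One then argues, following the case split of that proof, according to whether $\gamma_7,\dots,\gamma_{12}$ contain a pairwise disjoint triple. The recurring tool in both cases is the following consequence of Lemma~\ref{lem:inr1}\ref{lem:inr1_item1}: if $T$ is a pairwise disjoint triple of non-separating curves and $\gamma_b,\gamma_c$ are disjoint curves with $\sum_{\gamma\in T}i(\gamma,\gamma_b)=\sum_{\gamma\in T}i(\gamma,\gamma_c)=2$, then $i(\gamma,\gamma_b)\le 1$ for all $\gamma\in T$.

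In the first case, where (after relabelling) $\gamma_7,\gamma_8,\gamma_9$ are also pairwise disjoint, put $T_1=\{\gamma_1,\gamma_2,\gamma_3\}$, $T_2=\{\gamma_4,\gamma_5,\gamma_6\}$, $T_3=\{\gamma_7,\gamma_8,\gamma_9\}$, $T_4=\{\gamma_{10},\gamma_{11},\gamma_{12}\}$. Since the curves within each of $T_1,T_2,T_3$ are disjoint,
\[
36=\ngamma{12}=\sum_{1\le r<s\le 3}\sum_{\gamma\in T_r,\,\gamma'\in T_s}i(\gamma,\gamma')+\sum_{r=1}^{3}\sum_{\gamma\in T_4,\,\gamma'\in T_r}i(\gamma,\gamma')+\crn(T_4),
\]
where \eqref{eq:alpha} makes the first term at least $3\cdot 6=18$ and the second at least $9\cdot 2=18$; hence both are exactly $18$ and $\crn(T_4)=0$, so $T_4$ too is a pairwise disjoint triple, and tracing these equalities with \eqref{eq:alpha} gives $\sum_{\gamma'\in T_r}i(\gamma,\gamma')=2$ for every $r$ and every $\gamma$ outside $T_r$. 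For any $\gamma_a,\gamma_b$ in distinct triples $T_r,T_s$ one applies the tool to $T_r$, $\gamma_b$ and a curve $\gamma_c\in T_s\setminus\{\gamma_b\}$, obtaining $i(\gamma_a,\gamma_b)\le 1$; curves in the same triple are disjoint. This settles the first case.

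In the second case no three of $\gamma_7,\dots,\gamma_{12}$ are pairwise disjoint, and $\crn(\gamma_7,\dots,\gamma_{12})=6$ forces equality throughout the proof of Lemma~\ref{lem:nopop} applied to these six curves: their disjointness graph is triangle-free with $\binom{6}{2}-6=9$ edges, hence equals $K_{3,3}$ by the uniqueness part of Tur\'an's theorem, and all six edges of their intersection graph have weight $1$. So $\{\gamma_7,\dots,\gamma_{12}\}$ splits, after relabelling, into $A=\{\gamma_7,\gamma_8,\gamma_9\}$ and $B=\{\gamma_{10},\gamma_{11},\gamma_{12}\}$ whose internal intersection numbers are all $1$, with every curve of $A$ disjoint from every curve of $B$; in particular all pairwise intersection numbers among $\gamma_7,\dots,\gamma_{12}$ are $\le 1$. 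Every remaining pair $\gamma_a,\gamma_b$ has a curve in $\{\gamma_1,\dots,\gamma_6\}$; one takes $T$ to be whichever of $\{\gamma_1,\gamma_2,\gamma_3\}$, $\{\gamma_4,\gamma_5,\gamma_6\}$ contains one of $\gamma_a,\gamma_b$, renamed so it contains $\gamma_a$ and chosen so that $\sum_{\gamma\in T}i(\gamma,\gamma_b)=2$ (always possible, since if one of the two curves lies in $\{\gamma_1,\gamma_2,\gamma_3\}$ take $T=\{\gamma_1,\gamma_2,\gamma_3\}$, and otherwise the other curve lies in $\{\gamma_7,\dots,\gamma_{12}\}$), and one picks $\gamma_c$ disjoint from $\gamma_b$: the third curve of $\gamma_b$'s triple if $\gamma_b\in\{\gamma_4,\gamma_5,\gamma_6\}$, or a curve from the opposite of $A,B$ if $\gamma_b\in\{\gamma_7,\dots,\gamma_{12}\}$. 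Such $\gamma_c$ lies in $\{\gamma_4,\dots,\gamma_{12}\}$ resp.\ $\{\gamma_7,\dots,\gamma_{12}\}$, hence also satisfies $\sum_{\gamma\in T}i(\gamma,\gamma_c)=2$, so the tool yields $i(\gamma_a,\gamma_b)\le 1$; together with the vanishing of intersection numbers within $\{\gamma_1,\gamma_2,\gamma_3\}$ and within $\{\gamma_4,\gamma_5,\gamma_6\}$, all pairs are covered.

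The step I expect to be the main obstacle is the second case: there Lemma~\ref{lem:inr1} cannot directly bound the pairwise intersection numbers of $\gamma_7,\dots,\gamma_{12}$, since these curves need not lie in a pairwise disjoint triple together with further curves. Getting around this relies on the uniqueness statement in Tur\'an's theorem, which forces the intersection graph of those six curves to be two disjoint triangles with all edge weights $1$ --- simultaneously bounding those intersection numbers and, for each of the six curves, exhibiting a disjoint partner to play the role of $\gamma_c$ in the tool above.
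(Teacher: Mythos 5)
Your proposal is correct and follows essentially the same route as the paper: extract the non-separating property and the equalities $\sum i = 2$ from the proof of Proposition~\ref{prop:genus2_12}, split into the two cases according to whether $\gamma_7,\dots,\gamma_{12}$ contain a disjoint triple, use the uniqueness in Tur\'an's theorem to identify the intersection graph as two weight-one triangles in the second case, and finish every remaining pair with Lemma~\ref{lem:inr1}\ref{lem:inr1_item1} (your ``tool'' is exactly how the paper's Lemma~\ref{lem:inr1}\ref{lem:inr1_item2} is deduced from \ref{lem:inr1_item1}). The only cosmetic difference is that you re-derive the equality chain via a single four-block decomposition rather than the paper's nested inequalities \eqref{eq:12curves_1st_pop}--\eqref{eq:12curves_3rd_pop}, which changes nothing of substance.
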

\begin{proof}
    Let $\gammak{12}$ be a minimal system, i.e.\ $\ngamma{12}=36$, and set $A=\{\gamma_1, \gamma_2, \gamma_3\}$, $B=\{\gamma_4, \gamma_5, \gamma_6\}$, $C=\{\gamma_7, \gamma_8, \gamma_9\}$, $D=\{\gamma_{10}, \gamma_{11}, \gamma_{12}\}$.    
    It follows from the proof of Proposition~\ref{prop:genus2_12} that all curves must be non-separating. Further, we may assume that $A$ and $B$ are two triples of pairwise disjoint curves. Since $\ngamma{12}=36$, we get equality in all the inequalities in the proof of Proposition~\ref{prop:genus2_12}. Equality in \eqref{eq:12curves_1st_pop} yields
    \begin{align*}
        i(\gamma_1, \gamma_i)+i(\gamma_2, \gamma_i)+i(\gamma_3, \gamma_i) = 2
    \end{align*}
    for $i=4, \dots, 12$, and equality in \eqref{eq:12curves_2nd_pop} gives
    \begin{align*}
        i(\gamma_4, \gamma_i)+i(\gamma_5, \gamma_i)+i(\gamma_6, \gamma_i) = 2
    \end{align*}
    for $i=7, \dots, 12$. Equality in \eqref{eq:12curves_3rd_pop} means
    \[\crn(\gamma_7, \dots, \gamma_{12}) = 6,\]
    which is possible in one of two cases; we consider them separately. 
    
    In the first case, the above equality corresponds to equalities in \eqref{eq:12curves_6_pop}, where we may assume that $C$ is a third triple of pairwise disjoint curves. These equalities imply that 
    \begin{align*}
        i(\gamma_7, \gamma_i)+i(\gamma_8, \gamma_i)+i(\gamma_9, \gamma_i) = 2
    \end{align*}
    for $i=10,11,12$, and that $\crn(\gamma_{10}, \gamma_{11}, \gamma_{12})=0$, i.e.\ also the curves in $D$ are pairwise disjoint. This means that any two of $A, B, C, D$ satisfy the conditions of Lemma~\ref{lem:inr1} \ref{lem:inr1_item2}, it therefore follows that all intersection numbers are at most~1.

    In the second case, the above equality corresponds to equality in \eqref{eq:12curves_6_nopop}, in which case there are no three pairwise disjoint curves among $\gamma_7, \dots, \gamma_{12}$. Now observe that we applied Lemma~\ref{lem:nopop} to obtain this inequality, which we proved by considering intersection graphs and applying Turán's theorem. Returning to the proof of that lemma, one sees that we get equality if and only if we have equality in \eqref{eq:nrofedges} and \eqref{eq:turanstheorem}. The first one implies $i(\gamma_i, \gamma_j)\leq1$ for $7\leq i,j \leq 12$, i.e.\ all pairwise intersection numbers of curves in $C\cup D$ are at most~1. The second one yields, by Turán's theorem (Theorem 12.3 in \cite{bondy2008graph}), that the intersection graph is isomorphic to the following graph:
    \begin{center}
    \begin{tikzpicture}[main/.style = {draw, circle}, state/.style={circle, draw, minimum size=0.75cm}] 
    \node[state] (1) {$8$}; 
    \node[state] (2) [above right of=1] {$7$};
    \node[state] (3) [below right of=1] {$9$}; 
    \node[state] (4) [right of=2] {$12$};
    \node[state] (5) [below right of=4] {$11$};
    \node[state] (6) [below left of=5] {$10$};
    \draw (1) -- (2);
    \draw (2) -- (3);
    \draw (1) -- (3);
    \draw (4) -- (5);
    \draw (5) -- (6);
    \draw (4) -- (6);
    \end{tikzpicture}
    \end{center}
    Thus, we may assume that $C$ and $D$ each consist of curves intersecting pairwise exactly once, and all other intersection numbers are zero. Applying Lemma~\ref{lem:inr1} \ref{lem:inr1_item2} to $A$ and $B$, yields that all pairwise intersection numbers of curves in $A\cup B$ are at most~1. Finally, applying Lemma~\ref{lem:inr1} \ref{lem:inr1_item1} to $A$ and any choice of $\gamma_i \in C$ and $\gamma_j \in D$ gives that all pairwise intersection numbers of curves in $A \cup C \cup D$ are at most~1. Analogously we get the same result for $B \cup C \cup D$.
\end{proof}

\begin{cor} \label{cor:uniqueness12}
    In $\Sigma_2$, minimal systems of 12 curves are unique up to homeomorphisms of the surface and isotopies of curves. 
\end{cor}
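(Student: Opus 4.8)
The plan is to reduce the statement to the classification of maximal $1$-systems in $\Sigma_2$. By the preceding corollary, every minimal system of $12$ curves is a maximal $1$-system; by Malestein, Rivin and Theran \cite{Malestein_2013}, there are exactly two orbits of maximal $1$-systems under the mapping class group of $\Sigma_2$. For a $1$-system the crossing number is simply the number of pairs of curves that intersect, so it is invariant under homeomorphisms of the surface and isotopies of curves, and is therefore constant on each of these two orbits. Consequently, it is enough to show that the two orbits realise different crossing numbers: the system of Figure~\ref{fg:genus2_12curves} realises $\crn(12)=36$ and lies in one orbit, say $\mathcal{O}_1$, and as soon as one knows that the other orbit $\mathcal{O}_2$ has crossing number strictly larger than $36$, no curve system in $\mathcal{O}_2$ can be minimal. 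Hence every minimal $12$-system lies in $\mathcal{O}_1$, and two systems in the same mapping class group orbit are by definition related by a homeomorphism of $\Sigma_2$ together with isotopies of curves.

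First I would extract from the proof of the preceding corollary the precise combinatorial shape of a minimal $12$-system: the curves split into four triples $A,B,C,D$ of pairwise disjoint non-separating curves, each curve of one triple meets exactly two curves of any other triple (each in a single point), and within each triple the curves are either pairwise disjoint or pairwise meet once; this leaves only two possible intersection patterns, each with $36$ edges. Since the weighted intersection graph is a homeomorphism invariant, each pattern is realised by at most one orbit, so combined with the Malestein--Rivin--Theran count of exactly two orbits of maximal $1$-systems and the fact that Figure~\ref{fg:genus2_12curves} realises one of them, at most one of the two patterns occurs in a minimal system. Fixing that pattern, I would then reconstruct the surface much as in Proposition~\ref{prop:uniqueness11}, but starting from a triple of pairwise disjoint non-separating curves rather than a separating curve: such a triple is a pants decomposition of $\Sigma_2$ of a single homeomorphism type (the one with no separating curve), and the remaining nine curves sit in their forced positions relative to it, so that the change-of-coordinates principle produces a homeomorphism carrying any minimal system to the one in Figure~\ref{fg:genus2_12curves}.

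The crux — and the only genuinely new point beyond bookkeeping — is separating the two orbits, equivalently ruling out the second combinatorial pattern for a minimal system. I see two natural routes. One is to take the explicit description of the second Malestein--Rivin--Theran orbit and count its intersecting pairs, verifying that this number exceeds $36$, so that $\mathcal{O}_2$ plays no role. The other is to argue topologically, in the spirit of Lemma~\ref{lem:inr1}, that the pattern in which two of the triples $C,D$ consist of curves meeting pairwise once cannot coexist in $\Sigma_2$ with the disjointness and prescribed intersection-sum conditions imposed by the other triples; this would show directly that the only combinatorial type occurring in a minimal system is the one of Figure~\ref{fg:genus2_12curves}, after which the reconstruction of the surface from a maximal disjoint triple, as above, completes the proof.
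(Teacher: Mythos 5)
Your first route is exactly the paper's proof: invoke the Malestein--Rivin--Theran classification of maximal $1$-systems into two mapping class orbits, observe that the crossing number is constant on each orbit, and check from their explicit description that the two orbits have crossing numbers $36$ and $38$, so every minimal system lies in the orbit of Figure~\ref{fg:genus2_12curves}. Your middle paragraph on reconstructing the surface from a pants decomposition is unnecessary once the orbit count is invoked (membership in one orbit already gives the homeomorphism), but it does no harm.
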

\begin{proof}
    Theorem 1.3 in \cite{Malestein_2013} states that for a genus 2 surface, there are two mapping class orbits of maximal 1-systems. In the proof, the systems are described as isomorphism classes of planar graphs on 6 vertices. It arises from this description that the two maximal 1-systems have crossing numbers 36 and 38, respectively. Since minimal systems of 12 curves are 1-systems with crossing number 36, it thus follows that they are unique up to homeomorphisms of $\Sigma_2$ and isotopies of curves.
\end{proof}

\begin{rem}
With its 12 systoles, the Bolza surface realises the maximal number of systoles among hyperbolic surfaces of genus 2.
The description of the Bolza surface in Chapter 5 of \cite{schmutz1993riemann} implies that the system of its systoles has crossing number 36. Thus, the system of systoles is a minimal system of 12 curves and is therefore equivalent to the one given in Figure~\ref{fg:genus2_12curves}.
 \end{rem}


\section{Higher genus} \label{section:higher genus}

In the last two chapters, we have seen that in surfaces of genus 1 and 2, small minimal systems are unique up to homeomorphisms of the surface and isotopies of curves. The only exceptions are systems of at most three curves in a genus 2 surface, where uniqueness is lost due to inequivalent pair of pants decompositions of the surface. We now briefly look at minimal systems on surfaces of genus $g\geq3$. We will see that uniqueness is lost in its strongest sense, preserved in a weakened form in some cases and completely lost in other cases.

It is clear that realisations of $\crn(k;g)$ for $k\leq 3g-3$ are not unique, due to the existence of inequivalent decompositions of $\Sigma_g$ into pairs of pants. We therefore consider $k\geq 3g-2$.

Recall from Proposition~\ref{prop:smallkgeneralg} that $\crn(3g-3+k;g)=k$ for $1 \leq k \leq g$. In the proof of that proposition, we have explored what this means for the intersection pattern of optimal systems: Any curve $\gamma_i$ in a minimal system $\gammak{3g-3+k}$ satisfies
\[\sum_{j=1}^{3g-3+k} i(\gamma_i, \gamma_j) \leq \crn(3g-3+k;g) - \crn(3g-3+k-1; g) \leq 1,\]
which means that $\gammak{3g-3+k}$ contains $k$ disjoint pairs of curves intersecting exactly once, while the remaining $3g-3-k$ curves are disjoint from all other curves. These $3g-3-k$ remaining curves are thus contained in the complement of the $k$ pairs of curves in $\Sigma_g$, which is homeomorphic to a surface of genus $g-k$ with $k$ removed discs. It is clear that $k$ of those curves must be homotopic to the boundaries of the removed discs, else the system would not be minimal. The other $3g-3-2k$ curves define a pair of pants decomposition of the complement of the $k$ pairs of curves in $\Sigma_g$. With the exception of $g=k=3,4,5$, there are inequivalent choices of these curves. Thus, besides these exceptions,  realisations of $\crn(3g-3+k;g)$ are not unique up to homeomorphisms of $\Sigma_g$ and isotopies of curves. However, they are unique up to the choice of these remaining $3g-3-k$ curves. 

Finally, recall from Proposition~\ref{prop:smallkgeneralg} that $\crn(4g-3+k;g)=g+2k$ for $1\leq k \leq g$. Looking at the minimal systems given in the proof of that proposition (see Figure~\ref{fg:3g-3+k}), again it is clear that for $g \geq 6$, there are topologically inequivalent choices of curves $\alpha_1, \dots, \alpha_{2g-3}$. One may think that those realisations of ${\crn(4g-3+k;g)}$ are unique up to the choice of the curves $\alpha_1, \dots, \alpha_{2g-3}$. We will illustrate on the example of a genus 4 surface that this is not the case. In Figure~\ref{fg:lack_of_uniqueness}, there are two systems of 14 curves with 6 intersection points. By Proposition~\ref{prop:smallkgeneralg}, $\crn(14;4)=6$, those systems are therefore minimal. The two systems coincide with the exception of one curve, they are only distinguished by the green curve. In the system on the left-hand side, the green curve is non-separating, while it is separating in the system on the right-hand side. These systems are thus different in any regard. Adding curves to those systems gives distinct minimal systems of $k\leq 17$ curves.
The same construction yields inequivalent minimal systems of $k$ curves for $g \geq 4$ and $4g-2 \leq k \leq 5g-3$.

\begin{figure}[ht] \centering 
        \includegraphics[width=0.8\textwidth]{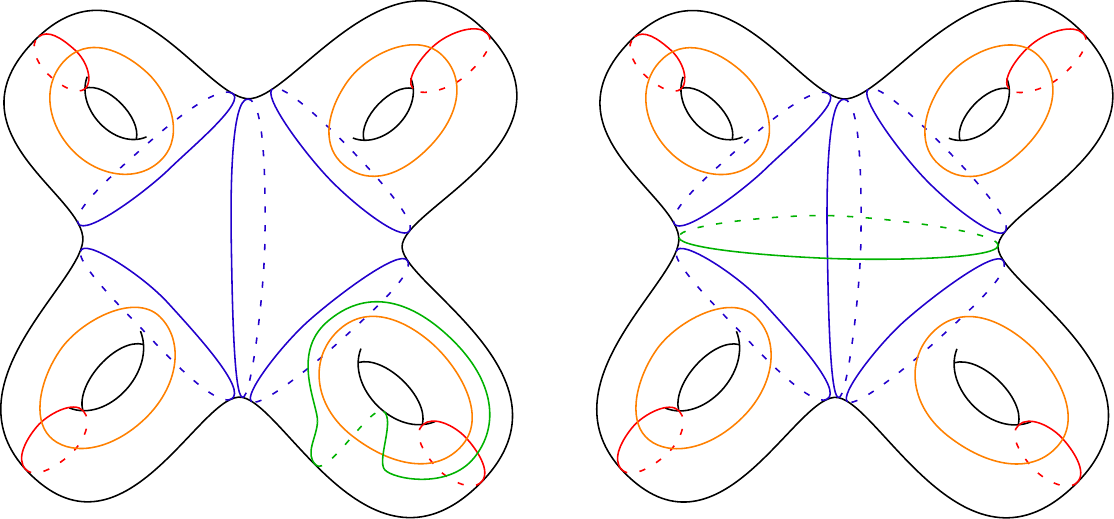}
        \caption{Two different minimal systems of 14 curves in a surface of genus 4}
        \label{fg:lack_of_uniqueness}
\end{figure}

\section{Order of growth} \label{section:order of growth}

We conclude by considering the behaviour of $\crn(k;g)$ for large $k$, giving a lower bound of order $k^2$ and an upper bound of order $k^{2+\frac{1}{3g-3}}$. The lower bound is a straightforward consequence of the fact that any subsystem of $3g-2$ curves has crossing number at least 1. The upper bound is obtained by choosing a hyperbolic metric on the surface and considering the crossing number of all simple closed geodesics of length not exceeding $L$.

\begin{prop}
    For $g \geq 2$, there exists a constant $c_g>0$ such that for any $k \geq 3g-2$:
    \[\crn(k; g) \geq c_g k^2.\]
\end{prop}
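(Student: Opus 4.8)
The plan is to leverage the fundamental counting argument of Corollary~\ref{cor:count}, applied to the smallest window $m = 3g-2$, for which we already know $\crn(3g-2;g) \geq 1$ by Proposition~\ref{prop:smallkgeneralg}\ref{smallk_item2} (taking $k=1$ there gives $\crn(3g-2;g) = 1$). First I would invoke Corollary~\ref{cor:count} with $m = 3g-2$ and $k \geq 3g-2$ arbitrary to obtain
\[
\crn(k;g) \;\geq\; \left\lceil \frac{k(k-1)}{(3g-2)(3g-3)} \, \crn(3g-2;g) \right\rceil \;\geq\; \frac{k(k-1)}{(3g-2)(3g-3)}.
\]
Then I would simply absorb the $k(k-1)$ versus $k^2$ discrepancy into the constant: since $k \geq 3g-2 \geq 4$, we have $k - 1 \geq \tfrac{3}{4}k$, so $k(k-1) \geq \tfrac34 k^2$, and one may take
\[
c_g \;=\; \frac{3}{4(3g-2)(3g-3)}.
\]

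The verification that this $c_g$ works is then immediate from the chain of inequalities above, and $c_g > 0$ for all $g \geq 2$ since $3g-3 \geq 3 > 0$. I would state the argument in two or three lines: cite Corollary~\ref{cor:count}, cite the value $\crn(3g-2;g)=1$ from Proposition~\ref{prop:smallkgeneralg}, and conclude.

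There is essentially no obstacle here; the only mild subtlety is the book-keeping to pass from $k(k-1)$ to $k^2$, which is why the constant carries the factor $\tfrac34$ (or any constant less than $1$ that accounts for the ratio $(k-1)/k$ on the range $k \geq 3g-2$). One could alternatively use a larger window $m$ where a sharper bound on $\crn(m;g)$ is known — for instance $m = 5g-3$ with $\crn(5g-3;g) = 3g$ from Proposition~\ref{prop:smallkgeneralg}\ref{smallk_item3} — to optimise the constant $c_g$, but this is not needed for the order-of-growth statement, so I would not pursue it in the proof itself beyond perhaps a remark.
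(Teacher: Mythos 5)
Your proposal is correct and follows essentially the same route as the paper: invoke Corollary~\ref{cor:count} with $m=3g-2$, use $\crn(3g-2;g)=1$ from Proposition~\ref{prop:smallkgeneralg}, and absorb the $k(k-1)$ versus $k^2$ discrepancy into the constant (the paper simply takes $c_g = \frac{1}{2(3g-2)(3g-3)}$ via $k-1\geq \frac{k}{2}$). The only difference is your slightly sharper bookkeeping constant, which is immaterial.
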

\begin{proof}
    Use Corollary~\ref{cor:count} with $m=3g-2$ and the fact that, by Proposition~\ref{prop:smallkgeneralg}, $\crn(3g-2;g)=1$: 
     \begin{align*}
        \crn(k; g) &\geq \frac{k(k-1)}{(3g-2)(3g-3)} \crn(3g-2;g)\\
        &= \frac{k(k-1)}{(3g-2)(3g-3)} \\
        &\geq c_g k^2,
    \end{align*}
    where $c_g = \frac{1}{2(3g-2)(3g-3)}$.
\end{proof}

\begin{rem}
    A lower bound for $\crn(k;g)$ of order $k^{2+\frac{1}{6g-6}}$ is obtained by Hubard and Parlier. Their article containing this result is currently in preparation \cite{parlier2024}.
\end{rem}

\begin{prop}
    For $g \geq 2$, there exist constants $k_g, c_g > 0$ such that for any $k\geq k_g$:
    \[\crn(k;g) \leq c_g k^{2+\frac{1}{3g-3}}.\]  
\end{prop}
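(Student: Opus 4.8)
The plan is to realise this upper bound by a geometrically natural family of curves. Fix once and for all a hyperbolic metric on $\Sigma_g$; write $X$ for the resulting hyperbolic surface and $\operatorname{sys}(X)$ for its systole. For a length parameter $L$, consider the system consisting of \emph{all} simple closed geodesics of length at most $L$. Two estimates are needed: a lower bound on the number of such geodesics (so that $k$ of them can be found with $L$ not too large), and an upper bound on the number of intersection points of two geodesics of length $\leq L$ (to control the total crossing number). Balancing $L$ against $k$ will then produce exactly the exponent $2+\frac{1}{3g-3}$, the arithmetical point being that $6g-6 = 2(3g-3)$.

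For the counting estimate I would invoke Mirzakhani's theorem: the number $N_X(L)$ of isotopy classes of essential simple closed curves on $X$ with geodesic length at most $L$ satisfies $N_X(L) \sim c(X)\,L^{6g-6}$ as $L\to\infty$. Only the lower bound is used, namely that there are constants $c_1>0$ and $L_0$ with $N_X(L) \geq c_1 L^{6g-6}$ for all $L\geq L_0$; if one prefers to keep the argument elementary, a lower bound of the correct order can instead be obtained by counting the simple closed curves produced from a fixed pants decomposition by bounded Fenchel--Nielsen twisting, whose geodesic lengths grow at most linearly in the twist parameters.

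For the intersection estimate I claim there is a constant $c_2 = c_2(X)$ such that any two distinct simple closed geodesics $\gamma,\delta$ of length at most $L$ satisfy $i(\gamma,\delta) \leq c_2 L^2$. Set $\varepsilon = \tfrac12\operatorname{sys}(X)$ and subdivide each of $\gamma,\delta$ into fewer than $2L/\varepsilon + 2$ geodesic subarcs of length $<\varepsilon$; it then suffices to see that a subarc $a$ of $\gamma$ and a subarc $b$ of $\delta$ meet in at most one point. If they met in two points $p\neq q$, lift to $\mathbb{H}^2$ choosing lifts $\tilde a,\tilde b$ through a common lift $\tilde p$ of $p$: the two lifts of $q$ lying on $\tilde a$ and on $\tilde b$ are then at distance $<2\varepsilon = \operatorname{sys}(X)$, so, since a nontrivial deck transformation displaces every point of $\mathbb{H}^2$ by at least $\operatorname{sys}(X)$, they coincide; hence $\tilde a$ and $\tilde b$ lie on a single geodesic of $\mathbb{H}^2$, forcing $\gamma=\delta$, a contradiction. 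Thus $i(\gamma,\delta) \leq \#(\gamma\cap\delta) < (2L/\varepsilon + 2)^2 \leq c_2 L^2$ for $L$ large, and crucially $c_2$ depends only on $X$, not on $k$ or on the chosen geodesics.

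To conclude, given $k\geq k_g$ put $L = (k/c_1)^{1/(6g-6)}$, so that $N_X(L)\geq k$, and pick any $k$ simple closed geodesics $\gamma_1,\dots,\gamma_k$ of length at most $L$; these are non-contractible, pairwise non-homotopic, simple and closed, hence an admissible system. Then
\[
\crn(k;g) \leq \crn(\gamma_1,\dots,\gamma_k) = \sum_{1\leq i<j\leq k} i(\gamma_i,\gamma_j) \leq \binom{k}{2} c_2 L^2 \leq c_2\, k^2 \bigl(k/c_1\bigr)^{\frac{2}{6g-6}} = c_g\, k^{\,2+\frac{1}{3g-3}},
\]
with $c_g = c_2\, c_1^{-1/(3g-3)}$ and $k_g$ chosen large enough that all of the asymptotic estimates above are in force. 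The only genuinely nontrivial input is the lower bound $N_X(L) \gtrsim L^{6g-6}$ on the number of short simple closed geodesics, so that is where the main work (or the main citation) lies; the intersection bound and the optimisation over $L$ are routine, and one need only take care that $c_1$ and $c_2$ depend on the fixed metric $X$ alone, hence ultimately on $g$ alone.
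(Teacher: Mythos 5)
Your proposal is correct and follows the same overall strategy as the paper: take all simple closed geodesics of length at most $L$ on a fixed hyperbolic metric, count them via Mirzakhani's asymptotic $\#\mathcal{S}(L)\sim c\,L^{6g-6}$, bound pairwise intersections by a constant times $L^2$, and balance $L$ against $k$ to get the exponent $2+\frac{1}{3g-3}$. There are two points where your execution differs, both legitimately and arguably for the better. First, where the paper quotes the inequality $i(\gamma,\delta)\leq b_g\,\ell(\gamma)\ell(\delta)$ from Fathi--Laudenbach--Po\'enaru, you prove the needed bound $i(\gamma,\delta)\leq c_2 L^2$ from scratch by cutting each geodesic into arcs of length less than half the systole and showing via a lift to $\mathbb{H}^2$ that two such arcs from distinct geodesics meet at most once; this argument is standard and correct (the key facts---that a nontrivial deck transformation displaces every point by at least $\operatorname{sys}(X)$ and that two points of $\mathbb{H}^2$ determine a unique geodesic---are exactly what is needed), making your proof self-contained at the cost of a slightly worse constant. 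Second, the paper arranges for the equality $k=\#\mathcal{S}(L)$ by choosing a metric whose simple length spectrum has no multiplicities (citing McShane), whereas you simply choose $L$ with $\#\mathcal{S}(L)\geq k$ and select any $k$ of the resulting geodesics; this removes the need for the simple-length-spectrum hypothesis entirely and is a small simplification. All remaining checks (the selected geodesics are essential and pairwise non-homotopic, the constants depend only on the fixed metric and hence on $g$) are in order.
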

\begin{proof}
    Choose a hyperbolic metric on $\Sigma_g$ with a simple simple length spectrum, that is, a metric such that all lengths of simple closed geodesics occur with multiplicity 1, see \cite{McShane_2008} for the existence of such a metric. For $L>0$, let $\mathcal{S}(L)$ denote the set of simple closed geodesics of length at most $L$. The following limit is known to exist due to Mirzakhani \cite{mirzakhani2008growth}:
    \[\lim_{L \to \infty} \frac{\#S(L)}{L^{6g-6}} =c > 0.\]
    In particular, there exist positive constants $a_g$ and $L_g$ such that for $L \geq L_g$,
    \[\frac{1}{a_g} L^{6g-6} \leq \# \mathcal{S}(L) \leq a_g L^{6g-6},\]
    implying that
    \begin{equation}
        L^2 \leq (a_g \#\mathcal{S}(L))^{\frac{1}{3g-3}}. \label{eq:L2}
    \end{equation}
    Denoting by $\ell(\gamma)$ the length of a curve $\gamma$, there is a constant $b_g>0$ such that for any two simple closed curves $\gamma$ and $\delta$,
    \begin{equation*}
        i(\gamma, \delta) \leq b_g \ell(\gamma)\ell(\delta)
    \end{equation*}
    (Lemma 4.2 in \cite{fathi2012thurston}). For $\gamma, \delta \in \mathcal{S}(L)$, this implies
    \begin{equation}
        i(\gamma, \delta) \leq b_g L^2. \label{eq:torkaman}
    \end{equation}
    Define $k_g=\# \mathcal{S}(L_g)$ and let $k\geq k_g$. Since the simple length spectrum is simple, there exists $L \geq L_g$ such that $k = \#\mathcal{S}(L)$. Thus, we get:
    \begin{align*}
        \crn(k;g) &\leq  \crn(\mathcal{S}(L)) \\
        &\stackrel{\mathclap{\eqref{eq:torkaman}}}{\leq}  \binom{k}{2} b_g L^2 \\
        &\leq  \frac{1}{2} b_g k^2 L^2 \\
        &\stackrel{\mathclap{\eqref{eq:L2}}}{\leq}  \frac{1}{2} b_g a_g^{\frac{1}{3g-3}} k^{2+\frac{1}{3g-3}}\\
        &=  c_g k^{2+\frac{1}{3g-3}},
    \end{align*}
    where $c_g = \frac{1}{2} b_g a_g^{\frac{1}{3g-3}}$.
\end{proof}

\section{A brief outlook} \label{section:outlook}

The system of 12 curves considered in Section~\ref{section:genus2} is optimal in three ways: It minimises the crossing number of 12 curves, it is a maximal 1-system, and it is equivalent to the maximal system of systoles in the Bolza surface. It would be worth further investigation, whether these optimal systems are related in general.

\begin{ques}
    In a hyperbolic surface with the maximal number of systoles, is the system of systoles minimal in terms of the crossing number?
\end{ques}

\begin{ques}
    In a surface of genus $g$, does there exist a maximal 1-system that minimises the crossing number?
\end{ques}

More generally, we have considered the set $\mathcal{S}(L)$ of all geodesics of length at most $L$ in Section~\ref{section:order of growth}, which provided an upper bound for $\crn(k;g)$ of order $k^{2+\frac{1}{3g-3}}$. This leads to the following question.

\begin{ques}
    In a hyperbolic surface of genus $g$, is the crossing number of the system $\mathcal{S}(L)$ of all geodesics of length at most $L$ asymptotically proportional to $\# \mathcal{S}(L)^{2+\frac{1}{3g-3}}$? 
    Explicitly, does the following statement hold:
    \[\lim_{L \to \infty} \frac{\log( \crn(\mathcal{S}(L)))}{\log(\# \mathcal{S}(L))} = 2+\frac{1}{3g-3} \ ?\]
\end{ques}

During the process of writing this article, Hubard and Parlier obtained a lower bound for $\crn(k;g)$ of order $k^{2+\frac{1}{6g-6}}$; their article containing this result is in preparation \cite{parlier2024}. The precise order of growth remains open. 

\newpage
 \bibliographystyle{alpha}
 \bibliography{bibliography}
 
\end{document}